\theoremstyle{plain}
\newtheorem{thm}{Theorem}[section]
\newtheorem{lemm}[thm]{Lemma}
\newtheorem{prop}[thm]{Proposition}
\theoremstyle{definition}
\newtheorem{df}[thm]{Definition}
\newtheorem{rem}[thm]{Remark}
\renewcommand{\div}{\operatorname{div}}
\newcommand{\dB}{\dot{B}}
\newcommand{\hf}{\widehat{f}}
\renewcommand{\leq}{\leqslant}
\newcommand{\tC}{\widetilde{C}}
\newcommand\defref[1]{Definition~\ref{#1}}
\newcommand\thmref[1]{Theorem~\ref{#1}}
\newcommand\propref[1]{Proposition~\ref{#1}}
\newcommand\lemref[1]{Lemma~\ref{#1}}
\newcommand{\n}[1]{{\left\|#1\right\|}}
\newcommand{\Mp}[1]{\left\{#1\right\}}
\renewcommand{\sp}[1]{\left(#1\right)}
\newcommand{\fB}{\widehat{\dot{B}}{}}
\renewcommand{\leq}{\leqslant}
\renewcommand{\geq}{\geqslant}
\begin{document}
\title[Compressible rotating Navier--Stokes--Korteweg system]
{Global strong solutions to the $3$D rotating compressible Navier--Stokes--Korteweg system for large data in the critical $\widehat{L^p}$ framework}
	\author[M.~Fujii]{Mikihiro Fujii}
	\address[M.~Fujii]{Graduate School of Science, Nagoya City University, Nagoya, 467-8501, Japan}
	\email[M.~Fujii]{fujii.mikihiro@nsc.nagoya-cu.ac.jp}
	\author[S.~Zhang]{Shunhang Zhang}
	\address[S.~Zhang]{School of Mathematics and Computational Science, Wuyi University, Jiangmen, 529020, Guangdong, China}
	\email[S.~Zhang]{zhangshunhang@wyu.edu.cn}
\keywords{Navier--Stokes--Korteweg; Coriolis force; Global solution; Large initial data; Critical Fourier--Besov spaces}
\subjclass[2020]{35A01; 35Q35; 76N10; 76U05}
\begin{abstract}
Let us consider the $3$D compressible Navier--Stokes--Korteweg system in the rotational framework. 
Although there is a wealth of literature on the weak solutions to this system,
there seem to be no results on the strong solutions.
In this paper, we show the unique existence of global solutions for {\it large} initial data in the critical Besov-type spaces based on the Fourier--Lebesgue spaces $\widehat{L^p}(\mathbb{R}^3)$ with $2 \leq p < 3$, provided that the rotation speed and the Mach number are sufficiently large and small, respectively.
The key ingredient of the proof is to establish the Strichartz-type estimates due to the dispersion caused by the mixture of the rotation and acoustic waves in the Fourier--Lebesgue spaces, and focus on the better structure of dissipation from the Korteweg term and the nonlinear terms of the divergence form in the momentum formulation.
\end{abstract}
\maketitle

\section{Introduction}
This paper is concerned with the initial value problem of the compressible Navier--Stokes--Korteweg system with the Coriolis force in the whole space $\mathbb{R}^3$, which is one of the models of geophysical flows used to describe the motion of a compressible fluid with capillary effects under the action of the Coriolis force. Regarding the further physical background, we refer to \cites{MR3292658,MR775366,MR2186374} and references therein.
The governing system is given as follows:
\begin{equation}\label{eq:CNSKC-0}
	\begin{cases}
		\partial_t\rho+\operatorname{div}(\rho u)=0,& t>0, x\in \mathbb{R}^3,\\
		\begin{aligned}
			\partial_t (\rho u) & + \div (\rho u\otimes u) + \Omega ( e_3 \times (\rho u) ) + \dfrac{1}{\varepsilon^2}\nabla P(\rho)\\
			&
			=
			\mu \Delta u + (\mu+\lambda) \nabla \div u
			+ \div \mathcal{K}(\rho),
		\end{aligned}
		& t>0, x\in \mathbb{R}^3,\\
		\rho(0,x)=\rho_0(x),\quad u(0,x)=u_0(x), & x\in \mathbb{R}^3.
	\end{cases}
\end{equation}
Here, since \eqref{eq:CNSKC-0} is physically well{-}justified at mid-latitude regions, 
the centrifugal force is canceled by the natural assumption that the centrifugal force balances with the geostrophic force.
The unknowns $\rho=\rho(t,x)\in (0,\infty)$ and $u=u(t,x)\in \mathbb{R}^3$ with $(t,x)\in [0,\infty)\times \mathbb{R}^3$ stand for the fluid density and the velocity field, respectively, while  $\rho_0=\rho_0(x)\in (0,\infty)$ and $u_0=u_0(x)\in \mathbb{R}^3$ with $x\in\mathbb{R}^3$ are the supplemented initial data. 
The constants $\mu$ and $\lambda$ are shear and bulk viscosity coefficients, respectively, and satisfy $\mu>0$ and $\nu:=2\mu+\lambda>0$. The positive constant $\varepsilon$ is called the Mach number, which represents the reciprocal of the sound speed. 
One of the key terms in \eqref{eq:CNSKC-0} is the Coriolis force $\Omega(e_3\times(\rho u))$, which represents the rotation of the fluid around the vertical axis $e_3=(0,0,1)^\top$ with the constant angular speed $\Omega \in \mathbb{R}$. 
The Korteweg tensor $\mathcal{K}(\rho)$ is defined by
\begin{equation}
	\begin{split}
		\mathcal{K}(\rho):=\frac{\kappa}{2}(\Delta\rho^2-|\nabla \rho|^2)\operatorname{Id}-\kappa\nabla\rho\otimes\nabla\rho,
	\end{split}
\end{equation}where the constant $\kappa>0$ is the capillary coefficient. Hence, one may observe that the capillary term $\div \mathcal{K}(\rho)$ is expressed by\begin{equation}
\begin{split}
	\div \mathcal{K}(\rho)=\kappa\rho\nabla\Delta \rho.
\end{split}
\end{equation}
The pressure $P=P(\rho):(0,\infty) \to \mathbb{R}$ is a given function real analytic at $\rho=\rho_{\infty}$ and satisfying
\begin{align}\label{monotone-P}
    P^\prime(\rho_{\infty})>0
\end{align}
with some positive constant $\rho_{\infty}>0$. 
This paper treats the system \eqref{eq:CNSKC-0} as the momentum formulation.
More precisely, we define the momentum $m:=\rho u$ and then $(\rho,m)$ should solve the following system:
\begin{equation}\label{eq:CNSKC-1}
	\begin{cases}
		\partial_t\rho+\operatorname{div}m=0,& t>0, x\in \mathbb{R}^3,\\
		\begin{aligned}
			\partial_t m & + \div \sp{ \dfrac{m \otimes m}{\rho} } + \Omega ( e_3 \times m ) + \dfrac{1}{\varepsilon^2}\nabla P(\rho)\\
			&
			=
			\mu \Delta \sp{\dfrac{m}{\rho}} + (\mu+\lambda) \nabla \div \sp{\dfrac{m}{\rho}}
			+ \div\mathcal{K}(\rho),
		\end{aligned}
		& t>0, x\in \mathbb{R}^3,\\
		\rho(0,x)=\rho_0(x),\quad m(0,x)=m_0(x), & x\in \mathbb{R}^3.
	\end{cases}
\end{equation}
The clear difference from the velocity formulation is that now the nonlinear terms are of the divergence form, which plays a key role in our calculus. 
It is well-known that the system \eqref{eq:CNSKC-1} has an invariant scaling structure; 
if $(\rho,m)$ is a solution to \eqref{eq:CNSKC-1}, then the scaled functions
\begin{align}\label{scaling} 
\rho_{\lambda}(t,x):= \rho (\lambda^2 t,\lambda x),
\quad 
m_{\lambda}(t,x):=\lambda m (\lambda^2 t,\lambda x)
\end{align}
also solve \eqref{eq:CNSKC-1} with the pressure $P$ replaced by $\lambda^2 P$ for all $\lambda>0$. 
A norm space $X$ of distributions on $\mathbb{R}^3$ is called a critical space with respect to the scaling \eqref{scaling} if
$\n{\sp{\rho_{\lambda},m_{\lambda}}(0,\cdot)}_{X}
=
\n{\sp{\rho,m}(0,\cdot)}_{X}$
for all $\lambda>0$.
For instance, $\dB_{p,q}^{\frac{3}{p}}(\mathbb{R}^3) \times \dB_{p,q}^{\frac{3}{p}-1}(\mathbb{R}^3)$ and $\fB_{p,q}^{\frac{3}{p}}(\mathbb{R}^3) \times \fB_{p,q}^{\frac{3}{p}-1}(\mathbb{R}^3)$ with $1 \leq p,q \leq  \infty$
are the corresponding critical Besov spaces and critical Fourier--Besov spaces\footnote{See Section \ref{sec:pre} for the precise definition.}, respectively.

The aim of this paper is to consider the solutions to \eqref{eq:CNSKC-1} around the constant equilibrium state $(\rho_{\infty},0)$, where $\rho_{\infty}$ is the positive constant appearing in \eqref{monotone-P}, and prove the global well-posedness of \eqref{eq:CNSKC-1} for arbitrarily large initial perturbation in the critical Fourier--Besov spaces framework $\fB_{p,1}^{\frac{3}{p}}(\mathbb{R}^3) \times \fB_{p,1}^{\frac{3}{p}-1}(\mathbb{R}^3)$ with $2 \leq p < 3$ and some low-frequency assumptions, provided that the $|\Omega|$ and $\varepsilon$ are sufficiently large and small, respectively.

As we consider the solutions around the constant equilibrium state $(\rho_{\infty},0)$,
we further reformulate the system \eqref{eq:CNSKC-1}.
Without loss of generality, by performing a suitable scaling transform, we reduce the study on $\eqref{eq:CNSKC-1}$ to the case $\rho_\infty=P^\prime(\rho_\infty)=\nu=1$; see \cite{MR4865748} for the details on this scaling transformation. 
Let us define the perturbation of the density around the constant $\rho_{\infty}=1$ by
\begin{equation}
	\begin{split}
		a(t,x):=\frac{\rho(t,x)-1}{\varepsilon},
        \qquad
        a_0(x)=\frac{\rho_0(x)-1}{\varepsilon}
	\end{split}
\end{equation}
and assume that the given datum $a_0$ is independent of $\varepsilon$.
Then, we reformulate \eqref{eq:CNSKC-1} as 
\begin{equation}\label{eq:CNSKC-2}
	\begin{cases}
		\partial_t a+\dfrac{1}{\varepsilon} \div m=0,
		& t>0, x\in \mathbb{R}^3,\\\begin{aligned}
			\partial_t m-\mu\Delta m-(\mu&+\lambda)\nabla\div m+\Omega ( e_3 \times m )\\&+\dfrac{1}{\varepsilon} \nabla a-\kappa\varepsilon\nabla\Delta a=N_\varepsilon[a,m],
		\end{aligned}
		& t>0, x\in \mathbb{R}^3,\\
		a(0,x)=a_0(x),\quad m(0,x)=m_0(x), & x\in \mathbb{R}^3,
	\end{cases}
\end{equation}
where
\begin{equation}
	\begin{split}
		N_\varepsilon[a,m]:=&\div\left((I(\varepsilon a)-1)m\otimes m\right)-\mu\Delta(I(\varepsilon a)m)\\
		&-(\mu+\lambda)\nabla\div(I(\varepsilon a)m)-\frac{1}{\varepsilon}{J}(\varepsilon a)\nabla a+\kappa\varepsilon^2\nabla(a\Delta a)\\&+\frac{\kappa\varepsilon^2}{2}\nabla(|\nabla a|^2)-\kappa\varepsilon^2\div(\nabla a\otimes\nabla a)
	\end{split}
\end{equation}with\begin{equation}
	\begin{split}
		I(b):=\frac{b}{1+b},\quad J(b):=P^\prime(1+b)-1.
	\end{split}
\end{equation}
\par 
Before presenting our main theorem, we first give a brief overview of existing results relevant to the present study.  
The pioneering contributions of Danchin \cites{MR1779621,MR1855277} are that he established the well-posedness of the compressible Navier--Stokes system (i.e. $\Omega=\kappa=0$ in \eqref{eq:CNSKC-0}) in the critical $L^2$ framework by exploiting Sobolev embeddings in the endpoint case for Besov spaces.  
Building upon this, Danchin and Desjardins \cite{MR1810272} investigated the compressible Navier--Stokes--Korteweg system (i.e. $\Omega=0$ in \eqref{eq:CNSKC-0}): 
\begin{align}\label{eq:CNSK}
	\begin{cases}
		\partial_t \rho + \div (\rho u) = 0, & t>0,x \in \mathbb{R}^3,\\
		\begin{aligned}
\partial_t (\rho u) + \div (\rho u &\otimes u)+\frac{1}{\varepsilon^2}\nabla P(\rho) \\
&= \mu \Delta u + (\mu+\lambda) \nabla \div u+ \kappa \rho \nabla \Delta \rho, 
		\end{aligned}
		& t>0,x \in \mathbb{R}^3,  \\
		\rho(0,x)=\rho_0(x),\quad
		u(0,x)=u_0(x), & x \in \mathbb{R}^3,
	\end{cases}
\end{align}
and proved that if the initial perturbation $(a_0,u_0)=(\rho_0-1,u_0)$ is sufficiently small in the critical Besov space  $(\dB_{2,1}^{\frac{1}{2}}(\mathbb{R}^3) \cap \dB_{2,1}^{\frac{3}{2}}(\mathbb{R}^3)) \times \dB_{2,1}^{\frac{1}{2}}(\mathbb{R}^3)$,
then \eqref{eq:CNSK} has a unique global solution $(a,u)=(\rho-1,u)$ satisfying  
\[
a,\nabla a, u \in C([0,\infty);\dB_{2,1}^{\frac{1}{2}}(\mathbb{R}^3)) \cap L^1(0,\infty;\dB_{2,1}^{\frac{5}{2}}(\mathbb{R}^3)).
\]
Subsequently, the extended results within the high-frequency $L^p$ setting were established in \cites{MR4312286,MR4340485}, which are comparable to those for the compressible Navier-Stokes system in \cites{MR2679372,MR2823668}.  
It is worth noting that the $L^1$-maximal regularity  
\[
a,\nabla a \in L^1(0,\infty;\dB_{2,1}^{\frac{5}{2}}(\mathbb{R}^2))
\]
for the density perturbation stems from the dissipation generated by the Korteweg term $\kappa \nabla \Delta a$.  
This extra dissipative structure yields stronger regularity properties compared to those for the standard compressible Navier--Stokes equations.  
In fact, \cite{MR4340485} demonstrated that the small global solutions to \eqref{eq:CNSK} enjoy the analytic Gevrey regularity.  
Furthermore, Kobayashi and Nakasato \cite{kobayashi2025time}, working in the critical Fourier--Besov framework, improved the $L^2$-regularity for the low-frequency part in \cite{MR4312286} and proved global well-posedness of the momentum formulation of \eqref{eq:CNSK} for small initial perturbation $(a_0,m_0)$ in  $(\fB_{p,1}^{\frac{3}{p}-1}(\mathbb{R}^3) \cap \fB_{p,1}^{\frac{3}{p}}(\mathbb{R}^3)) \times \fB_{p,1}^{\frac{3}{p}-1}(\mathbb{R}^3)$ with $1 \leq p \leq \infty$.
On the other hand, it is also a significant topic to consider the singular parameter limits for the compressible flows. Concerning the low Mach number limit of the compressible Navier--Stokes system, Danchin \cites{MR1886005,MR3563240} proved that the small global solutions in the critical Besov space converge to the incompressible Navier--Stokes flow as the Mach number $\varepsilon\to 0$. In \cite{MR4721788}, the first author weakened the smallness condition on the initial data in the $2$D case, showing that for arbitrarily large initial perturbation $(a_0,u_0)$ in the critical Besov space $
(\dB_{2,1}^0(\mathbb{R}^2) \cap \dB_{2,1}^1(\mathbb{R}^2)) \times \dB_{2,1}^0(\mathbb{R}^2)$,
the compressible Navier--Stokes system has a unique global solution provided that the Mach number $\varepsilon$ is sufficiently small; moreover, the solution converges to the corresponding large incompressible flow in certain critical space-time norms as $\varepsilon\to 0$.  
This result was recently extended to the $2$D compressible Navier--Stokes--Korteweg system by the first author and Li \cite{MR4836084}, in which they established the unique global solvability in the framework of critical Fourier--Besov spaces  
$
(\fB_{p,1}^{\frac{2}{p}-1}(\mathbb{R}^2) \cap \fB_{p,1}^{\frac{2}{p}}(\mathbb{R}^2))\times \fB_{p,1}^{\frac{2}{p}-1}(\mathbb{R}^2)
$
for $2 \leq p < 4$ and analyzed the incompressible limit in the low-Mach regime.  
For the incompressible limits of $\lambda\to\infty$ type, see \cites{MR3907737,MR3709125,MR3935027,MR4484351}.
\par 
We next turn to results concerning the incompressible Navier--Stokes--Coriolis system (i.e. $\rho\equiv 1$ and $\kappa=0$ in \eqref{eq:CNSKC-0}):
\begin{align}\label{eq:incomp-coriolis}
	\begin{dcases}
		\partial_t u - \mu \Delta u + \Omega( e_3 \times u ) + (u \cdot \nabla)u + \nabla p = 0, & \qquad t > 0, x \in \mathbb{R}^3,\\
		\div u = 0, & \qquad t \geqslant 0, x \in \mathbb{R}^3,\\
		u(0,x) = u_0(x), & \qquad x \in \mathbb{R}^3,
	\end{dcases}
\end{align}
which was first investigated by Babin, Mahalov and Nicolaenko \cites{MR1480996,MR1752139,MR1855663}.  
Subsequently, Iwabuchi and Takada \cites{MR3096523,MR3468733} and Koh, Lee and Takada \cite{MR3229600} analyzed the dispersive properties of the evolution group $\{e^{\pm i\Omega tD_3/|D|}\}_{t\in\mathbb{R}}$ induced by the Coriolis force.  
In particular, they derived Strichartz estimates for the linearized problem, which implies that certain space-time norms of the linear solution can be made arbitrarily small by choosing the rotation speed $|\Omega|$ sufficiently large.  
By exploiting this observation, Iwabuchi and Takada \cite{MR3096523} and Koh, Lee and Takada \cite{MR3229600} proved the global well-posedness of \eqref{eq:incomp-coriolis} for any divergence-free initial data $u_0\in\dot{H}^s(\mathbb{R}^3)$ with $1/2 \leq s < 9/10$, provided $|\Omega|$ is large enough.  
In contrast to incompressible or non-rotating situations, there are comparatively few results addressing the compressible Navier--Stokes--Coriolis system (i.e. $\kappa=0$ in \eqref{eq:CNSKC-0}):
\begin{equation}\label{eq:CNSKC-5}
	\begin{cases}
		\partial_t\rho+\operatorname{div}(\rho u)=0,& t>0, x\in \mathbb{R}^3,\\
		\begin{aligned}
			\partial_t (\rho u) & + \div (\rho u\otimes u) + \Omega ( e_3 \times (\rho u) ) + \dfrac{1}{\varepsilon^2}\nabla P(\rho)\\
			&
			=
			\mu \Delta u + (\mu+\lambda) \nabla \div u,
		\end{aligned}
		& t>0, x\in \mathbb{R}^3,\\
		\rho(0,x)=\rho_0(x),\quad u(0,x)=u_0(x), & x\in \mathbb{R}^3.
	\end{cases}
\end{equation}  
In the context of weak solutions to \eqref{eq:CNSKC-5} in an infinite slab $\mathcal{D}=\mathbb{R}^2\times(0,1)$, see \cites{MR4412078,MR4322370,MR2888285,MR2964771,MR3200090} for results on existence and the fast rotation limit.  
The first analysis of the strong-solution well-posedness for \eqref{eq:CNSKC-5} in $\mathbb{R}^3$ appears in the first author and Watanabe's work \cite{MR4865748}, where Strichartz estimates for the linearized system were established through dispersion analysis arising from the interplay of the Coriolis effect and acoustic waves.  
These estimates enabled the proof of {\it long-time} existence for \eqref{eq:CNSKC-5} in the critical Besov setting; more precisely, given any initial perturbation
$
(a_0,u_0) \in
(\dot{B}_{2,1}^{\frac{1}{2}}(\mathbb{R}^3) \cap \dot{B}_{2,1}^{\frac{3}{2}}(\mathbb{R}^3)) \times \dot{B}_{2,1}^{\frac{1}{2}}(\mathbb{R}^3)
$
and any $T>0$, there exists a constant $\Omega_T=\Omega_T(a_0,u_0)$ such that \eqref{eq:CNSKC-5} has a unique solution on $[0,T)$ whenever $\Omega_T \leq |\Omega| \leq 1/\varepsilon$.  
Recently, in a new paper \cite{fujii2024global}, they improved the above result to the global well-posedness; more precisely, for any 
$
    (a_0,u_0) \in (\dB_{2,\infty}^{-\frac{3}{2}}(\mathbb{R}^3) \cap \dB_{2,1}^{\frac{3}{2}}(\mathbb{R}^3)) \times (\dB_{2,\infty}^{-\frac{3}{2}}(\mathbb{R}^3) \cap \dB_{2,1}^{\frac{1}{2}}(\mathbb{R}^3))$,
there exist positive constants $\Omega_0=\Omega_0(\mu,P,a_0,u_0)$ and $c_0=c_0(\mu,P,a_0,u_0)$ such that if $\Omega \in \mathbb{R} \setminus \{ 0\}$ and $\varepsilon > 0$ satisfy $\Omega_0 \leq |\Omega| \leq c_0 /\varepsilon$,
then
there exists a unique global solution to \eqref{eq:CNSKC-5}.\par 
The aim of this paper is to improve the result of \cite{fujii2024global} to the critical $\widehat{L^p}$ framework inspired by the first author's previous studies \cites{MR4755730,MR4836084}.
Precisely, we will prove the global well-posedness of the perturbed system \eqref{eq:CNSKC-2} for large initial data $(a_0,m_0)$ in the critical Fourier--Besov space $\fB_{p,1}^{\frac{3}{p}}(\mathbb{R}^3) \times \fB_{p,1}^{\frac{3}{p}-1}(\mathbb{R}^3)$ with an additional low-frequency assumption $(a_0,m_0) \in \fB_{p,1}^{\frac{3}{p}-3}(\mathbb{R}^3)$ for $2 \leq p < 3$, provided that $|\Omega|$ and $\varepsilon$ are sufficiently large and small, respectively, in the sense $1 \ll |\Omega| \ll 1/\varepsilon$.
Here, the Fourier--Besov space $\fB_{p,q}^s(\mathbb{R}^3)$ is the set of all distributions whose Besov norms, with $L^p$-norm replaced by the Fourier--Lebesgue $\widehat{L^p}$-norm, are finite; see the next section for the detailed definition.\par 
Our main result now reads as follows.
\begin{thm}\label{thm:large}
        Let $2 \leq p < 3$ 
   and let
        $a_0 \in \fB_{p,1}^{\frac{3}{p}-3} ( \mathbb{R}^3 ) \cap \fB_{p,1}^{\frac{3}{p}} ( \mathbb{R}^3 )$
        and
        $m_0 \in  \fB_{p,1}^{\frac{3}{p}-3} ( \mathbb{R}^3 ) \cap \fB_{p,1}^{\frac{3}{p}-1} ( \mathbb{R}^3 ) $.
        There exists a positive constant $\Omega_0=\Omega_0(p,\mu,\kappa,P,a_0,u_0)$ and $c_0=c_0(p,\mu,\kappa,P,a_0,u_0)$ such that if $\Omega \in \mathbb{R} \setminus \{ 0\}$ and $\varepsilon > 0$ satisfies 
        \begin{align}\label{assumption}
            \Omega_0 \leq |\Omega| \leq c_0 \frac{1}{\varepsilon},
        \end{align}
        then the system
        \eqref{eq:CNSKC-2} admits a unique global solution $(a,m)$ in the class
        \begin{align}
           a,\varepsilon\nabla a, m \in{}& \tC ( [0,\infty) ; \fB_{p,1}^{\frac{3}{p}-3} ( \mathbb{R}^3 ) \cap \fB_{p,1}^{\frac{3}{p}-1} ( \mathbb{R}^3 ))
            \cap 
            L^1( 0,\infty ; \fB_{p,1}^{\frac{3}{p}+1} (\mathbb{R}^3)) 
        \end{align}
        with $\rho(t,x)= 1 + \varepsilon a(t,x)> 0$.
    \end{thm}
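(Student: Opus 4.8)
The plan is to construct the solution of \eqref{eq:CNSKC-2} by a contraction argument in a function space that superposes the natural parabolic regularity of the linearized flow with \emph{dispersive} space-time norms which can be made small by taking $\abso{\Omega}$ large. The guiding observations are that, because of the capillary term, the density gains two derivatives of smoothing --- so that $(a,\varepsilon\nabla a,m)$ rather than $(a,m)$ is the correct state vector, and one should expect $L^1_t$-maximal regularity into $\fB_{p,1}^{\frac3p+1}$ --- and that every term of $N_\varepsilon[a,m]$ is of divergence form, hence is transported by the linear semigroup exactly like $\nabla a$ or $\Delta m$ and costs no derivative. This structural point is precisely why the momentum formulation \eqref{eq:CNSKC-1}--\eqref{eq:CNSKC-2} is used.

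First I would carry out the linear analysis. Localizing in frequency and diagonalizing the $4\times4$ Fourier symbol of the left-hand side of \eqref{eq:CNSKC-2}, one isolates a purely parabolic mode (real part $\sim-\mu\abso{\xi}^2$, no oscillation) and two modes $\lambda_{\pm}(\xi)$ whose real part behaves like $-\tfrac{\nu}{2}\abso{\xi}^2$ at low and moderate frequencies, is reinforced by the Korteweg term at high frequencies, and whose imaginary part is the rotation--acoustic frequency $\omega(\xi)\sim\bigl(\varepsilon^{-2}\abso{\xi}^2\sp{1+\kappa\varepsilon^2\abso{\xi}^2}+\Omega^2\xi_3^2\abso{\xi}^{-2}\bigr)^{1/2}$. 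From this I would prove two kinds of linear estimates: (a) an $\Omega$-robust maximal-regularity bound, valid whenever $\varepsilon\abso{\Omega}$ stays bounded, controlling $(a,\varepsilon\nabla a,m)$ in $\tC([0,\infty);\fB_{p,1}^{\frac3p-3}\cap\fB_{p,1}^{\frac3p-1})\cap L^1(0,\infty;\fB_{p,1}^{\frac3p-1}\cap\fB_{p,1}^{\frac3p+1})$ by the $\fB_{p,1}^{\frac3p-3}\cap\fB_{p,1}^{\frac3p-1}$-norm of the data; and (b) Strichartz-type estimates, in which the oscillatory part of the linear solution and of the Duhamel operator, measured in space-time norms $L^r(0,\infty;\fB_{p,1}^{s})$ with $1<r<\infty$, gains a positive power of $\abso{\Omega}^{-1}$. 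For (b) I would conjugate out the leading Coriolis phase $e^{\pm it\Omega\xi_3/\abso{\xi}}$, apply stationary-phase and van der Corput bounds to the resulting oscillatory kernel using the non-degeneracy of $\xi\mapsto\xi_3/\abso{\xi}$, and interpolate against the parabolic decay; the degenerate regions $\xi_3\approx0$, $\abso{\xi}\ll1$, $\abso{\xi}\gg1$ are absorbed respectively by the parabolic smoothing, by the low-frequency space $\fB_{p,1}^{\frac3p-3}$ --- this added low-frequency control is what promotes the \emph{long-time} existence of \cite{MR4865748} to the present \emph{global} statement, in the spirit of \cite{fujii2024global} --- and by the extra Korteweg dissipation.

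With this linear toolbox I would close the nonlinear problem by a Banach fixed-point argument for Duhamel's formula associated with the linearized system and the forcing $N_\varepsilon[a,m]$, in the space $E$ of $(a,m)$ with $a,\varepsilon\nabla a,m$ belonging to the class of the theorem and, in addition, to the dispersive norms produced in step (b). The contribution of the initial data is bounded by $\n{(a_0,m_0)}_{\fB_{p,1}^{\frac3p-3}\cap\fB_{p,1}^{\frac3p-1}}$, which is \emph{large} but finite; the decisive point is that in the bilinear and higher-order estimates for $N_\varepsilon$, performed by paraproduct decompositions in the Fourier--Lebesgue scale together with the algebra property of $\fB_{p,1}^{\frac3p}$ (this is where $2\leq p<3$ enters, to secure the relevant product and embedding inequalities), every term of $N_\varepsilon$ can be arranged so that at least one of its factors is measured in the small Strichartz norm while the remaining factor stays in the large energy norm; the explicit powers of $\varepsilon$ carried by $N_\varepsilon$ --- recall in particular that $\varepsilon^{-1}J(\varepsilon a)=P''(1)a+O(\varepsilon a^2)$ is smooth --- take care of the genuinely nonlinear pieces that the dispersion does not see. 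Choosing first $\abso{\Omega}$ large enough that the Strichartz gain beats the (fixed) size of the data, and then $\varepsilon$ small compared with $1/\abso{\Omega}$, makes the map a contraction on a ball of $E$ whose radius is dictated by the data, which yields existence and uniqueness. Finally $a\in L^\infty(0,\infty;\fB_{p,1}^{\frac3p-1}\cap\fB_{p,1}^{\frac3p+1})\hookrightarrow L^\infty(0,\infty;\fB_{p,1}^{\frac3p})\hookrightarrow L^\infty((0,\infty)\times\mathbb R^3)$ with a bound independent of $\varepsilon$, so $\rho=1+\varepsilon a>0$ provided $\varepsilon$ is small.

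I expect the main obstacle to be step (b): establishing the Strichartz-type estimates in the $\fB_{p,1}^{s}$ scale for the \emph{genuine} coupled operator with symbol $\lambda_{\pm}(\xi)$ --- not the clean Coriolis phase --- uniformly over the whole regime $1\ll\abso{\Omega}\ll1/\varepsilon$, with a gain that is compatible at the same time with the very negative low-frequency index $\tfrac3p-3$ and with the high-frequency Korteweg behavior. A close second is the bookkeeping needed to verify that \emph{every} term of $N_\varepsilon$ indeed splits with one factor in the dispersive norm, which is exactly the reason the analysis is done on the momentum formulation.
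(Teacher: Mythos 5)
Your overall strategy (Fourier--Besov energy estimates plus rotation--acoustic dispersion, smallness from $1\ll|\Omega|\ll1/\varepsilon$, paraproduct estimates exploiting the divergence form of $N_\varepsilon$ in the momentum formulation, and the extra Korteweg smoothing for $\varepsilon\nabla a$) is the same as the paper's. But the step you yourself flag as the main obstacle --- Strichartz estimates for the \emph{genuine} coupled viscous--capillary--rotating symbol $\lambda_\pm(\xi)$, uniformly in $1\ll|\Omega|\ll1/\varepsilon$, proved by conjugating out the phase and doing stationary phase/van der Corput --- is left unresolved, and this is a genuine gap: the paper never proves such an estimate. Instead it proves the dispersive bound (Lemma \ref{dispersivelemma}) only for the \emph{inviscid} rotating-acoustic group $U_{\Omega,\varepsilon}(t)$, quoting the known Strichartz estimate of \cite{MR4836084} for frequencies $2^j>|\Omega|\varepsilon$ and interpolating with a trivial $L^\infty_t$ bound via Hausdorff--Young; the viscosity and capillarity terms $\mu\Delta m+(\mu+\lambda)\nabla\div m+\kappa\varepsilon\nabla\Delta a$ are then moved to the right-hand side of Duhamel's formula and absorbed by the high-frequency maximal-regularity estimate \eqref{energyes2}. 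Equally important, the frequencies $2^j\le|\Omega|\varepsilon$ are handled with \emph{no} dispersion at all: there the energy functional of Lemma \ref{energylemma} yields a fourth-order-heat-type decay rate $|\xi|^4/(\Omega^2\varepsilon^2+|\xi|^2)$, producing the prefactor $(|\Omega|\varepsilon)^{2/r}\le c_0^{2/r}$, and this --- not dispersion or parabolic smoothing --- is where the upper bound $|\Omega|\le c_0/\varepsilon$ and the auxiliary space $\fB_{p,1}^{\frac3p-3}$ enter. Your description attributes the low-frequency smallness to the dispersive gain and the space $\fB_{p,1}^{\frac3p-3}$ ``absorbing'' the degenerate region, which is not the actual mechanism and would not by itself produce the needed smallness there.

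A secondary point is architectural: you propose one global-in-time contraction on a ball whose radius is dictated by the (large) data. The paper instead proves local well-posedness by a contraction in a short-time space $Z_T^\varepsilon$ (where smallness comes from the local-in-time decay of the linear flow, not from $\Omega$), then closes global a priori estimates of the form $\mathcal{E}_p\leq C_1\mathcal{E}_{p,0}+C_1(1+\mathcal{D})\mathcal{D}\,\mathcal{E}_p$ together with a bound making the auxiliary norm $\mathcal{D}_{p,q,r}$ small via the three parameters ($|\Omega|\varepsilon\le c_0$ small for low frequencies, a frequency threshold $\beta$ chosen large so the high-frequency tail of the data is small, and $|\Omega|^{-1/r}$ from dispersion in between), and concludes by a continuity argument. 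If you insist on a single global fixed point with large data you must check that every difference estimate also places one factor in the small auxiliary norm, and the resulting Lipschitz constant is of size $\delta\,\mathcal{E}_{p,0}$, so $\delta$ (hence $\Omega_0$, $c_0$, $\beta$) must be chosen depending on the size of the data; this is consistent with the statement but needs to be said, and the bookkeeping is essentially the same as the paper's Lemmas \ref{xygj1}--\ref{xygj2}. Finally, note that $\rho>0$ is obtained in the paper already at the local step through $\|\varepsilon a\|_{L^\infty_t L^\infty}\le 1/2$ (from $\fB_{p,1}^{\frac3p}\hookrightarrow L^\infty$ and the smallness of $\varepsilon$), not from an $\varepsilon$-independent bound on $a$ in $L^\infty(0,\infty;\fB_{p,1}^{\frac3p})$ as you sketch; your embedding chain would also need $a\in L^\infty_t\fB_{p,1}^{\frac3p+1}$, which is not part of the solution class.
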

\begin{rem}
    Let us mention some remarks on the auxiliary regularity $\fB_{p,1}^{\frac{3}{p}-3} ( \mathbb{R}^3)$.
    \begin{itemize}
        \item
        From the linear analysis\footnote{See Lemma \ref{energylemma} and the estimate \eqref{hkjsll}},
        we see that the linear solutions behave as the semigroup $\big\{e^{-t\frac{\Delta^2}{\Omega^2\varepsilon^2-\Delta}}\big\}_{t>0}$, which acts as a forth-order heat kernel in the low frequency part.
        Let us consider this semigroup in maximal $L^1$-regularity $L^1(0,\infty;\fB_{p,1}^{\frac{3}{p}+1}(\mathbb{R}^3))$.
        Then, we have 
        \begin{align}
            \n{e^{-t\frac{\Delta^2}{\Omega^2\varepsilon^2-\Delta}}f}_{{L^{1}}(0,T;\widehat{\dot{B}}{}^{\frac{3}{p}+1}_{p,1})}
            \leq 
            C
            (\Omega\varepsilon)^{2}
            \|f\|_{\widehat{\dot{B}}{}^{\frac{3}{p}-3}_{p,1}}^{\ell;|\Omega|\varepsilon}
            +
            C
            \|f\|_{\widehat{\dot{B}}{}^{\frac{3}{p}-1}_{p,1}}^{h;|\Omega|\varepsilon},
        \end{align}
        where the truncated semi-norms appearing in the right-hand side above are defined in Section \ref{sec:pre}.
        Hence, we require the regularity $\widehat{\dot{B}}{}^{\frac{3}{p}-3}_{p,1}(\mathbb{R}^3)$ for the low frequency part of the initial data.
        \item 
        In the no-Korteweg case \cite{fujii2024global}, the corresponding auxiliary space was taken as 
        $\dB_{2,\infty}^{-\frac{3}{2}}(\mathbb{R}^3)$. 
        There, the third index of the Besov space had to be chosen as $\infty$ for technical reasons, 
        which prevented us from obtaining the global maximal $L^1$-regularity 
        $L^1(0,\infty;\dB_{2,1}^{\frac{5}{2}}(\mathbb{R}^3))$. 
        This restriction required a rather complicated argument in proving the global well-posedness.
        In contrast, in the present Korteweg case, one could similarly take the third index as $\infty$, 
        namely $\fB_{p,\infty}^{\frac{3}{p}-3}(\mathbb{R}^3)$, if one followed the complicated arguments 
        developed in the previous paper. 
        However, since the Korteweg system provides better regularity for the density perturbation, 
        we are able to choose the third index of the auxiliary space as $1$. 
        This choice yields the maximal $L^1$-regularity 
        $L^1(0,\infty;\fB_{p,1}^{\frac{3}{p}+1}(\mathbb{R}^3))$, 
        and allows us to present a comparatively simpler proof of the global a priori estimates.
    \end{itemize}
\end{rem}
This paper is organized as follows.
In Section \ref{sec:pre}, we recall the definitions and elementary properties of function spaces used in this paper.
In Section \ref{sec:lin}, we prepare some linear estimates.
In Section \ref{sec:pf}, making use of them, we establish the nonlinear global a priori estimates of the local solutions, which are given in the Appendix \ref{sec:A}, and complete the proof of our main result.

Throughout this paper, we denote by $C$ the constant, which may differ in each line. In particular, $C=C(a_1,...,a_n)$ means that $C$ depends only on $a_1,...,a_n$. 
For any integrability exponent $1\leq p \leq \infty$, we denote by $p'$ the H\"{o}lder conjugate of $p$. 
For two Banach spaces $X$ and $Y$ with $X \cap Y \neq \varnothing$, 
we write $\| \cdot \|_{X \cap Y} := \| \cdot \|_X + \| \cdot \|_Y$.
For $f=(f_1,...,f_n) \in X^n$, we set $\| f \|_{X}:= \| f_1\|_X + ... + \| f_n \|_X$.

\section{Preliminaries}\label{sec:pre}
In this section, we briefly introduce the Littlewood-Paley decomposition, some needed function spaces, and some related analysis tools. More details can be found in \cites{MR2768550,MR3815247,MR3839617}.\par 
We define $\mathscr{S}(\mathbb{R}^3)$ as the set of all Schwartz functions on $\mathbb{R}^3$ and $\mathscr{S}^\prime(\mathbb{R}^3)$ as the set of all tempered distributions on $\mathbb{R}^3$. For any $f\in \mathscr{S}(\mathbb{R}^3) $, its Fourier transform $\mathscr{F}[f]$ and inverse Fourier transform $\mathscr{F}^{-1}[f]$ are defined as
\begin{align}
\mathscr{F}[f](\xi) =\hf(\xi)
:=\int_{\mathbb{R}^3} e^{-i x \cdot \xi} f(x) dx,
\quad  
\mathscr{F}^{-1}[f](x) :=(2\pi)^{-3} 
\int_{\mathbb{R}^3} e^{i x \cdot \xi} f(\xi) d\xi.
\end{align}
For any $1\leq p \leq \infty$, we denote by $L^p(\mathbb{R}^3)$ the usual Lebesgue space and by $\widehat{L^p}(\mathbb{R}^3)$ the Fourier--Lebesgue space, defined as
\begin{align}
\widehat{L^p}(\mathbb{R}^3):=
\left\{
f\in \mathscr{S}'(\mathbb{R}^3); \hf \in L^{1}_{\rm loc}(\mathbb{R}^3),\,\, 
\| f \|_{\widehat{L^p}}:=\| \hf \|_{L^{p'}} <\infty
\right\}. 
\end{align}
Choosing a sequence of functions $\{\phi_j \}_{j\in \mathbb{Z}} \subset \mathscr{S}(\mathbb{R}^3)$ satisfying
\begin{align}
 & \phi_j (x)=2^{2j}\phi_0(2^j x),
 \quad 0\leq \widehat{\phi_0}    \leq 1, \\
 & \widehat{\phi_0}(\xi) \text{ is a radial function},\\
 &\rm{supp} \,\widehat{\phi_0}  \subset
 \left\{ 
 \xi \in \mathbb{R}^3; \frac{3}{4}\leq |\xi| \leq \frac{8}{3}
 \right\}, \\
 & \sum_{j\in \mathbb{Z}} 
 \widehat{\phi_j}(\xi)=1\,\, \text{ for all  }
 \xi \in \mathbb{R}^3 \backslash \{0\}. 
\end{align}
For any $j\in \mathbb{Z}$ and $f \in \mathscr{S}'(\mathbb{R}^2)$, the dyadic operator $\dot{\Delta}_j$ and low frequency cut-off operator $\dot{S}_j$ are defined as follows: 
\begin{align}
\Delta_j f:= \mathscr{F}^{-1}[  \widehat{\phi_j}  \hf],
\qquad S_j f := \sum_{j'\leq j-1} \Delta_{j'}f. 
\end{align}
We denote by $\mathscr{P}(\mathbb{R}^3)$ the set of all polynomials on $\mathbb{R}^3$. The following so-called Littlewood-Paley decomposition holds for all $f \in \mathscr{S}'(\mathbb{R}^3)/ \mathscr{P}(\mathbb{R}^3)$:\begin{equation*}\begin{split}
		f=\sum_{j\in\mathbb{Z}}\dot{\Delta}_j f.
	\end{split}
\end{equation*}Moreover, one may check that\begin{equation}\label{orthogon}
	\dot{\Delta}_k\dot{\Delta}_j f=0\quad\text{if}\quad|k-j|\geq2,\quad\dot{\Delta}_k(\dot{S}_{j-1}f\dot{\Delta}_jg)=0\quad\text{if}\quad|k-j|\geq5.
\end{equation}\par
Let us recall the following well-known Bernstein inequalities.
\begin{lemm}\label{berstein}
	Let $0<r<R$, $k\in\mathbb{N}$ and $1 \leq p \leq q \leq \infty$. Then, there exists a positive constant $C=C(r,R)$ such that \begin{equation}
		\begin{split}
			&\operatorname{supp}\widehat{f}\subset\{\xi\in\mathbb{R}^3;|\xi|\leq\lambda R \}\Longrightarrow\|\nabla^k f\|_{L^q}\leq C\lambda^{k+3(\frac{1}{p}-\frac{1}{q})}\|f\|_{L^p},\\
			&\operatorname{supp}\widehat{f}\subset\{\xi\in\mathbb{R}^3;|\xi|\leq\lambda R \}\Longrightarrow\|\nabla^k f\|_{\widehat{L^q}}\leq C\lambda^{k+3(\frac{1}{p}-\frac{1}{q})}\|f\|_{\widehat{L^p}},\\
			&\operatorname{supp}\widehat{f}\subset\{\xi\in\mathbb{R}^3;\lambda r\leq|\xi|\leq\lambda R \}\Longrightarrow 	\|f\|_{L^p}\leq C\lambda^{-k}\|\nabla^k f\|_{L^p},
			\\
			&\operatorname{supp}\widehat{f}\subset\{\xi\in\mathbb{R}^3;\lambda r\leq|\xi|\leq\lambda R \}\Longrightarrow 	\|f\|_{\widehat{L^p}}\leq C\lambda^{-k}\|\nabla^k f\|_{\widehat{L^p}}.
		\end{split}
	\end{equation}
\end{lemm}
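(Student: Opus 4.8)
The plan is to derive all four inequalities by reducing each to a Fourier-multiplier estimate on the frequency-localized profile, exploiting that the auxiliary cutoffs are smooth, compactly supported, and scale cleanly under $\xi\mapsto\xi/\lambda$. Fix once and for all two smooth functions on $\mathbb{R}^3$: let $\chi$ equal $1$ on $\{|\xi|\leq R\}$ and be supported in $\{|\xi|\leq 2R\}$, and let $\psi$ equal $1$ on the annulus $\{r\leq|\xi|\leq R\}$ and be supported in $\{r/2\leq|\xi|\leq 2R\}$. Here $\nabla^k f$ denotes the collection $\{\partial^\alpha f\}_{|\alpha|=k}$, with $\n{\nabla^k f}_X=\sum_{|\alpha|=k}\n{\partial^\alpha f}_X$. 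For the first (classical $L^q$, low-frequency) bound, since $\supp\widehat{f}\subset\{|\xi|\leq\lambda R\}$ I insert the cutoff to write $\widehat{\partial^\alpha f}(\xi)=(i\xi)^\alpha\chi(\xi/\lambda)\widehat{f}(\xi)$, so that $\partial^\alpha f=g^\alpha_\lambda * f$ with $g^\alpha_\lambda:=\mathscr{F}^{-1}[(i\xi)^\alpha\chi(\xi/\lambda)]$. The change of variables $\xi=\lambda\eta$ gives the scaling $g^\alpha_\lambda(x)=\lambda^{3+k}g^\alpha_1(\lambda x)$, whence $\n{g^\alpha_\lambda}_{L^s}=\lambda^{k+3(1-1/s)}\n{g^\alpha_1}_{L^s}$ for every $1\leq s\leq\infty$. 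Choosing $s$ through $1+\tfrac1q=\tfrac1s+\tfrac1p$ (admissible precisely because $p\leq q$) and applying Young's convolution inequality produces $\n{\partial^\alpha f}_{L^q}\leq C\lambda^{k+3(1/p-1/q)}\n{f}_{L^p}$ with $C=\n{g^\alpha_1}_{L^s}$ depending only on $R$ and $k$; summing over $|\alpha|=k$ gives the first inequality.

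For the corresponding Fourier--Lebesgue bound the norm already lives on the frequency side, which makes the estimate strictly simpler. On the support one has the pointwise bound $|\widehat{\partial^\alpha f}(\xi)|\leq|\xi|^k|\widehat{f}(\xi)|\leq(\lambda R)^k|\widehat{f}(\xi)|$, and since $p\leq q$ forces $q'\leq p'$, Hölder's inequality on the ball of volume $\sim\lambda^3$ yields $\n{\widehat{f}}_{L^{q'}}\leq C\lambda^{3(1/q'-1/p')}\n{\widehat{f}}_{L^{p'}}=C\lambda^{3(1/p-1/q)}\n{f}_{\widehat{L^p}}$. Multiplying the two factors gives $\n{\nabla^k f}_{\widehat{L^q}}\leq C\lambda^{k+3(1/p-1/q)}\n{f}_{\widehat{L^p}}$, with no convolution needed.

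For the two annulus bounds the only real point is to invert the order-$k$ differentiation on $\{\lambda r\leq|\xi|\leq\lambda R\}$ without dividing by a symbol that vanishes there. The multinomial identity $|\xi|^{2k}=\sum_{|\alpha|=k}\binom{k}{\alpha}\xi^{2\alpha}$ lets me write, on the support,
\[
\widehat{f}(\xi)=\sum_{|\alpha|=k}\binom{k}{\alpha}\,i^{-k}\,\lambda^{-k}m_\alpha(\xi/\lambda)\,\widehat{\partial^\alpha f}(\xi),\qquad m_\alpha(\eta):=\frac{\psi(\eta)\,\eta^\alpha}{|\eta|^{2k}},
\]
where each $m_\alpha$ is smooth and supported in $\{r/2\leq|\eta|\leq2R\}$, so that its denominator never vanishes. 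For the classical $L^p$ statement I pass to the kernel: $\mathscr{F}^{-1}[\lambda^{-k}m_\alpha(\cdot/\lambda)]$ has $L^1$-norm equal to $\lambda^{-k}\n{\mathscr{F}^{-1}m_\alpha}_{L^1}$, so Young's inequality gives $\n{f}_{L^p}\leq C\lambda^{-k}\n{\nabla^k f}_{L^p}$. For the Fourier--Lebesgue statement I bound directly on the frequency side, $\n{f}_{\widehat{L^p}}=\n{\widehat{f}}_{L^{p'}}\leq\sum_{|\alpha|=k}\binom{k}{\alpha}\lambda^{-k}\n{m_\alpha}_{L^\infty}\n{\widehat{\partial^\alpha f}}_{L^{p'}}=C\lambda^{-k}\n{\nabla^k f}_{\widehat{L^p}}$, using only that $\n{m_\alpha}_{L^\infty}<\infty$.

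There is no deep obstacle here; the work is the bookkeeping of the $\lambda$-scalings and the choice of cutoffs guaranteeing that the inverting symbols $\psi(\eta)\eta^\alpha|\eta|^{-2k}$ remain smooth on their support. It is worth recording that in every case the Fourier--Lebesgue version is easier than its classical counterpart, since placing the norm on the frequency side reduces the multiplier estimates to $L^\infty$ symbol bounds (annulus case) or to a single Hölder inequality on a ball (low-frequency case), thereby bypassing the convolution-kernel $L^1$ computations entirely.
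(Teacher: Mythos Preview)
Your proof is correct and follows the standard approach. Note that the paper does not actually supply a proof of this lemma; it is stated as a ``well-known'' result with implicit reference to the texts cited at the start of Section~\ref{sec:pre}. Your argument---Young's convolution inequality with scaled Schwartz kernels for the $L^p$ statements, and direct pointwise symbol bounds combined with H\"older on the frequency side for the $\widehat{L^p}$ statements---is exactly the classical derivation one finds in those references, so there is nothing to compare against beyond confirming that your write-up is sound, which it is.
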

Let us give the definitions of the homogeneous Besov space and the homogeneous Fourier--Besov space.\begin{df}\label{deff1}
	For $s\in\mathbb{R}$ and $1\leq p,\sigma\leq\infty$, the homogeneous Besov space $\dot{B}^{s}_{p,\sigma}(\mathbb{R}^3)$ is defined by\begin{align}
		\dot{B}^{s}_{p,\sigma}(\mathbb{R}^3):=\left\{f\in\mathscr{S}'(\mathbb{R}^3)/ \mathscr{P}(\mathbb{R}^3):\|f\|_{\dot{B}^s_{p,\sigma}}<\infty\right\}
	\end{align}with\begin{align}
		\|f\|_{\dot{B}^s_{p,\sigma}}:=\left\|\left\{2^{js}\|\dot{\Delta}_j f\|_{L^p}\right\}_{j\in\mathbb{Z}}\right\|_{\ell^\sigma}.
	\end{align}
\end{df}
\begin{df}\label{deff2}
	For $s\in\mathbb{R}$ and $1\leq p,\sigma\leq\infty$, the homogeneous Fourier--Besov space $\fB^{s}_{p,\sigma}(\mathbb{R}^3)$ is defined by\begin{align}
	\fB^s_{p,\sigma} (\mathbb{R}^3):=
	\left\{
	f \in \mathscr{S}'(\mathbb{R}^3)/ \mathscr{P}(\mathbb{R}^3); 
	\| f\|_{\fB^s_{p,\sigma}} <\infty
	\right\}
\end{align}with\begin{align}
	\| f\|_{\fB^s_{p,\sigma}}:= \left\|\left\{2^{js}\|\dot{\Delta}_j f\|_{\widehat{L^p}}\right\}_{j\in\mathbb{Z}}\right\|_{\ell^\sigma}
	=  \left\|   
	\left\{    2^{js} 
	\| \widehat{\phi_j} \hf \|
	_{L^{p'}} \right \}_{j\in \mathbb{Z}}
	\right\|_{\ell^{\sigma}}. 
\end{align}
\end{df} 
\begin{rem}
By the Plancherel theorem, one may see that $\fB^s_{2,\sigma} (\mathbb{R}^3)=\dot{B}^{s}_{2,\sigma}(\mathbb{R}^3)$. By the Hanusdorff-Young inequality, we also see that $\dot{B}^{s}_{p,\sigma}(\mathbb{R}^3)\hookrightarrow\fB^s_{p,\sigma} (\mathbb{R}^3)$ for $1\leq p<2$ and $\fB^s_{p,\sigma} (\mathbb{R}^3)\hookrightarrow\dot{B}^{s}_{p,\sigma}(\mathbb{R}^3)$ for $2<p\leq\infty$.
\end{rem}
As a direct consequence of \lemref{berstein}, \defref{deff1} and \defref{deff2}, we collect some properties on Besov spaces and Fourier--Besov spaces in the following lemma.\begin{lemm}\label{prop} Let $s\in\mathbb{R}$ and $1\leq p,\sigma\leq \infty$. Then the following properties hold true:
	\begin{enumerate} 
		\item[(i)] Derivatives: For $k\in\mathbb{N}$, it holds that\begin{equation*}
			\|\nabla^k f\|_{\dot{{B}}^{s}_{p,\sigma}}
			\sim
			\|f\|_{\dot{{B}}^{s+k}_{p,\sigma}},\quad 	\|\nabla^k f\|_{\widehat{\dot{{B}}}{}^{s}_{p,\sigma}}
			\sim
			\|f\|_{\widehat{\dot{{B}}}{}^{s+k}_{p,\sigma}}.
		\end{equation*}
		\item[(ii)] Embeddings: For $1\leq p_1\leq p_2 \leq \infty$ and $1\leq\sigma_1\leq \sigma_2 \leq \infty$, it holds that\begin{equation*}
			\begin{split}
				&\dot{{B}}^{s}_{p_1,\sigma_1}(\mathbb{R}^3)\hookrightarrow \dot{{B}}^{s-3(\frac{1}{p_1}-\frac{1}{p_2})}_{p_2,\sigma_2}(\mathbb{R}^3),\quad \fB^s_{p_1,\sigma_1} (\mathbb{R}^3) 
				\hookrightarrow
				\fB^{s-3(\frac{1}{p_1}-\frac{1}{p_2})}_{p_2,\sigma_2} (\mathbb{R}^3),\\
				& \dot{{B}}^{\frac{3}{p}}_{p,1}(\mathbb{R}^n)\hookrightarrow L^\infty(\mathbb{R}^3),\quad \widehat{\dot{{B}}}^{\frac{3}{p}}_{p,1}(\mathbb{R}^n)\hookrightarrow \widehat{L^\infty}(\mathbb{R}^3). 
			\end{split}
		\end{equation*}
		\item[(iii)] Interpolation: For $s_1<s_2$ and $0<\theta<1$, it holds that\begin{equation}\label{inter2}
			\begin{split}
			\|f\|_{{\dot{B}}^{\theta s_1+(1-\theta)s_2}_{p,\sigma}}&\leq \|f\|^\theta_{{\dot{B}}^{s_1}_{p,\sigma}}\|f\|^{1-\theta}_{{\dot{B}}^{s_2}_{p,\sigma}},\\	
                \|f\|_{\widehat{\dot{B}}{}^{\theta s_1+(1-\theta)s_2}_{p,\sigma}}&\leq \|f\|^\theta_{\widehat{\dot{B}}{}^{s_1}_{p,\sigma}}\|f\|^{1-\theta}_{\widehat{\dot{B}}{}^{s_2}_{p,\sigma}}.
			\end{split}
		\end{equation}
	\end{enumerate}		
\end{lemm}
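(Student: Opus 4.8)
The plan is to derive all three statements directly from the Bernstein inequalities of \lemref{berstein}, the definitions \defref{deff1} and \defref{deff2}, and elementary inequalities for sequence spaces $\ell^\sigma$. Since \lemref{berstein} is stated simultaneously for $L^p$ and for $\widehat{L^p}$, the arguments for $\dB^s_{p,\sigma}$ and $\fB^s_{p,\sigma}$ are literally identical, so I will carry them out once, writing $\n{\cdot}$ for either $\n{\cdot}_{L^p}$ or $\n{\cdot}_{\widehat{L^p}}$; the only structural fact used is that each block $\dot{\Delta}_j f$ has Fourier support in the dyadic annulus $\{c\,2^j \leq |\xi| \leq C\,2^j\}$ determined by $\supp\widehat{\phi_0}$.

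For (i), I would first note that $\nabla^k$ is a Fourier multiplier and hence commutes with the projection $\dot{\Delta}_j$, so $\dot{\Delta}_j \nabla^k f = \nabla^k \dot{\Delta}_j f$. Applying the two annulus versions of \lemref{berstein} with $\lambda = 2^j$ to the spectrally localized function $\dot{\Delta}_j f$ yields the two-sided bound
\begin{align}
2^{jk}\n{\dot{\Delta}_j f} \leq C\,\n{\nabla^k \dot{\Delta}_j f} \leq C\,2^{jk}\n{\dot{\Delta}_j f},
\end{align}
i.e. $\n{\nabla^k \dot{\Delta}_j f} \sim 2^{jk}\n{\dot{\Delta}_j f}$ uniformly in $j$. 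Multiplying by $2^{js}$ and taking the $\ell^\sigma$ norm in $j$ turns the weight $2^{js}2^{jk}$ into $2^{j(s+k)}$, which is exactly $\n{\nabla^k f}_{\dB^{s}_{p,\sigma}} \sim \n{f}_{\dB^{s+k}_{p,\sigma}}$, and likewise for $\fB$.

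For (ii), the key is the gain of integrability in the low-frequency Bernstein inequality. For $p_1 \leq p_2$, applying it with $k=0$ and $\lambda = 2^j$ to $\dot{\Delta}_j f$ gives $\n{\dot{\Delta}_j f}_{L^{p_2}} \leq C\,2^{3j(\frac{1}{p_1} - \frac{1}{p_2})}\n{\dot{\Delta}_j f}_{L^{p_1}}$, so that
\begin{align}
2^{j(s - 3(\frac{1}{p_1}-\frac{1}{p_2}))}\n{\dot{\Delta}_j f}_{L^{p_2}} \leq C\,2^{js}\n{\dot{\Delta}_j f}_{L^{p_1}};
\end{align}
taking $\ell^{\sigma_2}$ norms and using the embedding $\ell^{\sigma_1}\hookrightarrow \ell^{\sigma_2}$ for $\sigma_1\leq\sigma_2$ yields the first claimed embedding, and the $\widehat{L^p}$ Bernstein inequality gives the $\fB$ counterpart. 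For the endpoint embedding into $L^\infty$, I would decompose $f=\sum_j \dot{\Delta}_j f$, bound $\n{f}_{L^\infty}\leq \sum_j \n{\dot{\Delta}_j f}_{L^\infty}$, and apply Bernstein with $q=\infty$, $k=0$ to get $\n{\dot{\Delta}_j f}_{L^\infty}\leq C\,2^{3j/p}\n{\dot{\Delta}_j f}_{L^p}$; the sum on the right is precisely $\n{f}_{\dB^{3/p}_{p,1}}$. The $\widehat{L^\infty}$ version is the same, using $\n{f}_{\widehat{L^\infty}}=\n{\hf}_{L^1}\leq \sum_j \n{\widehat{\phi_j}\hf}_{L^1}=\sum_j \n{\dot{\Delta}_j f}_{\widehat{L^\infty}}$ together with the $\widehat{L^p}$ Bernstein inequality.

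For (iii), I would factor the target weight as a geometric mean of the two endpoint weights,
\begin{align}
2^{j(\theta s_1 + (1-\theta)s_2)}\n{\dot{\Delta}_j f} = \big(2^{js_1}\n{\dot{\Delta}_j f}\big)^{\theta}\big(2^{js_2}\n{\dot{\Delta}_j f}\big)^{1-\theta},
\end{align}
and then apply H\"older's inequality for sequences with conjugate exponents $1/\theta$ and $1/(1-\theta)$ (raising to the power $\sigma$ and summing, or taking suprema when $\sigma=\infty$), giving $\n{\{a_j^\theta b_j^{1-\theta}\}}_{\ell^\sigma}\leq \n{\{a_j\}}_{\ell^\sigma}^{\theta}\n{\{b_j\}}_{\ell^\sigma}^{1-\theta}$ with $a_j=2^{js_1}\n{\dot{\Delta}_j f}$ and $b_j=2^{js_2}\n{\dot{\Delta}_j f}$. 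This is exactly the interpolation inequality, in both the $\dB$ and $\fB$ settings. None of the steps presents a genuine difficulty: the whole lemma is bookkeeping on top of \lemref{berstein}, and the only point requiring a moment's care is the uniform two-sided Bernstein bound on dyadic annuli used for the equivalence in (i), together with the observation that every estimate transfers verbatim from $L^p$ to $\widehat{L^p}$ because \lemref{berstein} supplies both.
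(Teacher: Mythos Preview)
Your proposal is correct and matches exactly what the paper intends: the paper does not write out a proof of this lemma at all, but states it ``as a direct consequence of \lemref{berstein}, \defref{deff1} and \defref{deff2},'' which is precisely the route you take. Your argument fills in the routine details the authors omitted, and there is nothing to correct.
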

To estimate the time-dependent functions, we need the following Chemin--Lerner space, which was initiated in \cite{MR1354312}.
\begin{df}
	For $s\in\mathbb{R}$, $1\leq p,r,\sigma\leq\infty$ and $I\subset\mathbb{R}$ be an interval, the space $\widetilde{L^r}(I;\dot{B}^{s}_{p,\sigma}(\mathbb{R}^3))$ is defined by
    \begin{equation*}
		\begin{split}
            \widetilde{L^r}(I;\dot{B}^{s}_{p,\sigma}(\mathbb{R}^3))
            :=
			\left\{
			f :  I \rightarrow \mathscr{S}'(\mathbb{R}^3)/ \mathscr{P}(\mathbb{R}^3); 
			\| f\|_{\widetilde{L^r}(I;\dot{B}^{s}_{p,\sigma})} <\infty
			\right\}
		\end{split}
	\end{equation*}
    with
    \begin{align}
		\| f\|_{\widetilde{L^r}(I;\dot{B}^{s}_{p,\sigma})}:= 
		\Big\|   
		\{    2^{js} \| \Delta_j f \|_{ L^r(I; {L^p} )    }  \}_{j\in \mathbb{Z}}
		\Big \|_{\ell^{\sigma}}
		. 
	\end{align}
\end{df} 
Similarly, in the setting of the Fourier--Besov space, we also define the space--time space $\widetilde{L^r}(I;\fB^{s}_{p,\sigma}(\mathbb{R}^3))$ as follows.
\begin{df}\label{clspace}
	For $s\in\mathbb{R}$, $1\leq p,r,\sigma\leq\infty$ and $I\subset\mathbb{R}$ be an interval, the space $\widetilde{L^r}(I;\fB^{s}_{p,\sigma}(\mathbb{R}^3))$ is defined by
    \begin{equation*}
		\begin{split}
            \widetilde{L^r}(I;\fB^{s}_{p,\sigma}(\mathbb{R}^3))
            :=
			\left\{
			f :  I \rightarrow \mathscr{S}'(\mathbb{R}^3)/ \mathscr{P}(\mathbb{R}^3); 
			\| f\|_{\widetilde{L^r}(I;\fB^{s}_{p,\sigma})} <\infty
			\right\}
		\end{split}
	\end{equation*}with\begin{align}
	\| f\|_{\widetilde{L^r}(I;\fB^{s}_{p,\sigma})}:= 
	\Big\|   
	\{    2^{js} \| \Delta_j f \|_{ L^r(I; \widehat{L^p} )    }  \}_{j\in \mathbb{Z}}
	\Big \|_{\ell^{\sigma}}
	=\Big\|   
	\{    2^{js} \| \widehat{\phi_j} \hf  \|_{ L^r(I; L^{p'} )    }  \}_{j\in \mathbb{Z}}
	\Big \|_{\ell^{\sigma}}. 
	\end{align}
\end{df} 
For notational simplicity, we set
\begin{align}
	\widetilde{C}(I;\fB^s_{p,\sigma}(\mathbb{R}^3))
	:=
	C(I;\fB^s_{p,\sigma}(\mathbb{R}^3))
	\cap
	\widetilde{L^{\infty}}
	(I;\fB^s_{p,\sigma}(\mathbb{R}^3))
\end{align}
and
\begin{align}
	\widetilde{C}(I;\fB^{s_1}_{p_1,\sigma_1}(\mathbb{R}^3) \cap \fB^{s_2}_{p_2,\sigma_2}(\mathbb{R}^3))
	:=
	\widetilde{C}(I;\fB^{s_1}_{p_1,\sigma_1}(\mathbb{R}^3))
	\cap
	\widetilde{C}(I;\fB^{s_2}_{p_2,\sigma_2}(\mathbb{R}^3))
\end{align}
for $1 \leq p,p_1,p_2 \leq \infty$, $1 \leq \sigma,\sigma_1,\sigma_2 \leq \infty$, and $s,s_1,s_2 \in \mathbb{R}$.
\begin{rem}\label{minkow}
	By the Minkowski inequality, one easily observes that
    \begin{align}
        &
        \|f\|_{\widetilde{L^r}(I;\widehat{\dot{B}}{}^s_{p,\sigma})}\leq\|f\|_{{L^r}(I;\widehat{\dot{B}}{}^s_{p,\sigma})}\qquad \text{if}\quad r\leq \sigma,  \\
		&
        \|f\|_{{L^r}(I;\widehat{\dot{B}}{}^s_{p,\sigma})}\leq\|f\|_{\widetilde{L^r}(I;\widehat{\dot{B}}{}^s_{p,\sigma})}\qquad \text{if}\quad r\geq \sigma.
    \end{align}
\end{rem}
For any $0<\alpha<\beta$, the truncated Fourier--Besov norms on the low, middle and high frequencies are defined as follows: 
\begin{align}
 \| f\|_{\fB^s_{p,\sigma}}^{\ell;\alpha}  &:=
 \Big\|   
 \{    2^{js} 
 \| \Delta_j f \|
 _{\widehat{L^p}}  \}_{2^{j} \leq \alpha}
 \Big \|_{\ell^{\sigma}}, \\
\| f\|_{\fB^s_{p,\sigma}}^{m;\alpha,\beta}  &:=
 \Big\|   
\{    2^{js} 
\| \Delta_j f \|
_{\widehat{L^p}}  \}_{ \alpha <  2^{j} \leq \beta}
\Big \|_{\ell^{\sigma}},  \\ 
 \| f\|_{\fB^s_{p,\sigma}}^{h;\beta}  &:=
\Big\|   
\{    2^{js} 
\| \Delta_j f \|
_{\widehat{L^p}}  \}_{2^{j} > \beta}
\Big \|_{\ell^{\sigma}}, \\ 
\| f\|_{\widetilde{L^r}(I;\fB^{s}_{p,\sigma})}^{\ell;\alpha} & := 
\Big\|   
\{    2^{js} \| \Delta_j f \|_{ L^r(I; \widehat{L^p} )    }  \}_{2^{j} \leq\alpha}
\Big \|_{\ell^{\sigma}} ,\\ 
\| f\|_{\widetilde{L^r}(I;\fB^{s}_{p,\sigma})}^{m;\alpha,\beta} & := 
\Big\|   
\{    2^{js} \| \Delta_j f \|_{ L^r(I; \widehat{L^p} )    }  \}_{ \alpha <2^{j} \leq \beta}
\Big \|_{\ell^{\sigma}},\\
\| f\|_{\widetilde{L^r}(I;\fB^{s}_{p,\sigma})}^{h;\beta} & := 
\Big\|   
\{    2^{js} \| \Delta_j f \|_{ L^r(I; \widehat{L^p} )    }  \}_{2^{j} >\beta}
\Big \|_{\ell^{\sigma}}. 
\end{align}
To estimate the nonlinear terms in Besov spaces and Fourier--Besov spaces, we shall use the following Bony decomposition:\begin{equation}\label{BD}
	fg=\dot{T}_fg+\dot{T}_gf+\dot{R}(f,g),
\end{equation}where \begin{equation}\label{trdef}
	\dot{T}_fg:=\sum_{j\in\mathbb{Z}}\dot{S}_{j-1}f\dot{\Delta}_jg\quad\text{and}\quad\dot{R}(f,g):=\sum_{j\in\mathbb{Z}}\sum_{|j^\prime-j|\leq 1}\dot{\Delta}_jf{\dot{\Delta}}_{j^\prime}g.
\end{equation}
\par Let us present the basic estimates of the operators $\dot{T},\dot{R}$ in Besov spaces and Fourier--Besov spaces in the following lemma. 
\begin{lemm}\label{lemm-Tfg-Rfg}
The following statements hold true:
\begin{enumerate}
    \item { Let $1\leq p,p_1,p_2,\sigma \leq \infty$ and $s,s_1,s_2\in \mathbb{R}$ satisfy
    \begin{align}
 \frac{1}{p}=\frac{1}{p_1}+\frac{1}{p_2},\quad
  s=s_1+s_2, \quad
 s_1\leq 0.
    \end{align}
    Then, there exists a positive constant $C=C(s_1,s_2)$ such that
    \begin{align}
     \n{ \dot{T}_f g  }_{ \dot{B}^{s}_{p,\sigma} }
\leq C
  \n{ f   }_{ \dot{B}^{s_1}_{p_1,1} }
  \n{ g }_{ \dot{B}^{s_2}_{p_2,\sigma} },\quad \n{ \dot{T}_fg }_{ \fB^{s}_{p,\sigma} } 
  \leq C
  \n{ f   }_{ \fB^{s_1}_{p_1,1} }
  \n{ g }_{ \fB^{s_2}_{p_2,\sigma} }.
     \end{align}
    }
    \item{ Let $1\leq p,p_1,p_2,\sigma,\sigma_1,\sigma_2 \leq \infty$ and $s,s_1,s_2\in \mathbb{R}$ satisfy
    \begin{align}
 \frac{1}{p}=\frac{1}{p_1}+\frac{1}{p_2},\quad
  s=s_1+s_2>0, \quad
 \frac{1}{\sigma} \leq \frac{1}{\sigma_1}+\frac{1}{\sigma_2}      .
    \end{align}
     Then, there exists a positive constant $C=C(s_1,s_2)$ such that
     \begin{align}
     \n{ \dot{R} (f ,g)  }_{ \dot{B}^{s}_{p,\sigma} }
\leq C
  \n{ f   }_{ \dot{B}^{s_1}_{p_1,\sigma_1} }
  \n{ g }_{ \dot{B}^{s_2}_{p_2,\sigma_2} },\quad    \n{ \dot{R} (f ,g)  }_{ \fB^{s}_{p,\sigma} }
  \leq C 
  \n{ f   }_{ \fB^{s_1}_{p_1,\sigma_1} }
  \n{ g }_{ \fB^{s_2}_{p_2,\sigma_2} }.
     \end{align}
    }
\end{enumerate}
\end{lemm}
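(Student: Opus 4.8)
The plan is to reduce both parts to the classical Bony paraproduct/remainder estimates (see, e.g., \cites{MR2768550}) and to observe that the Fourier--Lebesgue versions follow by the very same scheme once H\"older's inequality is replaced by Young's inequality on the frequency side. The one preliminary fact I will record is that, thanks to $\tfrac1p=\tfrac1{p_1}+\tfrac1{p_2}$ (equivalently $\tfrac1{p_1'}+\tfrac1{p_2'}=1+\tfrac1{p'}$), the convolution theorem and Young's inequality give, for all $u,v$,
\begin{align}
\n{uv}_{\widehat{L^p}}=C\,\n{\widehat{u}*\widehat{v}}_{L^{p'}}\leq C\,\n{\widehat{u}}_{L^{p_1'}}\n{\widehat{v}}_{L^{p_2'}}=C\,\n{u}_{\widehat{L^{p_1}}}\n{v}_{\widehat{L^{p_2}}},
\end{align}
which is the exact counterpart of $\n{uv}_{L^p}\le\n{u}_{L^{p_1}}\n{v}_{L^{p_2}}$. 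With this in hand the two scales are handled identically, so I would describe only the $\widehat{L^p}$ case in detail.

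For part (1), I would start from the spectral support property in \eqref{orthogon}, which yields $\dot{\Delta}_k\dot{T}_fg=\sum_{|j-k|\le4}\dot{\Delta}_k(\dot{S}_{j-1}f\,\dot{\Delta}_jg)$, and apply the product inequality above, so that it remains to control $\n{\dot{S}_{j-1}f}_{\widehat{L^{p_1}}}$. This is where $s_1\le0$ enters: since then $j'\mapsto2^{-j's_1}$ is non-decreasing, summing the pieces $\dot{\Delta}_{j'}f$ over $j'\le j-2$ gives $\n{\dot{S}_{j-1}f}_{\widehat{L^{p_1}}}\le C(s_1)\,2^{-js_1}\n{f}_{\fB^{s_1}_{p_1,1}}$. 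Writing $\n{\dot{\Delta}_jg}_{\widehat{L^{p_2}}}=2^{-js_2}d_j$ with $\{d_j\}_j\in\ell^\sigma$ of norm $\n{g}_{\fB^{s_2}_{p_2,\sigma}}$ and using $s=s_1+s_2$, I obtain $2^{ks}\n{\dot{\Delta}_k\dot{T}_fg}_{\widehat{L^p}}\le C\,\n{f}_{\fB^{s_1}_{p_1,1}}\sum_{|j-k|\le4}2^{(k-j)s}d_j$, and taking the $\ell^\sigma$-norm in $k$ of this finitely supported convolution closes the estimate.

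For part (2), I would first note that each $\dot{\Delta}_jf\,\dot{\Delta}_{j'}g$ with $|j'-j|\le1$ has spectrum in a ball of radius $\lesssim2^j$, hence $\dot{\Delta}_k(\dot{\Delta}_jf\,\dot{\Delta}_{j'}g)=0$ unless $j\ge k-N_0$ for a fixed $N_0\in\mathbb{N}$, so that $\dot{\Delta}_k\dot{R}(f,g)=\sum_{j\ge k-N_0}\sum_{|j'-j|\le1}\dot{\Delta}_k(\dot{\Delta}_jf\,\dot{\Delta}_{j'}g)$. Applying the product inequality and writing $\n{\dot{\Delta}_jf}_{\widehat{L^{p_1}}}=2^{-js_1}c_j$ and $\sum_{|j'-j|\le1}\n{\dot{\Delta}_{j'}g}_{\widehat{L^{p_2}}}=2^{-js_2}e_j$, where $\{c_j\}_j\in\ell^{\sigma_1}$ and $\{e_j\}_j\in\ell^{\sigma_2}$ have norms controlled by $\n{f}_{\fB^{s_1}_{p_1,\sigma_1}}$ and $\n{g}_{\fB^{s_2}_{p_2,\sigma_2}}$ respectively, I get $2^{ks}\n{\dot{\Delta}_k\dot{R}(f,g)}_{\widehat{L^p}}\le C\sum_{j\ge k-N_0}2^{(k-j)s}c_je_j$. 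Since $s>0$, the kernel $2^{(k-j)s}\mathbf{1}_{j\ge k-N_0}$ is $\ell^1$ in the offset $k-j$, so Young's inequality for sequences bounds the $\ell^\sigma$-norm in $k$ by $C\,\n{\{c_je_j\}_j}_{\ell^\sigma}$, and the sequence H\"older inequality, preceded by the embedding $\ell^\tau\hookrightarrow\ell^\sigma$ with $\tfrac1\tau=\tfrac1{\sigma_1}+\tfrac1{\sigma_2}\ge\tfrac1\sigma$, gives the asserted bound.

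I do not expect a genuine obstacle here: the argument is a routine adaptation of the classical Bony calculus. The only points requiring care are the passage from H\"older to Young in the Fourier--Lebesgue setting (and the accompanying bookkeeping of conjugate exponents) and the verification that the geometric sums over $j$ — which converge precisely because $s_1\le0$ in part (1) and $s>0$ in part (2) — produce $\ell^1$ kernels in the offset $k-j$, so that the concluding $\ell^\sigma$-summation over $k$ costs only a constant.
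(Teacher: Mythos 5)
Your proof is correct, and it is essentially the argument the paper relies on: the paper gives no proof of this lemma, simply citing \cite{MR2768550} for the Besov case and \cite{MR4390809} for the Fourier--Besov case, and your write-up reproduces that standard Bony-calculus proof (spectral localization of $\dot{T}_fg$ and $\dot{R}(f,g)$, the bound $\|\dot{S}_{j-1}f\|\lesssim 2^{-js_1}\|f\|_{\fB^{s_1}_{p_1,1}}$ using $s_1\leq 0$, and the $\ell^1$ kernel in the offset $k-j$ coming from $s>0$ together with Young's and H\"older's inequalities for sequences). The one genuinely Fourier--Lebesgue ingredient you isolate -- replacing H\"older by Young's convolution inequality on the frequency side under $\frac{1}{p_1'}+\frac{1}{p_2'}=1+\frac{1}{p'}$ -- is exactly the device the paper itself uses later, e.g. in the proof of \lemref{keyproduct3}, so your treatment is fully consistent with the paper's framework.
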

For the proof of \lemref{lemm-Tfg-Rfg}, we refer to \cite{MR2768550} for the Besov case, and \cite{MR4390809} for the Fourier--Besov case.\par 
Next, we establish some product estimates in Besov spaces and Fourier--Besov spaces, which play a crucial role in the analysis of the nonlinear terms.
\begin{lemm}\label{keyproduct1}
	Let $1\leq p\leq \infty$ and $s,s_1,s_2,s_3,s_4\in\mathbb{R}$ satisfy\begin{align}
			s=s_1+s_2=s_3+s_4,\quad s<\min\left\{3,\frac{6}{p}\right\},\quad s_1\geq 0, \quad s_3\geq0.
	\end{align}Then, there exists a positive constant $C=C(p,s,s_1,s_3)$ such that
	\begin{equation}
		\begin{split}
	\|fg\|_{\widehat{\dot{B}}{}^{\frac{3}{p}-s}_{p,1}}\leq C\|f\|_{\widehat{\dot{B}}{}^{\frac{3}{p}-s_1}_{p,1}}\|g\|_{\widehat{\dot{B}}{}^{\frac{3}{p}-s_2}_{p,1}}+C\|g\|_{\widehat{\dot{B}}{}^{\frac{3}{p}-s_3}_{p,1}}\|f\|_{\widehat{\dot{B}}{}^{\frac{3}{p}-s_4}_{p,1}}.
		\end{split}
	\end{equation}
\end{lemm}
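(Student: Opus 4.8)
## Proof Strategy for Lemma~\ref{keyproduct1}

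\textbf{Overview.} The plan is to decompose the product $fg$ via the Bony decomposition \eqref{BD}, $fg = \dot{T}_f g + \dot{T}_g f + \dot{R}(f,g)$, and estimate each of the three pieces in $\fB_{p,1}^{\frac{3}{p}-s}$ using \lemref{lemm-Tfg-Rfg}. The role of the two separate splittings $s = s_1+s_2 = s_3+s_4$ is that the paraproduct terms $\dot{T}_f g$ and $\dot{T}_g f$ each require the function sitting in the low-frequency factor to have nonnegative order-shift — this is exactly what the hypotheses $s_1 \geq 0$ and $s_3 \geq 0$ provide — while the remainder term $\dot{R}(f,g)$ is symmetric and only needs a positivity condition on the total regularity, which is where $s < \min\{3, 6/p\}$ enters.

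\textbf{Key steps.} First I would handle $\dot{T}_f g$. Write $\frac{3}{p} - s = (\frac{3}{p} - s_1) + (-s_2) + \ldots$; more precisely, apply the first part of \lemref{lemm-Tfg-Rfg} with exponents $p_1 = p_2 = p$ (so $\frac{1}{p} = \frac{1}{p} + \frac{1}{p}$ fails as written — instead one must use the embedding structure). The cleaner route: since we work at a fixed $p$, use the version of the paraproduct estimate appropriate to a single integrability exponent, namely $\|\dot{T}_f g\|_{\fB_{p,1}^{\alpha+\beta}} \leq C \|f\|_{\fB_{p,1}^{\alpha}} \|g\|_{\fB_{p,1}^{\beta}}$ valid when $\alpha = \frac{3}{p} - s_1 \leq \frac{3}{p}$ (equivalently $s_1 \geq 0$) together with the embedding $\fB_{p,1}^{3/p} \hookrightarrow \widehat{L^\infty}$ from \lemref{prop}(ii); here one takes $\alpha = \frac{3}{p}-s_1$ and $\beta = \frac{3}{p}-s_2$ so that $\alpha + \beta = \frac{3}{p} - s$ uses up one copy of $\frac{3}{p}$. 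The condition $s_1 \geq 0$ guarantees $\dot{S}_{j-1} f$ is controlled in $\widehat{L^\infty}$-type norm. This gives the first term on the right-hand side. By the same argument with the roles of $f,g$ interchanged and using the second splitting $s = s_3 + s_4$ with $s_3 \geq 0$, the term $\dot{T}_g f$ yields the second term on the right-hand side.

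\textbf{The remainder term and the main obstacle.} For $\dot{R}(f,g)$, apply the second part of \lemref{lemm-Tfg-Rfg}: one needs the total regularity index to be positive. Taking orders $\frac{3}{p}-s_1$ for $f$ and $\frac{3}{p}-s_2$ for $g$, the natural output order for $\dot{R}$ is $(\frac{3}{p}-s_1) + (\frac{3}{p}-s_2) = \frac{6}{p} - s$, and to land in $\fB_{p,1}^{\frac{3}{p}-s}$ after accounting for the $3(\frac1p - \frac1q)$-type loss one needs $\frac{6}{p} - s > 0$, i.e. $s < \frac{6}{p}$; the constraint $s < 3$ is the complementary requirement ensuring the remainder operator is well-defined and the summation over the diagonal block converges (this is the standard $\dot{R}$ positivity condition $s_1 + s_2 > 0$ translated into the present normalization, where the ambient dimension is $3$). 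I expect the bookkeeping in this remainder estimate — correctly tracking which of the four indices $s_1,\dots,s_4$ to feed into $\dot R$ so that both the positivity condition and the target regularity are simultaneously met, and checking that $s_1 \geq 0$ or $s_3 \geq 0$ suffices to keep $\dot{S}_{j-1}$-truncations bounded — to be the main technical point; once the indices are chosen as above it reduces to a direct application of \lemref{lemm-Tfg-Rfg} together with \lemref{prop}, with no genuinely new estimate required. Finally, summing the three contributions and using $\|\cdot\|_{\fB_{p,1}^{\frac3p - s}} \leq \|\dot T_f g\|_{\fB_{p,1}^{\frac3p-s}} + \|\dot T_g f\|_{\fB_{p,1}^{\frac3p-s}} + \|\dot R(f,g)\|_{\fB_{p,1}^{\frac3p-s}}$ completes the proof.
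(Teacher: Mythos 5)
Your handling of the two paraproducts is essentially the paper's argument: you put the low-frequency factor in $\fB^{-s_1}_{\infty,1}$ (resp. $\fB^{-s_3}_{\infty,1}$) via the embedding $\fB^{\frac{3}{p}-s_1}_{p,1}\hookrightarrow \fB^{-s_1}_{\infty,1}$ and apply part (1) of \lemref{lemm-Tfg-Rfg} with $(p_1,p_2)=(\infty,p)$, the hypotheses $s_1\geq 0$, $s_3\geq 0$ being exactly the conditions $-s_1\leq 0$, $-s_3\leq 0$ required there; apart from some garbled bookkeeping ("$\alpha+\beta=\frac{3}{p}-s$ uses up one copy of $\frac{3}{p}$"), this part is fine.

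The gap is in the remainder term. You propose a single application of part (2) of \lemref{lemm-Tfg-Rfg} with both factors at exponent $p$, which produces $\dot R(f,g)$ at exponent $p/2$ with regularity $\frac{6}{p}-s$; this is precisely what the paper does, but only for $p\geq 2$, where the positivity condition is $\frac{6}{p}-s>0$ (i.e. the hypothesis $s<\frac{6}{p}$) and one concludes with $\fB^{\frac{6}{p}-s}_{p/2,1}\hookrightarrow\fB^{\frac{3}{p}-s}_{p,1}$. The lemma, however, is stated for all $1\leq p\leq\infty$, and for $1\leq p<2$ this route fails outright because the output exponent $p/2<1$ is outside the admissible range of the spaces and of \lemref{lemm-Tfg-Rfg}. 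This is exactly where $s<3$ is needed, and your explanation of that condition ("the standard positivity condition translated into the present normalization, ensuring the diagonal summation converges") is not correct: in the application you chose, the positivity condition is $\frac{6}{p}-s>0$ and $s<3$ plays no role. The paper's proof performs a case split: for $1\leq p<2$ it estimates $\dot R(f,g)$ at exponent $1$, pairing $f$ at exponent $p$ with $g$ at exponent $p'$ (after the embedding $\fB^{\frac{3}{p}-s_2}_{p,1}\hookrightarrow\fB^{\frac{3}{p'}-s_2}_{p',1}$, legitimate since $p\leq p'$), so the required positivity becomes $\left(\frac{3}{p}-s_1\right)+\left(\frac{3}{p'}-s_2\right)=3-s>0$, and then embeds the resulting order-$(3-s)$ space at exponent $1$ back into $\fB^{\frac{3}{p}-s}_{p,1}$. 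Supplying this second regime (or restricting the statement to $p\geq2$, which is all the paper uses) is the missing ingredient in your proposal.
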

\begin{proof}
	According to \lemref{lemm-Tfg-Rfg} and embeddings, there exists a positive constant $C=C(p,s,s_1,s_3)$ such that\begin{align}
		\n{ \dot{T}_fg }_{\widehat{\dot{B}}{}^{\frac{3}{p}-s}_{p,1}}&\leq C\n{ f   }_{ \fB^{-s_1}_{\infty,1} }
		\n{ g }_{ \fB^{\frac{3}{p}-s_2}_{p,1} }\leq C\|f\|_{\widehat{\dot{B}}{}^{\frac{3}{p}-s_1}_{p,1}}\|g\|_{\widehat{\dot{B}}{}^{\frac{3}{p}-s_2}_{p,1}},\\
			\n{ \dot{T}_gf }_{\widehat{\dot{B}}{}^{\frac{3}{p}-s}_{p,1}}&\leq C\n{ g   }_{ \fB^{-s_3}_{\infty,1} }
		\n{ f }_{ \fB^{\frac{3}{p}-s_4}_{p,1} }\leq C\|g\|_{\widehat{\dot{B}}{}^{\frac{3}{p}-s_3}_{p,1}}\|f\|_{\widehat{\dot{B}}{}^{\frac{3}{p}-s_4}_{p,1}},\\
		\n{ \dot{R} (f ,g)  }_{\widehat{\dot{B}}{}^{\frac{3}{p}-s}_{p,1}}&\leq 
        \begin{cases}
            \begin{aligned}
            C\n{ \dot{R} (f ,g)  }_{ \dot{B}^{3-s}_{1,1} }
            &\leq 
            C\|f\|_{\widehat{\dot{B}}{}^{\frac{3}{p}-s_1}_{p,1}}
            \|g\|_{\widehat{\dot{B}}{}^{\frac{3}{p^\prime}-s_2}_{p^\prime,1}}\\
            &\leq 
            C
            \|f\|_{\widehat{\dot{B}}{}^{\frac{3}{p}-s_1}_{p,1}}
            \|g\|_{\widehat{\dot{B}}{}^{\frac{3}{p}-s_2}_{p,1}}
            \end{aligned}
            & \text{if}\quad 1\leq p<2,
            \\
            C\n{ \dot{R} (f ,g)  }_{ \dot{B}^{\frac{6}{p}-s}_{\frac{p}{2},1} }\leq C\|f\|_{\widehat{\dot{B}}{}^{\frac{3}{p}-s_1}_{p,1}}\|g\|_{\widehat{\dot{B}}{}^{\frac{3}{p}-s_2}_{p,1}} & \text{if}\quad p\geq 2,
        \end{cases}
	\end{align}which together with the Bony decomposition of $fg$ completes the proof.
\end{proof}
\begin{lemm}\label{keyproduct2}
	Let $2\leq q\leq 4$ and $s,s_1,s_2,s_3,s_4\in\mathbb{R}$ satisfy\begin{equation}
		\begin{split}
			s=s_1+s_2=s_3+s_4,\quad s<\frac{6}{q},\quad \min\left\{s_1,s_3\right\}\geq\frac{3}{2}-\frac{3}{q}.
		\end{split}
	\end{equation}Then, there exists a positive constant $C=C(q,s,s_1,s_3)$ such that
	\begin{equation}
		\begin{split}
			\|fg\|_{\dot{B}^{\frac{3}{2}-s}_{2,1}}\leq C\|f\|_{\dot{B}^{\frac{3}{q}-s_1}_{q,1}}\|g\|_{\dot{B}^{\frac{3}{q}-s_2}_{q,1}}+ C\|g\|_{\dot{B}^{\frac{3}{q}-s_3}_{q,1}}\|f\|_{\dot{B}^{\frac{3}{q}-s_4}_{q,1}}.
		\end{split}
	\end{equation}
\end{lemm}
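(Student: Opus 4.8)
The plan is to apply the Bony decomposition
\begin{equation*}
fg = \dot{T}_f g + \dot{T}_g f + \dot{R}(f,g)
\end{equation*}
and to reduce each of the three pieces to \lemref{lemm-Tfg-Rfg} after a single Bernstein-type embedding, exactly in the spirit of the proof of \lemref{keyproduct1}. Throughout, introduce the auxiliary exponent $r \in [4,\infty]$ defined by $\tfrac1r := \tfrac12 - \tfrac1q$; this is admissible precisely because $2 \leq q \leq 4$, and it satisfies $\tfrac12 = \tfrac1r + \tfrac1q$ together with $q \leq r$.

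For the first paraproduct I would first embed, via \lemref{prop}(ii), $\dot{B}^{\frac3q-s_1}_{q,1}(\mathbb{R}^3) \hookrightarrow \dot{B}^{\frac32-\frac3q-s_1}_{r,1}(\mathbb{R}^3)$, the regularity drop being $3(\tfrac1q-\tfrac1r) = \tfrac6q-\tfrac32$. The point is that the new index $\tfrac32-\tfrac3q-s_1$ is $\leq 0$ exactly because of the hypothesis $s_1 \geq \tfrac32-\tfrac3q$, so \lemref{lemm-Tfg-Rfg}(1) applies with $\tfrac12 = \tfrac1r+\tfrac1q$ and the splitting $\tfrac32-s = (\tfrac32-\tfrac3q-s_1)+(\tfrac3q-s_2)$, which is valid since $s=s_1+s_2$. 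This gives
\begin{equation*}
\n{\dot{T}_f g}_{\dot{B}^{\frac32-s}_{2,1}} \leq C \n{f}_{\dot{B}^{\frac32-\frac3q-s_1}_{r,1}} \n{g}_{\dot{B}^{\frac3q-s_2}_{q,1}} \leq C \n{f}_{\dot{B}^{\frac3q-s_1}_{q,1}} \n{g}_{\dot{B}^{\frac3q-s_2}_{q,1}}.
\end{equation*}
The symmetric term $\dot{T}_g f$ is treated in the same way, now using the splitting $s = s_3+s_4$ and the hypothesis $s_3 \geq \tfrac32-\tfrac3q$, and yields the second term on the right-hand side of the claimed estimate.

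For the remainder I would work first in $\dot{B}^{\frac6q-s}_{q/2,1}(\mathbb{R}^3)$: by \lemref{lemm-Tfg-Rfg}(2) with $\tfrac{1}{q/2}=\tfrac2q$ and third indices $\sigma_1=\sigma_2=1$ (so that $1 \leq \tfrac1{\sigma_1}+\tfrac1{\sigma_2}$), which is legitimate because $\tfrac6q-s = (\tfrac3q-s_1)+(\tfrac3q-s_2) > 0$ thanks to $s<\tfrac6q$, one obtains $\n{\dot{R}(f,g)}_{\dot{B}^{\frac6q-s}_{q/2,1}} \leq C\n{f}_{\dot{B}^{\frac3q-s_1}_{q,1}}\n{g}_{\dot{B}^{\frac3q-s_2}_{q,1}}$. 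Since $q \leq 4$ gives $\tfrac q2 \leq 2$, the embedding $\dot{B}^{\frac6q-s}_{q/2,1}(\mathbb{R}^3) \hookrightarrow \dot{B}^{\frac32-s}_{2,1}(\mathbb{R}^3)$ of \lemref{prop}(ii) (the drop being $3(\tfrac2q-\tfrac12)=\tfrac6q-\tfrac32$) places the remainder in the target space. Adding the three contributions finishes the proof.

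I do not expect a genuine obstacle here: the argument is essentially bookkeeping parallel to \lemref{keyproduct1}. The only thing that requires attention is verifying that all the structural constraints mesh simultaneously — that $s_1,s_3 \geq \tfrac32-\tfrac3q$ is precisely what renders the two paraproduct indices nonpositive, that $s<\tfrac6q$ is precisely what renders the remainder index positive, and that $2\leq q\leq 4$ is precisely what keeps both $r\geq 2$ (needed so that H\"older lands in $L^2$) and $\tfrac q2\leq 2$ (needed for the final embedding into $\dot{B}^{\frac32-s}_{2,1}$) in force. All of these are built into the hypotheses, so each of the three estimates closes with no loss.
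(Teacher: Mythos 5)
Your proposal is correct and follows essentially the same route as the paper: the same Bony decomposition, the same application of Lemma \ref{lemm-Tfg-Rfg} to each piece, and the same embeddings, since your auxiliary exponent $r$ with $\tfrac1r=\tfrac12-\tfrac1q$ is exactly the paper's $\tfrac{2q}{q-2}$, and the remainder is likewise handled in $\dot{B}^{\frac{6}{q}-s}_{q/2,1}(\mathbb{R}^3)$ before embedding into $\dot{B}^{\frac{3}{2}-s}_{2,1}(\mathbb{R}^3)$. All the index checks you record (nonpositivity of the paraproduct indices from $s_1,s_3\geq\tfrac32-\tfrac3q$, positivity of $\tfrac6q-s$, and the constraints from $2\leq q\leq 4$) match the paper's argument.
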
	
\begin{proof}
Applying \lemref{lemm-Tfg-Rfg} and embeddings indicates that there exists a positive constant $C=C(q,s,s_1,s_3)$ such that\begin{align}
	\|\dot{T}_fg\|_{\dot{B}^{\frac{3}{2}-s}_{2,1}}&\leq C \|f\|_{\dot{B}^{\frac{3}{2}-\frac{3}{q}-s_1}_{\frac{2q}{q-2},1}}\|g\|_{\dot{B}^{\frac{3}{q}-s_2}_{q,1}}\leq C \|f\|_{\dot{B}^{\frac{3}{q}-s_1}_{q,1}}\|g\|_{\dot{B}^{\frac{3}{q}-s_2}_{q,1}},\\
\|\dot{T}_gf\|_{\dot{B}^{\frac{3}{2}-s}_{2,1}}&\leq C \|g\|_{\dot{B}^{\frac{3}{2}-\frac{3}{q}-s_3}_{\frac{2q}{q-2},1}}\|f\|_{\dot{B}^{\frac{3}{q}-s_4}_{q,1}}\leq C \|g\|_{\dot{B}^{\frac{3}{q}-s_3}_{q,1}}\|f\|_{\dot{B}^{\frac{3}{q}-s_4}_{q,1}},\\
\|\dot{R}(f,g)\|_{\dot{B}^{\frac{3}{2}-s}_{2,1}}&\leq C \|\dot{R}(f,g)\|_{\dot{B}^{\frac{6}{q}-s}_{\frac{q}{2},1}}\leq C\|f\|_{\dot{B}^{\frac{3}{q}-s_1}_{q,1}}\|g\|_{\dot{B}^{\frac{3}{q}-s_2}_{q,1}},
	\end{align}
	which together with
	the Bony decomposition of $fg$ completes the proof.
\end{proof}
\begin{lemm}\label{keyproduct3}
	Let $2\leq p\leq \infty$, $2\leq q\leq 4$, $s>-\frac{6}{q}$, $s_1\geq 0$, $s_2\geq\frac{3}{2}-\frac{3}{q}$ and $\beta>0$. Then, there exists a positive constant $C=C(p,q,s,s_1,s_2)$ such that\begin{align}
	\|fg\|_{\widehat{\dot{B}}{}^{\frac{3}{p}+s}_{p,1}}&\leq C\|f\|_{\widehat{\dot{B}}{}^{\frac{3}{p}-s_1}_{p,1}}\|g\|_{\widehat{\dot{B}}^{\frac{3}{p}+s+s_1}_{p,1}}+ C\|g\|_{\dot{B}^{\frac{3}{q}-s_2}_{q,1}}\|f\|_{\dot{B}^{\frac{3}{q}+s+s_2}_{q,1}},\label{product31}\\
\|fg\|_{\widehat{\dot{B}}^{\frac{3}{p}+s}_{p,1}}^{h;\beta}&\leq C\|f\|_{\widehat{\dot{B}}{}^{\frac{3}{p}-s_1}_{p,1}}\|g\|_{\widehat{\dot{B}}^{\frac{3}{p}+s+s_1}_{p,1}}^{h;\frac{\beta}{16}}+ C\|g\|_{\dot{B}^{\frac{3}{q}-s_2}_{q,1}}\|f\|_{\dot{B}^{\frac{3}{q}+s+s_2}_{q,1}}.\label{product32}
	\end{align}
\end{lemm}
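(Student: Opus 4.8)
The plan is to apply the Bony decomposition \eqref{BD}, writing $fg=\dot{T}_f g+\dot{T}_g f+\dot{R}(f,g)$, and to route the paraproduct $\dot{T}_f g$ (in which $f$ carries the low frequencies) to the first, $\widehat{L^p}$-based term on the right-hand side of \eqref{product31}, while sending $\dot{T}_g f$ and the remainder $\dot{R}(f,g)$ to the second, $L^q$-based term. This mirrors the proofs of \lemref{keyproduct1} and \lemref{keyproduct2}; the one genuinely new feature is that the low-frequency factor $g$ in $\dot{T}_g f$ has regularity index $\tfrac3q-s_2$, which need not be nonpositive, so this piece must be estimated through an auxiliary higher-integrability space rather than directly in the target space.

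For $\dot{T}_f g$ I would work entirely in the Fourier--Besov scale: the paraproduct estimate in \lemref{lemm-Tfg-Rfg} with integrability exponents $(\infty,p)$ and regularity exponents $(-s_1,\tfrac3p+s+s_1)$ — admissible because $s_1\geq0$ makes the first exponent nonpositive — combined with the embedding $\fB^{\frac3p-s_1}_{p,1}\hookrightarrow\fB^{-s_1}_{\infty,1}$ from \lemref{prop}, yields $\|\dot{T}_f g\|_{\fB^{\frac3p+s}_{p,1}}\leq C\|f\|_{\fB^{\frac3p-s_1}_{p,1}}\|g\|_{\fB^{\frac3p+s+s_1}_{p,1}}$, with no upper bound on $s$ required. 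For $\dot{T}_g f$, in which $g$ is the smoothed factor, I would apply the classical paraproduct estimate in \lemref{lemm-Tfg-Rfg} with integrability exponents $\big(\tfrac{2q}{q-2},q\big)$ — read as $(\infty,q)$ when $q=2$ — and regularity exponents $\big(\tfrac32-\tfrac3q-s_2,\tfrac3q+s+s_2\big)$, whose first entry is $\leq0$ precisely by the hypothesis $s_2\geq\tfrac32-\tfrac3q$ and whose sum equals $\tfrac32+s$; following this with the Sobolev embedding $\dot{B}^{\frac3q-s_2}_{q,1}\hookrightarrow\dot{B}^{\frac32-\frac3q-s_2}_{\frac{2q}{q-2},1}$ (valid since $q\leq4$) gives $\|\dot{T}_g f\|_{\dot{B}^{\frac32+s}_{2,1}}\leq C\|g\|_{\dot{B}^{\frac3q-s_2}_{q,1}}\|f\|_{\dot{B}^{\frac3q+s+s_2}_{q,1}}$, and since $\dot{B}^{\frac32+s}_{2,1}=\fB^{\frac32+s}_{2,1}\hookrightarrow\fB^{\frac3p+s}_{p,1}$ by Plancherel and \lemref{prop} (using $p\geq2$), this falls into the second term. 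For $\dot{R}(f,g)$ I would invoke the remainder estimate in \lemref{lemm-Tfg-Rfg} with integrability $(q,q)$ and regularity $\big(\tfrac3q+s+s_2,\tfrac3q-s_2\big)$, whose sum $\tfrac6q+s$ is positive exactly because $s>-\tfrac6q$, obtaining $\|\dot{R}(f,g)\|_{\dot{B}^{\frac6q+s}_{\frac q2,1}}\leq C\|f\|_{\dot{B}^{\frac3q+s+s_2}_{q,1}}\|g\|_{\dot{B}^{\frac3q-s_2}_{q,1}}$; since $\tfrac q2\leq2\leq p$, the chain $\dot{B}^{\frac6q+s}_{\frac q2,1}\hookrightarrow\fB^{\frac6q+s}_{\frac q2,1}\hookrightarrow\fB^{\frac3p+s}_{p,1}$ (the Hausdorff--Young embedding and \lemref{prop}) finishes this piece, and adding the three contributions proves \eqref{product31}.

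For the truncated estimate \eqref{product32}, the bounds for $\dot{T}_g f$ and $\dot{R}(f,g)$ are inherited from the untruncated ones, since a truncated Fourier--Besov norm is dominated by the full norm; thus they again reproduce the second term. For $\dot{T}_f g$ I would redo the estimate by hand, using \eqref{orthogon}: since $\dot{\Delta}_k(\dot{S}_{j-1}f\,\dot{\Delta}_j g)=0$ unless $|k-j|\leq4$, the restriction $2^k>\beta$ forces $2^j>\beta/16$, so only the dyadic blocks of $g$ with $2^j>\beta/16$ contribute; combining this with $\|\dot{S}_{j-1}f\|_{\widehat{L^\infty}}\leq C2^{js_1}\|f\|_{\fB^{-s_1}_{\infty,1}}$ (valid for $s_1\geq0$) and $\|uv\|_{\widehat{L^p}}\leq\|u\|_{\widehat{L^\infty}}\|v\|_{\widehat{L^p}}$ produces $\|\dot{T}_f g\|_{\fB^{\frac3p+s}_{p,1}}^{h;\beta}\leq C\|f\|_{\fB^{\frac3p-s_1}_{p,1}}\|g\|_{\fB^{\frac3p+s+s_1}_{p,1}}^{h;\frac{\beta}{16}}$, which is the first term of \eqref{product32}.

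The only place where the quantitative hypotheses are truly used — and hence the main point to handle carefully — is the choice of the auxiliary integrability exponent $\tfrac{2q}{q-2}$ in the $\dot{T}_g f$ estimate: it is the exponent for which the H\"{o}lder relation $\tfrac12=\tfrac{q-2}{2q}+\tfrac1q$ holds, the subsequent Sobolev step then forces the compatibility condition $q\leq4$, and the regularity it gains for $g$ is precisely $\tfrac32-\tfrac3q$, matching the lower bound imposed on $s_2$. Everything else is routine bookkeeping of exponents in \lemref{lemm-Tfg-Rfg}, \lemref{prop}, and the Hausdorff--Young embeddings.
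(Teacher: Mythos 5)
Your proposal is correct and follows essentially the same route as the paper: the same Bony splitting with $\dot{T}_fg$ estimated in the Fourier--Besov scale via \lemref{lemm-Tfg-Rfg} and the embedding $\fB^{\frac3p-s_1}_{p,1}\hookrightarrow\fB^{-s_1}_{\infty,1}$, the terms $\dot{T}_gf$ and $\dot{R}(f,g)$ handled through the classical Besov paraproduct/remainder bounds with auxiliary exponents $\frac{2q}{q-2}$ and $\frac q2$ followed by the embeddings into $\fB^{\frac3p+s}_{p,1}$, and the truncated estimate \eqref{product32} reduced to the $\dot{T}_fg$ piece via the quasi-orthogonality \eqref{orthogon} and the convolution inequality on the Fourier side, exactly as in \eqref{haobajiajy1}. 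No gaps to report.
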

\begin{proof}
	Resorting to \lemref{lemm-Tfg-Rfg} and embeddings, we get that there exists a positive constant $C=C(p,q,s,s_1,s_2)$ such that\begin{align}
		\|\dot{T}_fg\|_{\widehat{\dot{B}}^{\frac{3}{p}+s}_{p,1}}&\leq C\|f\|_{\widehat{\dot{B}}^{-s_1}_{\infty,1}}\|g\|_{\widehat{\dot{B}}^{\frac{3}{p}+s+s_1}_{p,1}}\leq C\|f\|_{\widehat{\dot{B}}{}^{\frac{3}{p}-s_1}_{p,1}}\|g\|_{\widehat{\dot{B}}^{\frac{3}{p}+s+s_1}_{p,1}},\\
		\|\dot{T}_gf\|_{\widehat{\dot{B}}^{\frac{3}{p}+s}_{p,1}}&\leq C\|\dot{T}_gf\|_{\dot{B}^{\frac{3}{2}+s}_{2,1}}\\
		&\leq C \|g\|_{\dot{B}^{\frac{3}{2}-\frac{3}{q}-s_2}_{\frac{2q}{q-2},1}}\|f\|_{\dot{B}^{\frac{3}{q}+s+s_2}_{q,1}}\leq C \|g\|_{\dot{B}^{\frac{3}{q}-s_2}_{q,1}}\|f\|_{\dot{B}^{\frac{3}{q}+s+s_2}_{q,1}},\\
		\|\dot{R}(f,g)\|_{\widehat{\dot{B}}^{\frac{3}{p}+s}_{p,1}}&\leq C \|\dot{R}(f,g)\|_{\dot{B}^{\frac{6}{q}+s}_{\frac{q}{2},1}}\leq C\|g\|_{\dot{B}^{\frac{3}{q}-s_2}_{q,1}}\|f\|_{\dot{B}^{\frac{3}{q}+s+s_2}_{q,1}},
	\end{align}
	which together with
	the Bony decomposition of $fg$ completes the proof of \eqref{product31}. To obtain \eqref{product32}, it remains to prove\begin{equation}
		\begin{split}
			\|\dot{T}_fg\|_{\widehat{\dot{B}}^{\frac{3}{p}+s}_{p,1}}^{h;\beta}\leq C\|f\|_{\widehat{\dot{B}}^{\frac{3}{p}-s_1}_{p,1}}\|g\|_{\widehat{\dot{B}}^{\frac{3}{p}+s+s_1}_{p,1}}^{h;\frac{\beta}{16}}.
		\end{split}
	\end{equation}
Notice that \eqref{orthogon} implies\begin{equation}
		\begin{split}
			\dot{\Delta}_j\dot{T}_fg=\sum_{|j-j^\prime|\leq 4}\dot{\Delta}_j(\dot{S}_{j^\prime-1}f\dot{\Delta}_{j^\prime} g).
		\end{split}
	\end{equation}
Hence, by using the convolution inequality, we have
\begin{align}
    \|\dot{\Delta}_j\dot{T}_fg\|_{\widehat{L^p}}
    &\leq \sum_{|j-j^\prime|\leq 4}\|\dot{\Delta}_j(\dot{S}_{j^\prime-1}f\dot{\Delta}_{j^\prime} g)\|_{\widehat{L^p}}
    \\&\leq \sum_{|j-j^\prime|\leq 4} \left\|\left(\mathscr{F}[\dot{S}_{j^\prime-1}f]*\mathscr{F}[\dot{\Delta}_{j^\prime}g]\right)(\xi)\right\|_{L^{p^\prime}}\\
    &\leq\sum_{|j-j^\prime|\leq 4}\|\mathscr{F}[\dot{S}_{j^\prime-1}f](\xi)\|_{L^{1}}\|\mathscr{F}[\dot{\Delta}_{j^\prime}g](\xi)\|_{L^{p^\prime}}
    \\
    &
    = \sum_{|j-j^\prime|\leq 4} \|\dot{S}_{j^\prime-1}f\|_{\widehat{L^{\infty}}}\|\dot{\Delta}_{j^\prime} g\|_{\widehat{L^p}}\\
    &\leq \sum_{|j-j^\prime|\leq 4} \sum_{j^{\prime\prime}\leq j^\prime-2} \|\dot{\Delta}_{j^{\prime\prime}}f\|_{\widehat{L^{\infty}}}\|\dot{\Delta}_{j^\prime} g\|_{\widehat{L^p}}\\
    &\leq \left(\sum_{j^{\prime\prime}\leq j+2} \|\dot{\Delta}_{j^{\prime\prime}}f\|_{\widehat{L^{\infty}}}\right)\left(\sum_{|j-j^\prime|\leq 4} \|\dot{\Delta}_{j^\prime} g\|_{\widehat{L^p}}\right).
\end{align}
As a result, thanks to the convolution inequality and the embeddings, we see that
\begin{equation}\label{haobajiajy1}
	\begin{split}
			\|\dot{T}_fg&\|_{\widehat{\dot{B}}^{\frac{3}{p}+s}_{p,1}}^{h;\beta}= \sum_{2^j>\beta} 2^{j\left(\frac{3}{p}+s\right)}\|\dot{\Delta}_j\dot{T}_fg\|_{\widehat{L^{p}}}\\
			&\leq \sum_{2^j>\beta}2^{j\left(\frac{3}{p}+s\right)}\left(\sum_{j^{\prime\prime}\leq j+2} \|\dot{\Delta}_{j^{\prime\prime}}f\|_{\widehat{L^{\infty}}}\right)\left(\sum_{|j-j^\prime|\leq 4} \|\dot{\Delta}_{j^\prime} g\|_{\widehat{L^p}}\right)\\
			&\leq C\left(\sup_{2^j>\beta}\sum_{j^{\prime\prime}\leq j+2}2^{(j-j^{\prime\prime})(-s_1)}2^{-j^{\prime\prime}s_1} \|\dot{\Delta}_{j^{\prime\prime}}f\|_{\widehat{L^{\infty}}}\right)\left(\sum_{2^{j^\prime}> \frac{\beta}{16}}2^{{j^\prime}\left(\frac{3}{p}+s+s_1\right)} \|\dot{\Delta}_{j^\prime} g\|_{\widehat{L^p}}\right)
			\\
			&\leq   C\|f\|_{\widehat{\dot{B}}{}^{-s_1}_{\infty,1}} \|g\|_{\widehat{\dot{B}}^{\frac{3}{p}+s+s_1}_{p,1}}^{h;\frac{\beta}{16}}
			\\
			&\leq C \|f\|_{\widehat{\dot{B}}^{\frac{3}{p}-s_1}_{p,1}}\|g\|_{\widehat{\dot{B}}^{\frac{3}{p}+s+s_1}_{p,1}}^{h;\frac{\beta}{16}}.
		\end{split}
	\end{equation}This means the proof of \eqref{product32} is also completed.
\end{proof}
We end this section with the following lemma on the composition estimates in the Fourier--Besov spaces.
\begin{lemm}[\cite{MR4836084}*{Lemma 2.6}]\label{lemm-comp} 
Let $1 \leq p \leq \infty$ and $s \in \mathbb{R}$ satisfy 
\begin{align}
    -\min 
    \Mp{\frac{3}{p},\frac{3}{p'}}
    <s\leq 
    \frac{3}{p}.
\end{align}
Let $F=F(s)$ be a $C^1$ function on $\mathbb{R}$. Assume that $F(0)=0$ and $F$ is real analytic at $s=0$; $R_0$ denotes its radius of convergence.
Then, there exist two positive constant $c=c(p,F,R_0)$ and $C=C(p,s,F,R_0)$ such that for any $u,v \in \fB_{p,1}^s(\mathbb{R}^3) \cap \fB_{p,1}^{\frac{3}{p}}(\mathbb{R}^3)$ satisfying $\n{u}_{\fB_{p,1}^{\frac{3}{p}}}, \n{v}_{\fB_{p,1}^{\frac{3}{p}}} \leq c$, it holds that
\begin{align}
    \n{F(u)}_{\fB_{p,1}^s}
    \leq
    C
    \n{u}_{\fB_{p,1}^s},
    \qquad
    \n{F(u)-F(v)}_{\fB_{p,1}^s}
    \leq
    C
    \n{u-v}_{\fB_{p,1}^s}.
\end{align}
\end{lemm}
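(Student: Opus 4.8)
Since this lemma is exactly \cite{MR4836084}*{Lemma~2.6}, the quickest route is simply to cite it; nonetheless, let me describe how I would prove it from scratch, following the classical scheme for composition estimates in critical function spaces. \textbf{Step 1 (reduce to powers).} As $F$ is real analytic at $0$ with $F(0)=0$, I would write $F(x)=\sum_{k\geq1}a_k x^k$ on $|x|<R_0$, where $\limsup_{k\to\infty}|a_k|^{1/k}=R_0^{-1}$ by Cauchy--Hadamard, so $\sum_{k\geq1}|a_k|t^k<\infty$ for all $0\leq t<R_0$. Using $\fB^{\frac3p}_{p,1}(\mathbb{R}^3)\hookrightarrow\widehat{L^\infty}(\mathbb{R}^3)\hookrightarrow L^\infty(\mathbb{R}^3)$ from \lemref{prop}(ii), the hypothesis $\|u\|_{\fB^{\frac3p}_{p,1}}\leq c$ (with $c$ small, to be chosen) makes $\|u\|_{L^\infty}$ small and $F(u)$ well defined, and it suffices to bound the partial sums $\sum_{k=1}^{N}a_ku^k$ in $\fB^{s}_{p,1}$ uniformly in $N$ and then identify the limit.

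\textbf{Step 2 (product estimates).} The key input is the bilinear bound
\begin{align}
\|fg\|_{\fB^{\sigma}_{p,1}}\leq C\,\|f\|_{\fB^{\sigma}_{p,1}}\|g\|_{\fB^{\frac3p}_{p,1}}
\qquad\Big(-\min\big\{\tfrac3p,\tfrac3{p'}\big\}<\sigma\leq\tfrac3p\Big),
\end{align}
which I would deduce from \lemref{keyproduct1}: writing the target index as $\sigma=\frac3p-\widetilde{s}$ with $\widetilde{s}=\frac3p-\sigma\in[0,\min\{3,6/p\})$ --- the constraint $\widetilde{s}<\min\{3,6/p\}$ being precisely $\sigma>-\min\{3/p,3/p'\}$ --- and choosing the admissible splittings $(s_1,s_2)=(\widetilde{s},0)$ and $(s_3,s_4)=(0,\widetilde{s})$ (legitimate since $\widetilde{s}\geq0$ whenever $\sigma\leq\frac3p$). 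In particular $\fB^{\frac3p}_{p,1}(\mathbb{R}^3)$ is a Banach algebra, and iterating gives, for every $k\geq1$ and with $C$ independent of $k$,
\begin{align}
\|u^k\|_{\fB^{s}_{p,1}}\leq C^{k-1}\|u\|_{\fB^{s}_{p,1}}\|u\|_{\fB^{\frac3p}_{p,1}}^{k-1},
\qquad
\|u^k\|_{\fB^{\frac3p}_{p,1}}\leq C^{k-1}\|u\|_{\fB^{\frac3p}_{p,1}}^{k}.
\end{align}

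\textbf{Step 3 (sum the series and treat differences).} I would fix $c$ so small that $Cc<R_0$; then for $\|u\|_{\fB^{\frac3p}_{p,1}}\leq c$,
\begin{align}
\Big\|\sum_{k=1}^{N}a_ku^k\Big\|_{\fB^{s}_{p,1}}
\leq\|u\|_{\fB^{s}_{p,1}}\sum_{k\geq1}|a_k|\big(C\|u\|_{\fB^{\frac3p}_{p,1}}\big)^{k-1}
\leq C'\|u\|_{\fB^{s}_{p,1}}
\end{align}
uniformly in $N$, so the series converges in $\fB^{s}_{p,1}$; comparing with convergence in $L^\infty$ (where the sum equals $F(u)$) identifies the limit with $F(u)$, which gives the first estimate. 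For the Lipschitz estimate I would use $u^k-v^k=(u-v)\sum_{j=0}^{k-1}u^{j}v^{k-1-j}$, bound the last factor in the algebra $\fB^{\frac3p}_{p,1}$ by $C^{k-2}k\rho^{k-1}$ with $\rho:=\max\{\|u\|_{\fB^{\frac3p}_{p,1}},\|v\|_{\fB^{\frac3p}_{p,1}}\}\leq c$, apply the product estimate once more to pull out $\|u-v\|_{\fB^{s}_{p,1}}$, and sum over $k$; the harmless polynomial factor $k$ does not change the radius of convergence, so $Cc<R_0$ still suffices.

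\textbf{Expected main obstacle.} The only genuinely delicate ingredient is the product estimate of Step~2 across the full range of $\sigma$, in particular for negative $\sigma$ near the endpoint $-\min\{3/p,3/p'\}$; this sharpness reflects the mapping properties of the remainder operator $\dot R$ on Fourier--Lebesgue spaces (equivalently, the threshold $\widetilde{s}<\min\{3,6/p\}$ in \lemref{keyproduct1}, a by-product of Young's inequality on the Fourier side). Everything else is bookkeeping of the geometric constant $C^k$ against $R_0$, which is exactly what the smallness $\|u\|_{\fB^{\frac3p}_{p,1}}\leq c$ with $Cc<R_0$ is designed to control.
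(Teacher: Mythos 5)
Your proposal is correct and matches the paper, which offers no proof of its own and simply cites \cite{MR4836084}*{Lemma 2.6}, exactly as your first line suggests; your from-scratch sketch is the standard power-series argument behind such composition estimates, and the reduction of the hypothesis on $s$ to the threshold $\widetilde{s}<\min\{3,6/p\}$ in \lemref{keyproduct1} is carried out correctly. One tiny bookkeeping remark: in Step 2 write $u^k=u\cdot u^{k-1}$ and estimate $\n{u^k}_{\fB^{s}_{p,1}}\leq C_s\n{u}_{\fB^{s}_{p,1}}\n{u^{k-1}}_{\fB^{3/p}_{p,1}}$, so that the $s$-dependent constant appears only once and all iterated factors use the algebra constant $C(p)$ of $\fB^{3/p}_{p,1}$; this keeps the smallness threshold $c$ depending only on $(p,F,R_0)$, as in the statement, rather than on $s$.
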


\section{Linear analysis}\label{sec:lin}
This section is devoted to establishing some energy and dispersive estimates for the solutions to the following linearized system which stems from \eqref{eq:CNSKC-2}:\begin{equation}\label{eq:CNSKC-3}
	\begin{cases}
		\partial_t a+\dfrac{1}{\varepsilon} \div m=0,
		& t>0, x\in \mathbb{R}^3,\\\begin{aligned}
			\partial_t m-\mu\Delta m-(\mu&+\lambda)\nabla\div m+\Omega ( e_3 \times m )\\&+\dfrac{1}{\varepsilon} \nabla a-\kappa\varepsilon\nabla\Delta a=f,
		\end{aligned}
		& t>0, x\in \mathbb{R}^3,\\
		a(0,x)=a_0(x),\quad m(0,x)=m_0(x), & x\in \mathbb{R}^3,
	\end{cases}
\end{equation}where $f$ is a given time-dependent vector field. First, we prove the following lemma on the energy estimates of the solutions to \eqref{eq:CNSKC-3}.
		\begin{lemm}\label{energylemma}
	Let $1\leq p,\sigma,r\leq\infty$, $s\in\mathbb{R}$ and $0<T\leq \infty$. There exists a positive constant $C=C(\mu,\kappa)$, independent of $\Omega$ and $\varepsilon$, such that the solution $(a,m)$ to \eqref{eq:CNSKC-3} satisfies 
    \begin{align}
	&\begin{aligned}\label{energyes1}
	    \|(a,\varepsilon \nabla a,m)\|_{\widetilde{L^{r}}(0,T;\widehat{\dot{B}}{}^{s+\frac{4}{r}}_{p,\sigma})}^{\ell;|\Omega|\varepsilon}&\leq C|\Omega|^{\frac{2}{r}}\varepsilon^{\frac{2}{r}}\|(a_0,\varepsilon \nabla a_0,m_0)\|_{\widehat{\dot{B}}{}^{s}_{p,\sigma}}^{\ell;|\Omega|\varepsilon}\\&\qquad+C|\Omega|^{\frac{2}{r}}\varepsilon^{\frac{2}{r}}\|f\|_{\widetilde{L^{1}}(0,T;\widehat{\dot{B}}{}^{s}_{p,\sigma})}^{\ell;|\Omega|\varepsilon},
	\end{aligned}		\\
            &\|(a,\varepsilon \nabla a,m)\|_{\widetilde{L^{r}}(0,T;\widehat{\dot{B}}{}^{s+\frac{2}{r}}_{p,\sigma})}^{h;|\Omega|\varepsilon}\leq C\|(a_0,\varepsilon \nabla a_0,m_0)\|_{\widehat{\dot{B}}{}^{s}_{p,\sigma}}^{h;|\Omega|\varepsilon}+C\|f\|_{\widetilde{L^{1}}(0,T;\widehat{\dot{B}}{}^{s}_{p,\sigma})}^{h;|\Omega|\varepsilon}.\label{energyes2}
	\end{align}
\end{lemm}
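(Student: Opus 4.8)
The plan is to localise in frequency, pass to the Fourier side, and reduce the entire statement to a \emph{pointwise-in-$\xi$} estimate for a linear ODE system; this is the natural strategy in the $\widehat{L^p}$ framework, since every Fourier multiplier occurring below (the Leray--Helmholtz projectors, the symmetriser) is pointwise bounded in $\xi$ and hence bounded on $\widehat{L^p}$ for free, the $\widehat{L^p}$-norm being the $L^{p'}$-norm of the Fourier transform. After applying $\dot{\Delta}_j$ and the Fourier transform, I would split $\hat m=\hat d\,\tfrac{\xi}{|\xi|}+\hat m_\perp$ into the compressible scalar part $\hat d:=\tfrac{\xi}{|\xi|}\cdot\hat m$ and the incompressible part $\hat m_\perp\perp\xi$, and, because of the capillary term, replace the density by the Korteweg-adapted variable $\hat A:=\sqrt{1+\kappa\varepsilon^2|\xi|^2}\,\hat a$, which satisfies $|\hat A|\sim_\kappa(1+\varepsilon|\xi|)|\hat a|=|\mathscr F(a,\varepsilon\nabla a)(\xi)|$. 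With the acoustic frequency $\omega(\xi):=\varepsilon^{-1}|\xi|\sqrt{1+\kappa\varepsilon^2|\xi|^2}$ (so that $\omega(\xi)\geq\max\{|\xi|/\varepsilon,\ \sqrt\kappa\,|\xi|^2\}$) and $\nu=2\mu+\lambda$, the system becomes
\[
\partial_t\hat A=-i\omega\hat d,\qquad
\partial_t\hat d=-\nu|\xi|^2\hat d-i\omega\hat A-\Omega\,\gamma(\xi)\!\cdot\!\hat m_\perp+\hat f_\parallel,\qquad
\partial_t\hat m_\perp=-\mu|\xi|^2\hat m_\perp+\Omega\,\gamma(\xi)\,\hat d+\Omega\,\mathcal R(\xi)\hat m_\perp+\hat f_\perp,
\]
where $\gamma(\xi)=\tfrac{\xi}{|\xi|}\times e_3$ obeys $|\gamma(\xi)|\leq1$, $\mathcal R(\xi)$ is a skew linear map on $\xi^\perp$, and the $\hat d$--$\hat m_\perp$ Coriolis coupling is skew-symmetric.

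The core of the proof is a hypocoercive energy estimate for this ODE, carried out pointwise in $\xi$. The plain energy $\mathcal E_0:=|\hat A|^2+|\hat m|^2$ satisfies $\tfrac{d}{dt}\mathcal E_0=-2\nu|\xi|^2|\hat d|^2-2\mu|\xi|^2|\hat m_\perp|^2+(\text{terms linear in }\hat f)$, so it detects no dissipation of the density $\hat A$. To produce one, I would use the modified energy $\mathcal E:=\mathcal E_0-\eta(\xi)\,\mathrm{Im}(\hat A\,\overline{\hat d})$, for which
\[
\tfrac{d}{dt}\big[-\mathrm{Im}(\hat A\,\overline{\hat d})\big]=\omega|\hat d|^2-\omega|\hat A|^2+\nu|\xi|^2\,\mathrm{Im}(\hat A\,\overline{\hat d})+\Omega\,\mathrm{Im}\big(\hat A\,\overline{\gamma(\xi)\!\cdot\!\hat m_\perp}\big)+(\hat f\text{-terms}).
\]
The useful term is $-\omega|\hat A|^2$; keeping the $|\hat d|^2$-coefficient negative forces $\eta\omega\lesssim\nu|\xi|^2$, and — the crucial point — the Coriolis cross-term of size $\eta|\Omega|\,|\hat A|\,|\hat m_\perp|$ must be absorbed into $\mu|\xi|^2|\hat m_\perp|^2+\tfrac14\eta\omega|\hat A|^2$, which caps $\eta$ at $\mu|\xi|^2\omega/\Omega^2$; finally $\eta\leq1$ ensures $\mathcal E\simeq\mathcal E_0$. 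Choosing $\eta(\xi)$ to be $c(\mu,\kappa)$ times the minimum of these quantities leads to a pointwise differential inequality $\tfrac{d}{dt}\mathcal E\leq -c\,\lambda(\xi)\,\mathcal E+C\sqrt{\mathcal E}\,|\hat f|$, where $\lambda(\xi)$ is a minimum of a few quantities built from $|\xi|^2$, $\omega(\xi)$ and $|\xi|^2\omega(\xi)^2/\Omega^2$; using $\omega(\xi)\geq|\xi|/\varepsilon$, $\omega(\xi)\geq\sqrt\kappa\,|\xi|^2$ and the standing hypothesis $|\Omega|\varepsilon\leq c_0\ll1$, elementary case distinctions give $\lambda(\xi)\gtrsim|\xi|^4/(\Omega^2\varepsilon^2)$ for $|\xi|\leq|\Omega|\varepsilon$ and $\lambda(\xi)\gtrsim c(\mu,\kappa)\,|\xi|^2$ for $|\xi|>|\Omega|\varepsilon$ — this is the ``fourth-order heat kernel on low frequencies'' announced after \thmref{thm:large}.

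Since $\sqrt{\mathcal E}\sim_\kappa|\mathscr F(a,\varepsilon\nabla a,m)(\xi)|$ pointwise, Gr\"{o}nwall's inequality converts the differential inequality into
\[
|\mathscr F(a,\varepsilon\nabla a,m)(t,\xi)|\leq Ce^{-c\lambda(\xi)t}|\mathscr F(a_0,\varepsilon\nabla a_0,m_0)(\xi)|+C\int_0^te^{-c\lambda(\xi)(t-s)}|\hat f(s,\xi)|\,ds.
\]
I would then multiply by $\widehat{\phi_j}$, take $\|\cdot\|_{L^{p'}_\xi}$ (Minkowski inside the time integral), then $\|\cdot\|_{L^r(0,T)}$ (Young's inequality, using $\|e^{-c\lambda_j\cdot}\|_{L^r(0,T)}\leq C\lambda_j^{-1/r}$ with $\lambda_j\sim\lambda(2^j)$), and finally $\|\cdot\|_{\ell^\sigma_j}$. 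On the low frequencies $2^j\leq|\Omega|\varepsilon$ one has $2^{4j/r}\lambda_j^{-1/r}\lesssim(|\Omega|\varepsilon)^{2/r}=|\Omega|^{2/r}\varepsilon^{2/r}$, which reproduces \eqref{energyes1}; on the high frequencies $2^j>|\Omega|\varepsilon$ one has $2^{2j/r}\lambda_j^{-1/r}\leq C(\mu,\kappa)$, which reproduces \eqref{energyes2} with a constant depending only on $\mu$ and $\kappa$.

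The main obstacle is concentrated in the second step. The fast rotation couples the compressible part $\hat d$ of the momentum to its incompressible part $\hat m_\perp$, and it is precisely this coupling that caps the corrector strength $\eta(\xi)$ at $\mu|\xi|^2\omega(\xi)/\Omega^2$ and thereby degrades the naively expected second-order dissipation into the fourth-order rate $|\xi|^4/(\Omega^2\varepsilon^2)$ on low frequencies; symmetrically, one must exploit the Korteweg enhancement $\omega(\xi)\gtrsim\sqrt\kappa\,|\xi|^2$ to keep the high-frequency rate $\gtrsim|\xi|^2$ with a constant free of $\Omega$ and $\varepsilon$ (which is also what makes it possible to take the third index equal to $1$ in the auxiliary space, in contrast with the Korteweg-free analysis of \cite{fujii2024global}). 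Everything else — the frequency decomposition, the symmetrisation, and the summation in the Chemin--Lerner norms — is routine, the only nuisance being to keep track of the exact powers of $|\Omega|\varepsilon$ throughout.
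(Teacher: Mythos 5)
Your proposal is correct in substance and follows essentially the same route as the paper: a pointwise-in-$\xi$ hypocoercive energy estimate for the Fourier-transformed system, Gr\"onwall giving decay at the rate $\lambda(\xi)\sim|\xi|^4/(\Omega^2\varepsilon^2+|\xi|^2)$, and then multiplication by $\widehat{\phi_j}$, Young's inequality in time (yielding the factor $\lambda_j^{-1/r}$) and $\ell^\sigma$-summation. The only implementation difference is cosmetic: the paper keeps $(\widehat{a},\varepsilon i\xi\widehat{a},\widehat{m})$ and uses the functional $\widehat{V}^2=(\Omega^2\varepsilon^2+|\xi|^2)\bigl(|\widehat{a}|^2+|\widehat{m}|^2+\kappa|\varepsilon i\xi\widehat{a}|^2\bigr)+2\eta|\xi|^2\Re\langle\varepsilon i\xi\widehat{a},\widehat{m}\rangle$ with a \emph{constant} $\eta$, the $(\Omega^2\varepsilon^2+|\xi|^2)$-weight playing exactly the role of your frequency-dependent corrector strength $\eta(\xi)$ and supplying the $\Omega^2\varepsilon^2|\xi|^2|\widehat{m}|^2$ dissipation that absorbs the Coriolis cross term, whereas you diagonalise $\widehat{m}$ into compressible/incompressible parts and cap $\eta(\xi)$ by a minimum of three quantities; both devices give the same rate. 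One point must be fixed: you invoke ``the standing hypothesis $|\Omega|\varepsilon\leq c_0\ll1$'', which is not available here — the lemma asserts the bounds for all $\Omega\neq0$, $\varepsilon>0$ with $C=C(\mu,\kappa)$, and it is applied in the proof of Lemma \ref{dispersivelemma} without any restriction on $|\Omega|\varepsilon$. Fortunately the hypothesis is superfluous in your own case analysis: on the low frequencies $|\xi|\leq|\Omega|\varepsilon$ the three capped values satisfy $\nu|\xi|^2\geq\nu|\xi|^4/(\Omega^2\varepsilon^2)$, $\mu|\xi|^2\omega^2/\Omega^2\geq\mu|\xi|^4/(\Omega^2\varepsilon^2)$ and, by the Korteweg enhancement, $\omega\geq\sqrt{\kappa}\,|\xi|^2\geq\sqrt{\kappa}\,|\xi|^4/(\Omega^2\varepsilon^2)$, while on the high frequencies $\omega^2/\Omega^2\geq|\xi|^2/(\Omega^2\varepsilon^2)>1$ and $\omega\geq\sqrt{\kappa}|\xi|^2$ already give $\lambda(\xi)\gtrsim_{\mu,\kappa}|\xi|^2$; so simply delete that appeal. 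Finally, absorbing the corrector term $\eta\,\nu|\xi|^2\,\mathrm{Im}(\widehat{A}\overline{\widehat{d}})$ requires the additional cap $\eta\,\nu|\xi|^2\lesssim\omega$ besides $\eta\omega\lesssim\nu|\xi|^2$; this is again harmless because $\omega\geq\sqrt{\kappa}|\xi|^2$ makes it follow from $\eta\lesssim1$, but it should be stated when you fix the constant $c(\mu,\kappa)$ in the choice of $\eta(\xi)$.
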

\begin{proof}
	Applying the Fourier transform to \eqref{eq:CNSKC-3}, one has
    \begin{equation}\label{eq:CNSKC-4}
		\begin{cases}
			\partial_t \widehat{a}+\dfrac{1}{\varepsilon} i\xi\cdot \widehat{m}=0,\\
				\partial_t \widehat{m}+\mu|\xi|^2 \widehat{m}+(\mu+\lambda)\xi(\xi\cdot \widehat{m})+\Omega ( e_3 \times \widehat{m} )+\dfrac{1}{\varepsilon} i\xi \widehat{a}+\kappa|\xi|^2(\varepsilon i\xi\widehat{a})=\widehat{f},\\
			\widehat{a}(0,\xi)=\widehat{a_0}(\xi),\quad \widehat{m}(0,\xi)=\widehat{m_0}(\xi).
		\end{cases}
	\end{equation}
Let $\Re z$ and $\overline{z}$ denote the real part and the complex conjugate of a complex number $z$ respectively, and let $\langle \cdot,\cdot\rangle$ denote the inner product in $\mathbb{C}^3$ (that is $\langle Z,Y\rangle=z_1\overline{y_1}+z_2\overline{y_2}+z_3\overline{y_3}$ for $Z=(z_1,z_2,z_3),Y=(y_1,y_2,y_3)\in\mathbb{C}^3$).
Multiplying the first equation of \eqref{eq:CNSKC-4} by $\overline{\widehat{a}}$, taking the $\mathbb{C}^3$-inner product of the second equation with $\widehat{m}$, and then summing up the resulting equalities and taking the real part of the sum, we get
\begin{equation}\label{energy1}
	\begin{split}
		\frac{1}{2}\frac{d}{dt}\left(|\widehat{a}|^2+|\widehat{m}|^2\right)+\underline{\mu}|\xi|^2|\widehat{m}|^2+\kappa|\xi|^2\Re \langle\varepsilon i\xi\widehat{a},\widehat{m}\rangle\leq |\widehat{f}||\widehat{m}|,
	\end{split}
\end{equation}
where $\underline{\mu}:=\min\{\mu,1\}$ and we have used the fact that
\begin{align}
		&\Re \left( i(\xi\cdot \widehat{m})\overline{\widehat{a}}+\langle i\xi \widehat{a},\widehat{m}\rangle\right)=0,
        \\
        &\langle e_3 \times \widehat{m},\widehat{m}\rangle=0,\\
        &\mu|\xi|^2 |\widehat{m}|^2+(\mu+\lambda)|\xi\cdot \widehat{m}|^2\geq \underline{\mu}|\xi|^2|\widehat{m}|^2.
\end{align}
Multiplying the first equation of \eqref{eq:CNSKC-4} by $\varepsilon i\xi$ gives\begin{equation}\label{epixia}
	\begin{split}
		\varepsilon i\xi\partial_t \widehat{a}-(\xi\cdot \widehat{m})\xi=0.
	\end{split}
\end{equation}Taking the $\mathbb{C}^3$-inner product of \eqref{epixia} with $\varepsilon i\xi \widehat{a}$ and then taking the real part of the resulting equality, we obtain\begin{equation}\label{energy2}
	\begin{split}
		\frac{1}{2}\frac{d}{dt}|\varepsilon i\xi\widehat{a}|^2-|\xi|^2\Re \langle\varepsilon i\xi\widehat{a},\widehat{m}\rangle=0,
	\end{split}
\end{equation}where we have used the fact that\begin{equation}\label{jianchi123}
    \langle(\xi\cdot \widehat{m})\xi,\varepsilon i\xi \widehat{a}\rangle=|\xi|^2\langle\widehat{m},\varepsilon i\xi\widehat{a}\rangle.
\end{equation} 
Next, taking the $\mathbb{C}^3$-inner product of \eqref{epixia} and the second equation of \eqref{eq:CNSKC-4} with $\widehat{m}$ and $\varepsilon i\xi \widehat{a}$ respectively, we have\begin{equation}\label{epixia888}
	\begin{split}
		&\langle\varepsilon i\xi\partial_t \widehat{a},\widehat{m}\rangle-|\xi\cdot \widehat{m}|^2=0,\\
		&\langle\partial_t \widehat{m},\varepsilon i\xi \widehat{a}\rangle +|\xi|^2\langle\widehat{m},\varepsilon i\xi\widehat{a}\rangle+\Omega\varepsilon \langle e_3 \times \widehat{m},i\xi \widehat{a}\rangle\\
        &\qquad\qquad\qquad\qquad+|\xi|^2| \widehat{a}|^2+\kappa|\xi|^2|\varepsilon i\xi\widehat{a}|^2=\langle\widehat{f},\varepsilon i\xi \widehat{a}\rangle,
	\end{split}
\end{equation}where we have used \eqref{jianchi123} again.
Adding up the two equalities in \eqref{epixia888} and taking the real part of the sum leads to\begin{equation}\label{energy3}
	\begin{split}
		&\partial_t\Re \langle\varepsilon i\xi \widehat{a},\widehat{m}\rangle+|\xi|^2| \widehat{a}|^2+\kappa|\xi|^2|\varepsilon i\xi\widehat{a}|^2\\
		&\leq |\widehat{f}||\varepsilon i\xi \widehat{a}|+|\xi|^2|\widehat{m}||\varepsilon i\xi\widehat{a}|+|\Omega|\varepsilon |\xi||\widehat{m}||\widehat{a}|+|\xi\cdot m|^2.
	\end{split}
\end{equation}
Let $\eta:=\min\{\frac{1}{2},\frac{\kappa}{2},\frac{\kappa\underline{\mu}}{2},\frac{\underline{\mu}}{4},\sqrt{\kappa}\}>0$ and define
a positive valued function $\widehat{V}=\widehat{V}(t,\xi)$ by
\begin{equation}\label{Vdef}
	\begin{split}
        \widehat{V}^2=\widehat{V}^2(t,\xi)
        :=
        \left(
        \Omega\varepsilon^2+|\xi|^2
        \right)
        \left(
        |\widehat{a}|^2+|\widehat{m}|^2+\kappa|\varepsilon i\xi\widehat{a}|^2
        \right)
        +
        2\eta |\xi|^2\Re \langle\varepsilon i\xi \widehat{a},\widehat{m}\rangle.
	\end{split}
\end{equation}
Hence, it follows from \eqref{energy1}, \eqref{energy2} and \eqref{energy3} that\begin{equation}\label{energy4}
	\begin{split}
		&\frac{1}{2}\frac{d}{dt}\widehat{V}^2+\underline{\mu}\Omega^2\varepsilon^2|\xi|^2|\widehat{m}|^2+\underline{\mu}|\xi|^4|\widehat{m}|^2+\eta|\xi|^4|\widehat{a}|^2+\eta\kappa|\xi|^4|\varepsilon i\xi\widehat{a}|^2\\
		&\leq \left(\Omega^2\varepsilon^2+|\xi|^2\right)|\widehat{f}||\widehat{m}|+\eta |\xi|^2|\widehat{f}||\varepsilon i\xi \widehat{a}|+\eta|\xi|^4|\widehat{m}||\varepsilon i\xi\widehat{a}|\\&\qquad+\eta|\Omega|\varepsilon |\xi|^3|\widehat{m}||\widehat{a}|+\eta|\xi|^2|\xi\cdot m|^2.
	\end{split}
\end{equation}
Using the Young inequality, we find from \eqref{Vdef} that\begin{equation}
	\begin{split}
		&\left|\widehat{V}^2-\left(\Omega^2\varepsilon^2+|\xi|^2\right)|(\widehat{a},\sqrt{\kappa}\varepsilon i\xi\widehat{a},\widehat{m})|^2\right|\\
        &
        \quad\leq 
        2\eta |\xi|^2|\varepsilon i\xi\widehat{a}||\widehat{m}|
        \\
        &
        \quad\leq
        \eta|\xi|^2|(\widehat{m},\varepsilon i\xi\widehat{a})|^2
        \\
        &
        \quad\leq 
        \frac{1}{2}|\xi|^2|(\widehat{m},\sqrt{\kappa}\varepsilon i\xi\widehat{a})|^2,
	\end{split}
\end{equation}which implies 
\begin{equation}\label{energy5}
	\begin{split}
	\frac{1}{2}\left(\Omega^2\varepsilon^2+|\xi|^2\right)|(\widehat{a},\sqrt{\kappa}\varepsilon i\xi\widehat{a},\widehat{m})|^2	\leq\widehat{V}^2\leq\frac{3}{2}\left(\Omega^2\varepsilon^2+|\xi|^2\right)|(\widehat{a},\sqrt{\kappa}\varepsilon i\xi\widehat{a},\widehat{m})|^2.
	\end{split}
\end{equation}
Moreover, we have
\begin{equation}\label{energy6}
	\begin{split}
		\eta|\xi|^4|\widehat{m}||\varepsilon i\xi\widehat{a}|&\leq  \frac{\eta}{2\kappa}|\xi|^4|\widehat{m}|^2+\frac{\eta\kappa}{2}|\xi|^4|\varepsilon i\xi\widehat{a}|^2\\
		&\leq  \frac{\underline{\mu}}{4}|\xi|^4|\widehat{m}|^2+\frac{\eta\kappa}{2}|\xi|^4|\varepsilon i\xi\widehat{a}|^2,\\
		\eta|\Omega|\varepsilon |\xi|^3|\widehat{m}||\widehat{a}|&\leq \frac{\eta}{2}\Omega^2\varepsilon^2|\xi|^2|\widehat{m}|^2 +\frac{\eta}{2}|\xi|^4|\widehat{a}|^2\\
		&\leq \frac{\underline{\mu}}{8}\Omega^2\varepsilon^2|\xi|^2|\widehat{m}|^2 +\frac{\eta}{2}|\xi|^4|\widehat{a}|^2,\\
		\eta|\xi|^2|\xi\cdot m|^2&\leq \frac{\underline{\mu}}{4}|\xi|^4|\widehat{m}|^2.
	\end{split}
\end{equation}
As a result, we can deduce from \eqref{energy4}, \eqref{energy5} and \eqref{energy6} that\begin{equation}
	\begin{split}
		\frac{1}{2}\frac{d}{dt}\widehat{V}^2+\frac{\eta}{2}|\xi|^4|(\widehat{a},\sqrt{\kappa}\varepsilon i\xi\widehat{a},\widehat{m})|^2\leq \left(\Omega^2\varepsilon^2+|\xi|^2\right)|\widehat{f}||(\widehat{m},\sqrt{\kappa}\varepsilon i\xi\widehat{a})|,
	\end{split}
\end{equation}which leads to\begin{equation}
\begin{split}
\frac{1}{2}\frac{d}{dt}\widehat{V}^2+\frac{\eta}{3}\Theta(|\xi|,\Omega\varepsilon)\widehat{V}^2\leq 2\sqrt{\Omega^2\varepsilon^2+|\xi|^2}|\widehat{f}|\widehat{V},
\end{split}
\end{equation}
where\begin{equation}
	\begin{split}
		\Theta(|\xi|,\Omega\varepsilon):=\frac{|\xi|^4}{\Omega^2\varepsilon^2+|\xi|^2}.
	\end{split}
\end{equation}
Therefore, we arrive at\begin{equation}\label{energy7}
	\begin{split}
		\widehat{V}(t,\xi)\leq e^{-\frac{\eta}{3}\Theta(|\xi|,\Omega\varepsilon)t}\widehat{V}(0,\xi)+2\sqrt{\Omega^2\varepsilon^2+|\xi|^2}\int_{0}^{t}e^{-\frac{\eta}{3}\Theta(|\xi|,\Omega\varepsilon)(t-\tau)}|\widehat{f}(\tau,\xi)|d\tau.
	\end{split}
\end{equation}
Combining \eqref{energy5} and \eqref{energy7}, we obtain\begin{equation}\label{hkjsll}
	\begin{split}
		|(\widehat{a},\varepsilon i\xi\widehat{a},\widehat{m})(t,\xi)|&\leq Ce^{-\frac{\eta}{3}\Theta(|\xi|,\Omega\varepsilon)t}|(\widehat{a_0},\varepsilon i\xi\widehat{a_0},\widehat{m_0})(\xi)|\\
		&\quad+C\int_{0}^{t}e^{-\frac{\eta}{3}\Theta(|\xi|,\Omega\varepsilon)(t-\tau)}|\widehat{f}(\tau,\xi)|d\tau.
	\end{split}
\end{equation}
Multiplying the inequality above by $\widehat{\phi_j}(\xi)$ and taking the $L^r(0,T;L^{p^\prime}(\mathbb{R}^3_\xi))$-norm, we have\begin{equation}
	\begin{split}
		&\|(\dot{\Delta}_ja,\varepsilon\dot{\Delta}_j\nabla a,\dot{\Delta}_jm)\|_{L^r(0,T;\widehat{L^p})}\\
		&\leq C(\Omega^2\varepsilon^22^{-4j}+2^{-2j})^{\frac{1}{r}} \|(\dot{\Delta}_ja_0,\varepsilon\dot{\Delta}_j\nabla a_0,\dot{\Delta}_jm_0)\|_{\widehat{L^p}}\\
		  &\qquad+C(\Omega^2\varepsilon^22^{-4j}+2^{-2j})^{\frac{1}{r}}\|\dot{\Delta}_jf\|_{L^1(0,T;\widehat{L^p})}.
	\end{split}
\end{equation}
From the inequality above, we get\begin{equation}\label{cmxw1}
	\begin{split}
		&2^{j(s+\frac{4}{r})}\|(\dot{\Delta}_ja,\varepsilon\dot{\Delta}_j\nabla a,\dot{\Delta}_jm)\|_{L^r(0,T;\widehat{L^p})}\\
		&\leq C|\Omega|^{\frac{2}{r}}\varepsilon^{\frac{2}{r}}2^{js} \|(\dot{\Delta}_ja_0,\varepsilon\dot{\Delta}_j\nabla a_0,\dot{\Delta}_jm_0)\|_{\widehat{L^p}}+C|\Omega|^{\frac{2}{r}}\varepsilon^{\frac{2}{r}}2^{js}\|\dot{\Delta}_jf\|_{L^1(0,T;\widehat{L^p})}
	\end{split}
\end{equation}for all $j$ with $2^j\leq|\Omega|\varepsilon$, and \begin{equation}\label{cmxw2}
\begin{split}
	&2^{j(s+\frac{2}{r})}\|(\dot{\Delta}_ja,\varepsilon\dot{\Delta}_j\nabla a,\dot{\Delta}_jm)\|_{L^r(0,T;\widehat{L^p})}\\
	&\leq C2^{js} \|(\dot{\Delta}_ja_0,\varepsilon\dot{\Delta}_j\nabla a_0,\dot{\Delta}_jm_0)\|_{\widehat{L^p}}+C2^{js}\|\dot{\Delta}_jf\|_{L^1(0,T;\widehat{L^p})}
\end{split}
\end{equation}for all $j$ with $2^j>|\Omega|\varepsilon$. Thus, taking $\ell^\sigma$-norm of \eqref{cmxw1} and \eqref{cmxw2} on $\{j\in\mathbb{Z};\,2^j\leq|\Omega|\varepsilon\}$ and $\{j\in\mathbb{Z};\,2^j>|\Omega|\varepsilon\}$ respectively, we end up with the desired estimates \eqref{energyes1} and \eqref{energyes2}. This completes the proof.
\end{proof}
Next, we focus on the dispersive estimates for the high frequencies of the solutions to \eqref{eq:CNSKC-3}.
\begin{lemm}\label{dispersivelemma}
Let $0<T\leq \infty$, $s\in\mathbb{R}$, $1\leq\sigma\leq\infty$ and	$2\leq p,q,r\leq\infty$ satisfy 
    \begin{equation}
		\begin{split}
		2\leq p\leq q<\infty,\quad 0\leq 
			\frac{1}{r}\leq\frac{1}{p}-\frac{1}{q}.
		\end{split}
	\end{equation}
    There exists a positive constant $C=C(p,q,r,\mu,\kappa)$, independent of $\Omega$ and $\varepsilon$, such that the solution $(a,m)$ to \eqref{eq:CNSKC-3} satisfies
    \begin{equation}\label{lineardispersive2}
	\begin{split}
		\|(a,m)\|_{\widetilde{L^{r}}(0,T;\dot{B}^{\frac{3}{q}+s}_{q,\sigma})}^{h;|\Omega|\varepsilon}&\leq C|\Omega|^{-\frac{1}{r}}\left(\|(a_0,\varepsilon\nabla a_0,m_0)\|_{\widehat{\dot{B}}{}^{\frac{3}{p}+s}_{p,\sigma}}^{h;|\Omega|\varepsilon}+\|f\|_{\widetilde{L^1}(0,T;\widehat{\dot{B}}{}^{\frac{3}{p}+s}_{p,\sigma})}^{h;|\Omega|\varepsilon}\right).
	\end{split}
\end{equation}
\end{lemm}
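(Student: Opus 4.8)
The plan is to adapt to the Fourier--Lebesgue setting the linear analysis of \cite{MR4865748} for the non-Korteweg system (compare also the two-dimensional treatment in \cite{MR4836084}): diagonalize the Fourier symbol of \eqref{eq:CNSKC-3} on the high-frequency annuli, extract an oscillatory semigroup whose convolution kernel disperses, and feed the resulting dispersive bound, together with the energy estimate of \lemref{energylemma}, into an interpolation/$TT^{*}$-type argument. First I would reduce, via Duhamel's formula and the Minkowski inequality (exactly as in the proof of \lemref{energylemma}), to the homogeneous evolution; the inhomogeneous term then reproduces the same factor $|\Omega|^{-1/r}$ after an integration in the time variable. Fixing $j$ with $2^{j}>|\Omega|\varepsilon$ and applying $\dot{\Delta}_{j}$, I would diagonalize the $4\times4$ symbol of \eqref{eq:CNSKC-4} on $\{|\xi|\sim2^{j}\}$: using the curl-free/divergence-free splitting of $\widehat{m}$ together with the helical frame adapted to $e_{3}\times(\cdot)$, one checks that in this regime the four eigenvalues $\lambda(\xi)$ are simple, with $\operatorname{Re}\lambda(\xi)\le-c\mu|\xi|^{2}$ and $\operatorname{Im}\lambda(\xi)=\pm\Phi(\xi)$, where the phase $\Phi$ combines the acoustic--Korteweg frequency $|\xi|\varepsilon^{-1}\sqrt{1+\kappa\varepsilon^{2}|\xi|^{2}}$ with the Coriolis frequency $\Omega\,\xi_{3}/|\xi|$; moreover the associated spectral projectors are Fourier multipliers bounded on $\widehat{L^{p}}$ and on $L^{q}$, uniformly in $(\Omega,\varepsilon)$, on that annulus. (Carrying $\varepsilon\nabla a$ on the right-hand side of \eqref{lineardispersive2}, rather than only $a$ and $m$, is precisely what is needed to absorb the low-order entries of these projectors.) This reduces the claim to a dispersive Strichartz estimate for $e^{-c\mu 2^{2j}t}e^{it\Phi(D)}$ applied to a function localized at frequency $|\xi|\sim2^{j}$.

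The heart of the matter is a pointwise-in-time dispersive estimate for the kernel $K_{j,t}:=\mathscr{F}^{-1}[e^{it\Phi(\xi)}\widehat{\phi_{j}}(\xi)]$, of the schematic form
\[
\|K_{j,t}\|_{L^{\infty}_{x}}\le C\,2^{3j}\,(1+|\Omega|\,t)^{-\theta},\qquad \theta=\theta(p,q)>0,
\]
obtained by the stationary phase method after the rescaling $\xi=2^{j}\eta$. Its essential input is a quantitative non-degeneracy of $\Phi$: on the rescaled annulus $|\eta|\sim1$ the Hessian of $\Phi(2^{j}\,\cdot\,)$ carries an eigenvalue of size $\gtrsim|\Omega|$, this curvature being supplied by the Coriolis term $\Omega\eta_{3}/|\eta|$ (which, being homogeneous of degree zero, also produces curvature in the radial direction that the acoustic--Korteweg term, homogeneous of degree one when $2^{j}\lesssim1/\varepsilon$, lacks), uniformly over the admissible range $1\ll|\Omega|\ll1/\varepsilon$; the non-stationary part is disposed of by integration by parts. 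Interpolating this $L^{1}_{x}\to L^{\infty}_{x}$ bound with the trivial $L^{2}_{x}\to L^{2}_{x}$ estimate, and combining Young's inequality, the Hausdorff--Young and Bernstein (\lemref{berstein}) inequalities, and the $\widehat{L^{p}}$-energy estimate of \lemref{energylemma}, yields, for $2\le p\le q<\infty$, a frequency-localized bound of the form
\[
\|e^{-c\mu 2^{2j}t}e^{it\Phi(D)}\dot{\Delta}_{j}g\|_{L^{q}_{x}}\le C\,2^{3j(\frac1p-\frac1q)}e^{-\frac{c\mu}{2}2^{2j}t}(1+|\Omega|t)^{-\theta}\,\|\dot{\Delta}_{j}g\|_{\widehat{L^{p}}}.
\]

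It then remains to take the $L^{r}(0,T)$-norm in time and the $\ell^{\sigma}$-norm over the high-frequency indices. Because $q<\infty$ forces $\frac1r\le\frac1p-\frac1q<\frac12$, hence $r>2$, the integral $\int_{0}^{\infty}(1+|\Omega|t)^{-\theta r}\,dt$ converges (with the viscous factor $e^{-c\mu 2^{2j}t/2}$ kept in reserve for any borderline case), and the change of variables $t\mapsto|\Omega|t$ extracts the factor $|\Omega|^{-1/r}$. One thus obtains, for each $j$ with $2^{j}>|\Omega|\varepsilon$,
\[
2^{j(\frac3q+s)}\|\dot{\Delta}_{j}(a,m)\|_{L^{r}(0,T;L^{q}_{x})}\le C\,|\Omega|^{-1/r}\,2^{j(\frac3p+s)}\|\dot{\Delta}_{j}(a_{0},\varepsilon\nabla a_{0},m_{0})\|_{\widehat{L^{p}}},
\]
and taking the $\ell^{\sigma}$-norm over $\{j:2^{j}>|\Omega|\varepsilon\}$ (legitimate for the Chemin--Lerner norm $\widetilde{L^{r}}$) and restoring the Duhamel contribution gives \eqref{lineardispersive2}.

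The step I expect to be the main obstacle is the curvature lower bound for the combined phase $\Phi$: one must show that the Hessian of $\Phi$ cannot degenerate completely on $\{|\xi|\sim2^{j}\}$, with a rate proportional to $|\Omega|$, uniformly both as $|\Omega|\to\infty$ and as $\varepsilon\to0$. This is where the high-frequency restriction $2^{j}>|\Omega|\varepsilon$ (equivalently, $|\Omega|\varepsilon$ small) enters: it keeps the acoustic--Korteweg branch of the symbol separated from the Coriolis branch, so that no eigenvalue crossing spoils the symbol calculus, while the Coriolis-generated curvature remains in force. This is the genuinely new ingredient, absent from the purely energetic estimate of \lemref{energylemma}.
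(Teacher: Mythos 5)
Your overall architecture (frequency-localized dispersive estimate from the Coriolis--acoustic oscillation, then interpolation and time integration) is in the right spirit, but there is a genuine gap at the pivotal intermediate claim: the fixed-time bound
\[
\|e^{-c\mu 2^{2j}t}e^{it\Phi(D)}\dot{\Delta}_{j}g\|_{L^{q}_{x}}\le C\,2^{3j(\frac1p-\frac1q)}e^{-\frac{c\mu}{2}2^{2j}t}(1+|\Omega|t)^{-\theta}\,\|\dot{\Delta}_{j}g\|_{\widehat{L^{p}}},\qquad \theta>0,
\]
cannot hold. The Fourier--Lebesgue norm is invariant under the free oscillatory evolution, since $|e^{it\Phi(\xi)}\widehat{g}(\xi)|=|\widehat{g}(\xi)|$; taking the back-propagated datum $g=e^{-it\Phi(D)}h$ with $\widehat{h}=\widehat{\phi_j}$ gives $\|g\|_{\widehat{L^p}}=\|h\|_{\widehat{L^p}}\sim 2^{3j/p'}$ while $e^{it\Phi(D)}g=h$ has $\|h\|_{L^q}\sim 2^{3j/q'}$, so your inequality forces $(1+|\Omega|t)^{\theta}\le Ce^{c\mu 2^{2j}t/2}$ for all $t$; at $t\sim \mu^{-1}2^{-2j}$ this fails whenever $|\Omega|\gg 2^{2j}$, i.e. precisely on the nonempty range $|\Omega|\varepsilon<2^j\ll|\Omega|^{1/2}$ that the lemma must cover (recall $|\Omega|\varepsilon\le 1\ll|\Omega|$). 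This is not a technicality: pointwise-in-time decay estimates require a data norm on the $L^{p'}$, $p'\le 2$, physical side (as in the $L^1\to L^\infty$ kernel bound); with $\widehat{L^p}$ data, $p\ge 2$, no dispersive gain survives at fixed time, and your attempted Young/Hausdorff--Young/Bernstein reduction indeed runs into the wrong-direction exponents ($m<2$ for the kernel, where only non-decaying bounds are available). The same invariance also shows why your subsequent change of variables producing $|\Omega|^{-1/r}$ has no valid starting point.

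The way the paper (and the works it builds on) circumvents this is to extract the $|\Omega|^{-1/r}$ gain only after time integration and to interpolate \emph{mixed space--time} estimates rather than fixed-time ones: for the inviscid rotating--acoustic propagator $U_{\Omega,\varepsilon}(t)$ one has, on the one hand, the trivial $L^\infty_t L^q_x$ bound $\|\dot{\Delta}_jU_{\Omega,\varepsilon}(t)(b_0,v_0)\|_{L^\infty_tL^q}\le\|\dot{\Delta}_j(b_0,v_0)\|_{\widehat{L^q}}$ from Fourier-pointwise energy conservation plus Hausdorff--Young, and on the other hand the frequency-localized Strichartz estimate $\|\dot{\Delta}_jU_{\Omega,\varepsilon}(t)(b_0,v_0)\|_{L^{r_1}_tL^q}\le C2^{3(\frac12-\frac1q)j}|\Omega|^{-\frac1{r_1}}\|\dot{\Delta}_j(b_0,v_0)\|_{L^2}$ for $2^j>|\Omega|\varepsilon$ (taken from earlier work, where the stationary-phase/curvature analysis you worry about is carried out); complex interpolation of these two \emph{space--time} bounds yields the $\widehat{L^p}$-data estimate \eqref{stries}, which is the substitute for your fixed-time claim. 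Note also that the paper does not put the viscosity or the Korteweg term into the propagator: they are treated as forcing via Duhamel for $U_{\Omega,\varepsilon}$, and the resulting $\widetilde{L^1}$-in-time terms $\|m\|$ and $\|\varepsilon\nabla a\|$ at two extra derivatives are absorbed by the energy estimate \eqref{energyes2} of \lemref{energylemma} --- this, rather than any projector bookkeeping, is why $\varepsilon\nabla a_0$ appears on the right-hand side of \eqref{lineardispersive2}. Your plan to diagonalize the full viscous--Korteweg--Coriolis symbol with uniformly bounded spectral projectors is an additional delicate point (eigenvalue crossings in the intermediate frequency range), but the decisive defect is the fixed-time $\widehat{L^p}$ dispersive estimate above.
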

\begin{proof}
	We first consider the following inviscid linear system:\begin{equation}
		\begin{cases}
			\partial_t b+\dfrac{1}{\varepsilon}\div v=0,\\
			\partial_t v+\Omega(e_3\times v)+\dfrac{1}{\varepsilon}\nabla b=0,\\
		{b}(0,x)={b_0}(x),\quad v(0,x)={v_0}(x).
		\end{cases}
	\end{equation}	
	Denote by $\Mp{U_{\Omega,\varepsilon}(t)}_{t>0}$ the semigroup associated with the system above, and then, its solution $(b(t),v(t))^\top=U_{\Omega,\varepsilon}(t)(b_0,v_0)^\top$.
	Following the proof of \eqref{energy1}, we obtain\begin{equation}
		\begin{split}
			\frac{1}{2}\frac{d}{dt}|(\widehat{b}(t,\xi),\widehat{v}(t,\xi))|^2=0,
		\end{split}
	\end{equation}which together with the Hausdorff-Young inequality implies\begin{equation}\label{cgzynld1}
	\begin{split}
	\|\dot{\Delta}_jU_{\Omega,\varepsilon}(t)( b_0,v_0)^{\top}\|_{L^\infty(0,T;L^q)}
    &\leq 	
    \|\widehat{\phi_j}(\xi)(\widehat{b}(t,\xi),\widehat{v}(t,\xi))\|_{L^\infty(0,T;L^{q^\prime})}\\
	&\leq 
    \|\widehat{\phi_j}(\xi)(\widehat{b_0}(\xi),\widehat{v_0}(\xi))\|_{L^{q^\prime}}
    \\
    &= 
    \|\dot{\Delta}_j(b_0,v_0)\|_{\widehat{L^q}}
	\end{split}
	\end{equation}for all $j\in\mathbb{Z}$. On the other hand, let $0\leq \theta\leq 1$ and $r_1\geq0$ satisfy\begin{equation}
	\begin{split}
		\frac{1}{p}=\frac{\theta}{2}+\frac{1-\theta}{q},\quad \frac{1}{r}=\frac{\theta}{r_1}+\frac{1-\theta}{\infty}.
	\end{split}
	\end{equation}
	Then, one may see that\begin{equation}
		\begin{split}
			2\leq q<\infty,\quad \frac{1}{q}+\frac{1}{r_1}\leq\frac{1}{2}.
		\end{split}
	\end{equation}
	It follows from \cite{MR4836084}*{Lemma 3.2} that there exists a positive constant $C=C(q,r_1)$ such that\begin{equation}\label{cgzynld2}
		\begin{split}
			\|\dot{\Delta}_jU_{\Omega,\varepsilon}(t)(b_0,v_0)^{\top}\|_{L^{r_1}(0,T;L^q)}\leq C2^{3(\frac{1}{2}-\frac{1}{q})j}|\Omega|^{-\frac{1}{r_1}}\|\dot{\Delta}_j({b}_0,{v}_0)\|_{L^2}
		\end{split}
	\end{equation}for all $j$ with $2^{j}>|\Omega|\varepsilon$. Therefore, applying the complex interpolation for \eqref{cgzynld1} and \eqref{cgzynld2} yields\begin{equation}\label{cmxw22}
	\begin{split}
		\|\dot{\Delta}_jU_{\Omega,\varepsilon}(t)(b_0,v_0)^{\top}\|_{L^r(0,T;L^q)}\leq C2^{3(\frac{1}{p}-\frac{1}{q})j}|\Omega|^{-\frac{1}{r}}\|\dot{\Delta}_j({b}_0,{v}_0)\|_{\widehat{L^p}}
	\end{split}
	\end{equation}for all $j$ with $2^{j}>|\Omega|\varepsilon$. Thus, multiplying both sides of \eqref{cmxw22} by $2^{(\frac{3}{q}+s)j}$ and then taking $\ell^\sigma$-norm over $\{j\in\mathbb{Z};\,2^j>|\Omega|\varepsilon\}$, we obtain the following Strichartz estimates for the initial data in Fourier--Besov spaces:
    \begin{equation}\label{stries}
	\begin{split}
		\|U_{\Omega,\varepsilon}(t)(b_0,v_0)^{\top}\|_{\widetilde{L^{r}}(0,T;\dot{B}^{\frac{3}{q}+s}_{q,\sigma})}^{h;|\Omega|\varepsilon}&\leq C|\Omega|^{-\frac{1}{r}}\|(b_0,v_0)\|_{\widehat{\dot{B}}{}^{\frac{3}{p}+s}_{p,\sigma}}^{h;|\Omega|\varepsilon}.
	\end{split}
	\end{equation}
	\par 
	Next, we rewrite \eqref{eq:CNSKC-3} into\begin{equation}
		\begin{cases}
			\partial_t a+\dfrac{1}{\varepsilon} \div m=0,\\
				\partial_t m+\Omega ( e_3 \times m )+\dfrac{1}{\varepsilon} \nabla a=f+\mu\Delta m+(\mu+\lambda)\nabla\div m+\kappa\varepsilon\nabla\Delta a,\\
			a(0,x)=a_0(x),\quad m(0,x)=m_0(x).
		\end{cases}
	\end{equation}
Then, by the Duhamel principle, we see that\begin{equation}\begin{split}
\binom{a(t)}{m(t)}&=U_{\Omega,\varepsilon}(t)\binom{a_0}{m_0}\\
&\quad+\int_{0}^{t}U_{\Omega,\varepsilon}(t-\tau)\binom{0}{f+\mu\Delta m+(\mu+\lambda)\nabla\div m+\kappa\varepsilon\nabla\Delta a}(\tau)d\tau.
		\end{split}
	\end{equation}
Hence, resorting to the Strichartz estimates \eqref{stries}, we get
\begin{equation}\label{gjhkcg1}
		\begin{split}
			\|(a,m)\|_{\widetilde{L^{r}}(0,T;\dot{B}^{\frac{3}{q}+s}_{q,\sigma})}^{h;|\Omega|\varepsilon}&\leq C|\Omega|^{-\frac{1}{r}}\left(\|(a_0,m_0)\|_{\widehat{\dot{B}}{}^{\frac{3}{p}+s}_{p,\sigma}}^{h;|\Omega|\varepsilon}+\|f\|_{\widetilde{L^1}(0,T;\widehat{\dot{B}}{}^{\frac{3}{p}+s}_{p,\sigma})}^{h;|\Omega|\varepsilon}\right.\\
			&\quad\left.+\|m\|_{\widetilde{L^1}(0,T;\widehat{\dot{B}}{}^{\frac{3}{p}+s+2}_{p,\sigma})}^{h;|\Omega|\varepsilon}+\|\varepsilon\nabla a\|_{\widetilde{L^1}(0,T;\widehat{\dot{B}}{}^{\frac{3}{p}+s+2}_{p,\sigma})}^{h;|\Omega|\varepsilon}\right).
		\end{split}
	\end{equation}
Recall that \eqref{energyes2} gives us\begin{equation}\label{gjhkcg2}
	\begin{split}
		\|(m,\varepsilon \nabla a)\|_{\widetilde{L^1}(0,T;\widehat{\dot{B}}{}^{\frac{3}{p}+s+2}_{p,\sigma})}^{h;|\Omega|\varepsilon}&\leq C\|(a_0,\varepsilon \nabla a_0,m_0)\|_{\widehat{\dot{B}}{}^{\frac{3}{p}+s}_{p,\sigma}}^{h;|\Omega|\varepsilon}+C\|f\|_{\widetilde{L^1}(0,T;\widehat{\dot{B}}{}^{\frac{3}{p}+s}_{p,\sigma})}^{h;|\Omega|\varepsilon}.
	\end{split}
\end{equation}
Finally, inserting \eqref{gjhkcg2} into \eqref{gjhkcg1} leads to \eqref{lineardispersive2}. This completes the proof.
\end{proof}

\section{Proof of \thmref{thm:large}}\label{sec:pf}	
The goal of this section is to prove \thmref{thm:large}. For the sake of clarity in presentation, we divide the proof into three steps; the first step provides the local well-posedness, the second step establishes the global a priori estimates and we complete the proof in the final step by the continuous argument via the a priori estimates. 
\\
\textbf{First step: Local well-posedness}\par
 We first provide the following local well-posedness result of \eqref{eq:CNSKC-2}.
\begin{prop}\label{localwp}
Let $1 \leq p <3$
and let
$a_0 \in \fB_{p,1}^{\frac{3}{p}-3} ( \mathbb{R}^3 ) \cap \fB_{p,1}^{\frac{3}{p}} ( \mathbb{R}^3 )$
and
$m_0 \in  \fB_{p,1}^{\frac{3}{p}-3} ( \mathbb{R}^3 ) \cap \fB_{p,1}^{\frac{3}{p}-1} ( \mathbb{R}^3 ) $. There exists a positive constant $\varepsilon_0=\varepsilon_0(p,\mu,\kappa,P,a_0,u_0)$ and a positive time $T_0=T_0(p,\mu,\kappa,P,a_0,u_0)$ such that if $
		0<\varepsilon\leq\varepsilon_0$ and $0<|\Omega|\leq 1/\varepsilon$, then the system \eqref{eq:CNSKC-2} admits a unique local solution $(a,m)$ in the class
\begin{align}\label{class123}
 a,\varepsilon\nabla a, m \in \tC ( [0,T_0] ; \fB_{p,1}^{\frac{3}{p}-3} ( \mathbb{R}^3 ) \cap \fB_{p,1}^{\frac{3}{p}-1} ( \mathbb{R}^3 ))
\cap 
L^1( 0,T_0; \fB_{p,1}^{\frac{3}{p}+1} (\mathbb{R}^3)) 
\end{align}
with the estimates\begin{equation}\label{boundnessofa}
    \n{\varepsilon a}_{L^{\infty}(0,T_0;L^{\infty})} \leq 1/2,\quad \|\varepsilon a\|_{L^\infty(0,T_0;\widehat{\dot{B}}{}^{\frac{3}{p}}_{p,1})}\leq c_1,
\end{equation}for some constant $0<c_1\leq 1$.
\end{prop}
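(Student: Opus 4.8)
\emph{Overall strategy.} The plan is to solve \eqref{eq:CNSKC-2} by a Picard scheme built on the linear estimates of \lemref{energylemma}. Under the hypotheses here, \eqref{assumption}-type restriction is just $|\Omega|\varepsilon\leq1$, so every $\Omega$--$\varepsilon$ prefactor in \eqref{energyes1}--\eqref{energyes2} is bounded by $1$ and those estimates are uniform in the admissible range. Let $(a_L,m_L)$ solve the linearized system \eqref{eq:CNSKC-3} with data $(a_0,m_0)$ and zero forcing. Applying \lemref{energylemma} with $r=\infty$ at the regularities $\frac{3}{p}-3$, $\frac{3}{p}-1$, and with $r=1$ to gain four (resp.\ two) derivatives, $(a_L,\varepsilon\nabla a_L,m_L)$ belongs to the class \eqref{class123}, with $\n{(a_L,\varepsilon\nabla a_L,m_L)}_{\tC([0,T];\fB^{\frac{3}{p}-3}_{p,1}\cap\fB^{\frac{3}{p}-1}_{p,1})}\leq CM_0$, $M_0:=\n{(a_0,\varepsilon\nabla a_0,m_0)}_{\fB^{\frac{3}{p}-3}_{p,1}\cap\fB^{\frac{3}{p}-1}_{p,1}}$, and with the decisive property that
\[
\delta(T):=\n{(a_L,\varepsilon\nabla a_L,m_L)}_{\widetilde{L^1}(0,T;\fB^{\frac{3}{p}+1}_{p,1})}+\n{(a_L,m_L)}_{\widetilde{L^2}(0,T;\fB^{\frac{3}{p}}_{p,1})}\longrightarrow0\quad\text{as }T\to0^+,
\]
by dominated convergence, since the corresponding norm over $(0,\infty)$ is finite. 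I would then look for $(a,m)=(a_L,m_L)+(\tilde a,\tilde m)$, where $(\tilde a,\tilde m)$ solves \eqref{eq:CNSKC-3} with zero data and forcing $N_\varepsilon[a_L+\tilde a,m_L+\tilde m]$, as a fixed point of the resulting map $\Phi$ in the complete metric space
\[
\mathcal B_{\eta,T}:=\Bigl\{(\tilde a,\tilde m):\ \n{(\tilde a,\varepsilon\nabla\tilde a,\tilde m)}_{\tC([0,T];\fB^{\frac{3}{p}-3}_{p,1}\cap\fB^{\frac{3}{p}-1}_{p,1})}+\n{(\tilde a,\varepsilon\nabla\tilde a,\tilde m)}_{\widetilde{L^1}(0,T;\fB^{\frac{3}{p}+1}_{p,1})}\leq\eta\Bigr\},
\]
for a small $\eta$ to be fixed.

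\emph{Control of $\varepsilon a$ and the self-mapping.} The density equation remains transport-like: from $\varepsilon a(t)=\varepsilon a_0-\int_0^t\div m\,d\tau$ one gets $\n{\varepsilon a}_{\widetilde{L^\infty}(0,T;\fB^{\frac{3}{p}}_{p,1})}\leq\varepsilon\n{a_0}_{\fB^{\frac{3}{p}}_{p,1}}+\n{m}_{\widetilde{L^1}(0,T;\fB^{\frac{3}{p}+1}_{p,1})}\leq\varepsilon\n{a_0}_{\fB^{\frac{3}{p}}_{p,1}}+\delta(T)+\eta$, so $\n{\varepsilon a}_{\widetilde{L^\infty}(0,T_0;\fB^{\frac{3}{p}}_{p,1})}\leq c_1$ can be made as small as needed by shrinking $\eta$, $\varepsilon_0$, $T_0$. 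This makes \lemref{lemm-comp} applicable to $I(\varepsilon a)$ and $J(\varepsilon a)$, and forces $\n{\varepsilon a}_{L^\infty((0,T_0)\times\mathbb R^3)}\lesssim\n{\varepsilon a}_{L^\infty(0,T_0;\fB^{\frac{3}{p}}_{p,1})}\leq Cc_1\leq\frac12$, hence $\rho=1+\varepsilon a\geq\frac12>0$ and \eqref{boundnessofa}. The core step is then to estimate $N_\varepsilon[a,m]$ in the forcing norm $\n{\cdot}^{\ell;|\Omega|\varepsilon}_{\widetilde{L^1}(0,T;\fB^{\frac{3}{p}-3}_{p,1})}+\n{\cdot}^{h;|\Omega|\varepsilon}_{\widetilde{L^1}(0,T;\fB^{\frac{3}{p}-1}_{p,1})}$ dictated by \lemref{energylemma}, via \lemref{keyproduct1}, \lemref{keyproduct3} and \lemref{lemm-comp}, and to verify that every term carries a genuinely small factor. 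The quasilinear contributions $\Delta(I(\varepsilon a)m)$, $\nabla\div(I(\varepsilon a)m)$ and $\kappa\nabla\bigl((\varepsilon a)(\varepsilon\Delta a)\bigr)$ come with coefficient $\lesssim\n{\varepsilon a}_{L^\infty_T\fB^{\frac{3}{p}}_{p,1}}\leq c_1$, paired with the top-regularity $\fB^{\frac{3}{p}+1}_{p,1}$ norm of $m$ or $\varepsilon\nabla a$ (of size $\lesssim\delta(T)+\eta$); the divergence-form terms other than $\div(m\otimes m)$, and the pressure term after writing $\frac{1}{\varepsilon}J(\varepsilon a)\nabla a=\frac{P''(1)}{2}\nabla(a^2)+(\text{terms of order }\varepsilon)$, carry an explicit power of $\varepsilon$; and the two genuinely $\varepsilon$-free quadratic terms $\div(m\otimes m)$ and $\nabla(a^2)$ are controlled through the endpoint norm $\widetilde{L^2}(0,T;\fB^{\frac{3}{p}}_{p,1})$ supplied by the second/fourth-order smoothing of \lemref{energylemma}: after splitting $m=m_L+\tilde m$, $a=a_L+\tilde a$, each resulting product is of the form $(\text{small})\times(\text{small})$, the small factors being drawn from $c_1$, $\eta$, $\delta(T)$ and powers of $T$. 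Summing up, $\n{N_\varepsilon[a_L+\tilde a,m_L+\tilde m]}_{\mathrm{forcing}}\leq\frac14\eta+C\eta^2+\omega(\varepsilon_0,T_0)$ on $\mathcal B_{\eta,T_0}$ with $\omega\to0$; fixing $\eta$ small, then $\varepsilon_0$, $T_0$ small, \lemref{energylemma} yields $\Phi(\mathcal B_{\eta,T_0})\subset\mathcal B_{\eta,T_0}$.

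\emph{Contraction, conclusion, and the main obstacle.} For $(\tilde a_i,\tilde m_i)\in\mathcal B_{\eta,T_0}$ ($i=1,2$) the difference $\Phi(\tilde a_1,\tilde m_1)-\Phi(\tilde a_2,\tilde m_2)$ solves \eqref{eq:CNSKC-3} with zero data and forcing $N_\varepsilon[a_L+\tilde a_1,m_L+\tilde m_1]-N_\varepsilon[a_L+\tilde a_2,m_L+\tilde m_2]$; by multilinearity and the \emph{Lipschitz} composition bound of \lemref{lemm-comp}, this forcing is controlled, in the same norm, by $\tfrac12$ times the $\mathcal B_{\eta,T_0}$-distance of the two points, so $\Phi$ is a $\tfrac12$-contraction and possesses a unique fixed point. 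This yields the unique local solution $(a,m)=(a_L+\tilde a,m_L+\tilde m)$ in the class \eqref{class123} with the estimates \eqref{boundnessofa}; its time-continuity follows from the usual block-by-block argument, and uniqueness in the whole class (beyond the small ball) from the same difference estimate applied to two full solutions, combined with a short-interval absolute-continuity argument and iteration. The main obstacle is the nonlinear estimate for $N_\varepsilon$ in the \emph{large-data} regime: because the density perturbation need not be small, one cannot treat $a$ as a small quantity and must recognise $\varepsilon a$ as the good small unknown (via the transport identity), so that the quasilinear viscous/Korteweg terms are absorbed through $c_1$ and the remaining terms through $\varepsilon$ or through a positive power of $T$ together with $\delta(T)$; in particular the scaling-critical quadratic terms $\div(m\otimes m)$ and $\nabla(a^2)$ must be estimated via the endpoint $\widetilde{L^2}(0,T;\fB^{\frac{3}{p}}_{p,1})$ regularity rather than the maximal $\widetilde{L^1}(0,T;\fB^{\frac{3}{p}+1}_{p,1})$ one, and the same care has to be propagated through the difference estimates.
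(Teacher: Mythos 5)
Your proposal is correct and follows essentially the same route as the paper's Appendix~\ref{sec:A}: a Banach fixed-point argument on the Duhamel formulation of \eqref{eq:CNSKC-2}, with all smallness drawn from the $|\Omega|\varepsilon\leq 1$-uniform linear estimates of \lemref{energylemma}, the product and composition estimates (\lemref{keyproduct1}, \lemref{lemm-comp}), the time-integrated norms of the free evolution (small as $T\to 0$ by dominated convergence), and the factor $\varepsilon$ attached to $a$. The only differences are bookkeeping: you perturb around the free flow and obtain the smallness of $\n{\varepsilon a}_{L^\infty_T\fB_{p,1}^{3/p}}$ via the transport identity, whereas the paper runs the contraction directly for $(a,m)$ in the small ball $Z^\varepsilon_T$, whose norm includes $\n{\varepsilon\nabla a}_{\widetilde{L^\infty}(0,T;\fB_{p,1}^{3/p-1})}$ and is made small for the free part by a frequency splitting together with the choice of $\varepsilon_0$.
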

The proof of \propref{localwp} is deferred to Appendix \ref{sec:A}. \par 
\noindent\textbf{Second step: A priori estimates}\par
Next, we establish some a priori estimates for the solutions to \eqref{eq:CNSKC-2} in the following two lemmas. 
To simplify the writing, we prepare some notations in the rest of this section.
Let $(a,m)$ be the solution to \eqref{eq:CNSKC-2} on the time interval $[0,T_{\rm max})$ with the initial data $a_0 \in \fB_{p,1}^{\frac{3}{p}-3}(\mathbb{R}^3) \cap \fB_{p,1}^{\frac{3}{p}}(\mathbb{R}^3)$ and $m_0 \in \fB_{p,1}^{\frac{3}{p}-3}(\mathbb{R}^3) \cap \fB_{p,1}^{\frac{3}{p}-1}(\mathbb{R}^3)$, constructed as in the above proposition.
Here, $T_{\rm max} \in (T_0,\infty]$ denotes the maximal existence time.
For the sake of simplicity, we define
\begin{align}
	\mathcal{E}_{p}(T)&:=\|(a,\varepsilon \nabla a,m)\|_{\widetilde{L^{\infty}}(0,T;\widehat{\dot{B}}{}^{\frac{3}{p}-3}_{p,1})\cap \widetilde{L^{\infty}}(0,T;\widehat{\dot{B}}{}^{\frac{3}{p}-1}_{p,1})\cap{L^{1}}(0,T;\widehat{\dot{B}}{}^{\frac{3}{p}+1}_{p,1})},
	\\
	\mathcal{D}_{p,q,r}(T)&:=\n{(a,\varepsilon\nabla a,m)}_{\widetilde{L^r}(0,T;\dB_{q,1}^{\frac{3}{q}-3+\frac{4}{r}})\cap \widetilde{L^r}(0,T;\dB_{q,1}^{\frac{3}{q}-1+\frac{2}{r}})}+\|\varepsilon  \nabla a\|_{\widetilde{L^\infty}(0,T;\widehat{\dot{B}}{}^{\frac{3}{p}-1}_{p,1})},\\
	\mathcal{E}_{p,0}&:=\|(a_0,\varepsilon \nabla a_0,m_0)\|_{\widehat{\dot{B}}{}^{\frac{3}{p}-1}_{p,1}\cap \widehat{\dot{B}}{}^{\frac{3}{p}-3}_{p,1}}
\end{align} 
for $T \in (0,T_{\rm max}]$.
\begin{lemm}\label{xygj1} 
Let $2\leq p<q<3$ and $2<r<\infty$ satisfy\begin{equation}
	\begin{split}
\frac{1}{r}\leq\frac{3}{2q}-\frac{1}{4}.
	\end{split}
\end{equation}
Then, there exists a constant $C_1=C_1(p,q,r,\mu,\kappa,P) \geq 1$ such that for any $\Omega\in\mathbb{R}\setminus\{0\}$ and $0 < \varepsilon \leq \varepsilon_0$ with $|\Omega|\varepsilon\leq 1$, the solution $(a,m)$ to \eqref{eq:CNSKC-2} enjoys 
\begin{equation}\label{firstapriories}
\begin{split}
	\mathcal{E}_p(T)\leq C_1\mathcal{E}_{p,0}+C_1\left(1+\mathcal{D}_{p,q,r}(T)\right)\mathcal{D}_{p,q,r}(T)\mathcal{E}_p(T)
\end{split}
\end{equation}
for all $T \in (0,T_{\rm max}]$.
\end{lemm}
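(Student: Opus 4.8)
The plan is to push the estimate through the linear theory of Section~\ref{sec:lin} with the full nonlinearity as source. Concretely, I apply Lemma~\ref{energylemma} to \eqref{eq:CNSKC-3} with $f=N_\varepsilon[a,m]$: taking $r=\infty$ and $r=1$ in \eqref{energyes1} at $s=\frac3p-3$ and in \eqref{energyes2} at $s=\frac3p-3$ and $s=\frac3p-1$, using $|\Omega|\varepsilon\le1$ to kill the prefactors $(|\Omega|\varepsilon)^{2/r}$ and to compare the low-frequency $\fB^{\frac3p-1}_{p,1}$-norm with the $\fB^{\frac3p-3}_{p,1}$-norm, I reduce the lemma to the single nonlinear estimate
\[
\|N_\varepsilon[a,m]\|_{\widetilde{L^1}(0,T;\fB^{\frac3p-3}_{p,1}\cap\fB^{\frac3p-1}_{p,1})}\le C\bigl(1+\mathcal{D}_{p,q,r}(T)\bigr)\mathcal{D}_{p,q,r}(T)\,\mathcal{E}_p(T).
\]

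To prove this, I estimate each of the seven terms of $N_\varepsilon[a,m]$ at the two target regularities $\frac3p-3$ (low frequencies) and $\frac3p-1$ (high frequencies). The guiding principles are: \emph{(i)} use the divergence/gradient form of every term --- including rewriting $\frac1\varepsilon J(\varepsilon a)\nabla a=\nabla\bigl(\tfrac1{\varepsilon^2}\mathcal J(\varepsilon a)\bigr)$ with $\mathcal J(b):=\int_0^bJ(s)\,ds$ --- so that one derivative can be moved off and the remaining multilinear expression is placed at regularity $\frac3p-2$ or $\frac3p$, never exactly $\frac3p-3$, keeping the exponents strictly inside the admissible ranges of Lemmas~\ref{keyproduct1}--\ref{keyproduct3}; \emph{(ii)} expand the analytic functions as $I(b)=b+\widehat I(b)$, $P'(1+b)-1=P''(1)b+\widehat J(b)$, $\tfrac1{b^2}\mathcal J(b)=\tfrac12P''(1)+\widehat K(b)$ with $\widehat I,\widehat J,\widehat K$ vanishing to high order at $0$, handle the leading linear-in-$a$ pieces as genuine bilinear terms, and control the remainders via Lemma~\ref{lemm-comp} at the safe regularity $\frac3p$ before multiplying; \emph{(iii)} split the time integration by Hölder so that precisely one factor of each product carries the dispersive norm $\mathcal{D}_{p,q,r}(T)$ --- either its $\widetilde{L^r}(0,T;\dB^{\frac3q-1+\frac2r}_{q,1}\cap\dB^{\frac3q-3+\frac4r}_{q,1})$ component, after the embedding $\fB^s_{p,1}\hookrightarrow\dB^{s-3(\frac1p-\frac1q)}_{q,1}$ where needed, or the $\widetilde{L^\infty}(0,T;\fB^{\frac3p-1}_{p,1})$ bound on $\varepsilon\nabla a$ --- while the complementary factors sit in $\mathcal{E}_p(T)$, brought to $\widetilde{L^{r'}}$-type spaces by Chemin--Lerner interpolation between $\widetilde{L^\infty}(0,T;\fB^{\frac3p-3}_{p,1}\cap\fB^{\frac3p-1}_{p,1})$ and $L^1(0,T;\fB^{\frac3p+1}_{p,1})$. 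The quadratic terms $\div(m\otimes m)$ and the leading part of $\frac1\varepsilon J(\varepsilon a)\nabla a$ furnish the $\mathcal{D}_{p,q,r}\mathcal{E}_p$ contribution; all higher-order terms ($\div(I(\varepsilon a)m\otimes m)$, the $I(\varepsilon a)m$-terms with two derivatives, and the $\widehat I,\widehat J,\widehat K$-corrections) carry a second dispersive factor and yield $\mathcal{D}_{p,q,r}^2\mathcal{E}_p$, whence the $(1+\mathcal{D}_{p,q,r})$.

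For the capillary terms $\kappa\varepsilon^2\nabla(a\Delta a)$, $\tfrac{\kappa\varepsilon^2}{2}\nabla(|\nabla a|^2)$ and $-\kappa\varepsilon^2\div(\nabla a\otimes\nabla a)$ one must distribute the two powers of $\varepsilon$ so that each spatial derivative of $a$ of order $\ge2$ is attached to a factor of $\varepsilon$ --- e.g. $\varepsilon^2a\Delta a=(\varepsilon a)\bigl(\Delta(\varepsilon a)\bigr)$ --- and then invoke the extra Korteweg dissipation $\varepsilon\nabla a\in L^1(0,T;\fB^{\frac3p+1}_{p,1})$ (equivalently $\varepsilon a\in L^1(0,T;\fB^{\frac3p+2}_{p,1})$) contained in $\mathcal{E}_p(T)$. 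This is precisely the structural advantage over the non-Korteweg case and is what makes the third index $1$ in the auxiliary space available.

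I expect the main obstacle to be the simultaneous bookkeeping in step \emph{(iii)}: for each bilinear piece the dispersive factor must land at a regularity for which the thresholds $s_i\ge\frac32-\frac3q$ of Lemmas~\ref{keyproduct2}--\ref{keyproduct3} hold --- this forces it onto the $\dB^{\frac3q-1+\frac2r}_{q,1}$ slot of $\mathcal{D}_{p,q,r}$ and is exactly where the hypothesis $\frac1r\le\frac3{2q}-\frac14$ is consumed --- while at the same time the complementary $\mathcal{E}_p$-factors must be available at the matching interpolated regularity and the time exponents must add to $1$. Checking that these three requirements can be met together for every term, and in particular at the low-frequency level, where one must use the low-regularity auxiliary norms $\fB^{\frac3p-3}_{p,1}$ and $\dB^{\frac3q-3+\frac4r}_{q,1}$ rather than the critical ones, is the computational heart of the argument.
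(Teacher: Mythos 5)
Your plan coincides with the paper's proof: apply Lemma~\ref{energylemma} with $f=N_\varepsilon[a,m]$ and $|\Omega|\varepsilon\leq 1$ to reduce \eqref{firstapriories} to the bound $\|N_\varepsilon[a,m]\|_{L^1(0,T;\fB^{\frac3p-3}_{p,1}\cap\fB^{\frac3p-1}_{p,1})}\leq C(1+\mathcal{D}_{p,q,r})\mathcal{D}_{p,q,r}\mathcal{E}_p$, then estimate $N_\varepsilon$ term by term using the divergence structure, the rewriting $\frac1\varepsilon J(\varepsilon a)\nabla a=\frac1{\varepsilon^2}\nabla G(\varepsilon a)$ with a quadratic expansion of $G$, Lemmas~\ref{keyproduct1}--\ref{keyproduct3} and \ref{lemm-comp}, and Hölder in time so that one factor carries $\mathcal{D}_{p,q,r}$ while the complementary $L^{r'}$ factor is interpolated from $\mathcal{E}_p$ via \eqref{needembedding2}--\eqref{needinter2}; you also correctly locate where $\frac1r\leq\frac3{2q}-\frac14$ enters (the threshold $s_i\geq\frac32-\frac3q$ in Lemma~\ref{keyproduct2} at the $\fB^{\frac3p-1}_{p,1}$ level) and where the Korteweg dissipation $\varepsilon\nabla a\in L^1(0,T;\fB^{\frac3p+1}_{p,1})$ is used for the capillary terms. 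This is essentially the same argument as in the paper, differing only in inessential details of how the analytic nonlinearities are expanded.
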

\begin{proof}
First of all, we state that the constant $c_1$ in \eqref{boundnessofa} is small enough that we can freely use the composition estimates stated in \lemref{lemm-comp} throughout this section; see the proof of \propref{localwp}. Applying \lemref{energylemma} to \eqref{eq:CNSKC-2} and noticing that $|\Omega|\varepsilon\leq 1$, we have\begin{align}
\mathcal{E}_p(T)&=\|(a,\varepsilon \nabla a,m)\|_{\widetilde{L^{\infty}}(0,T;\widehat{\dot{B}}{}^{\frac{3}{p}-3}_{p,1})\cap \widetilde{L^{\infty}}(0,T;\widehat{\dot{B}}{}^{\frac{3}{p}-1}_{p,1})\cap{L^{1}}(0,T;\widehat{\dot{B}}{}^{\frac{3}{p}+1}_{p,1})}^{\ell;|\Omega|\varepsilon}\\
&\qquad+\|(a,\varepsilon \nabla a,m)\|_{\widetilde{L^{\infty}}(0,T;\widehat{\dot{B}}{}^{\frac{3}{p}-3}_{p,1})\cap \widetilde{L^{\infty}}(0,T;\widehat{\dot{B}}{}^{\frac{3}{p}-1}_{p,1})\cap{L^{1}}(0,T;\widehat{\dot{B}}{}^{\frac{3}{p}+1}_{p,1})}^{h;|\Omega|\varepsilon}\label{zhuyaoes1}\\
&\leq C\|(a_0,\varepsilon \nabla a_0,m_0)\|_{\widehat{\dot{B}}{}^{\frac{3}{p}-1}_{p,1}\cap \widehat{\dot{B}}{}^{\frac{3}{p}-3}_{p,1}}+C\|N_\varepsilon[a,m]\|_{{L^{1}}(0,T;\widehat{\dot{B}}{}^{\frac{3}{p}-3}_{p,1})\cap{L^{1}}(0,T;\widehat{\dot{B}}{}^{\frac{3}{p}-1}_{p,1})}.
\end{align}
Next, we turn to tackle the nonlinear terms in the right-hand side above. By virtue of the embedding relation:\begin{align}
{\dot{B}}{}^{\frac{3}{2}+s}_{2,1}(\mathbb{R}^3)\hookrightarrow\widehat{\dot{B}}{}^{\frac{3}{p}+s}_{p,1}(\mathbb{R}^3)\quad \text{for all } s\in\mathbb{R},\label{needembedding}
\end{align}and \lemref{keyproduct2} (with $(s,s_1,s_3)=(2,3-\frac{4}{r},3-\frac{4}{r})$ and $(s,s_1,s_3)=(0,1-\frac{2}{r},1-\frac{2}{r})$), we get\begin{align}
	\|\div(m\otimes m)\|_{{L^1(0,T;\widehat{\dot{B}}{}^{\frac{3}{p}-3}_{p,1})}}&\leq C\|m\otimes m\|_{{L^1(0,T;\widehat{\dot{B}}{}^{\frac{3}{p}-2}_{p,1})}}
    \\&\leq C\|m\otimes m\|_{{L^1(0,T;\dot{B}^{-\frac{1}{2}}_{2,1})}}\\
	&\leq C \|m\|_{{L^r(0,T;\dot{B}^{\frac{3}{q}-3+\frac{4}{r}}_{q,1})}}\|m\|_{{L^{r^\prime}(0,T;\dot{B}^{\frac{3}{q}-3+\frac{4}{r^\prime}}_{q,1})}},\\
		\|\div(m\otimes m)\|_{{L^1(0,T;\widehat{\dot{B}}{}^{\frac{3}{p}-1}_{p,1})}}&\leq C\|m\otimes m\|_{{L^1(0,T;\widehat{\dot{B}}{}^{\frac{3}{p}}_{p,1})}}
        \\&\leq C\|m\otimes m\|_{{L^1(0,T;\dot{B}^{\frac{3}{2}}_{2,1})}} \label{nones1}\\
	&\leq C \|m\|_{{L^r(0,T;\dot{B}^{\frac{3}{q}-1+\frac{2}{r}}_{q,1})}}\|m\|_{{L^{r^\prime}(0,T;\dot{B}^{\frac{3}{q}-1+\frac{2}{r^\prime}}_{q,1})}}.
\end{align}
Based on \eqref{nones1}, using \lemref{keyproduct1} and \lemref{lemm-comp} in addition, we obtain\begin{align}
	\|&\div(I(\varepsilon a)m\otimes m)\|_{{L^1(0,T;\widehat{\dot{B}}{}^{\frac{3}{p}-3}_{p,1})}}\\
	&\leq C\|I(\varepsilon a)m\otimes m\|_{{L^1(0,T;\widehat{\dot{B}}{}^{\frac{3}{p}-2}_{p,1})}}
	\\&\leq C\|I(\varepsilon a)\|_{{L^\infty(0,T;\widehat{\dot{B}}{}^{\frac{3}{p}}_{p,1})}}\|m\otimes m\|_{{L^1(0,T;\widehat{\dot{B}}{}^{\frac{3}{p}-2}_{p,1})}}
\\
	&\leq C\|\varepsilon\nabla a\|_{{L^\infty(0,T;\widehat{\dot{B}}{}^{\frac{3}{p}-1}_{p,1})}} \|m\|_{{L^r(0,T;\dot{B}^{\frac{3}{q}-3+\frac{4}{r}}_{q,1})}}\|m\|_{{L^{r^\prime}(0,T;\dot{B}^{\frac{3}{q}-3+\frac{4}{r^\prime}}_{q,1})}},\\
		\|&\div(I(\varepsilon a)m\otimes m)\|_{{L^1(0,T;\widehat{\dot{B}}{}^{\frac{3}{p}-1}_{p,1})}} \label{nones2}\\
	&\leq C\|I(\varepsilon a)m\otimes m\|_{{L^1(0,T;\widehat{\dot{B}}{}^{\frac{3}{p}}_{p,1})}}
	\\&\leq C\|I(\varepsilon a)\|_{{L^\infty(0,T;\widehat{\dot{B}}{}^{\frac{3}{p}}_{p,1})}}\|m\otimes m\|_{{L^1(0,T;\widehat{\dot{B}}{}^{\frac{3}{p}}_{p,1})}}
	\\
	&\leq C\|\varepsilon\nabla a\|_{{L^\infty(0,T;\widehat{\dot{B}}{}^{\frac{3}{p}-1}_{p,1})}} \|m\|_{{L^r(0,T;\dot{B}^{\frac{3}{q}-1+\frac{2}{r}}_{q,1})}}\|m\|_{{L^{r^\prime}(0,T;\dot{B}^{\frac{3}{q}-1+\frac{2}{r^\prime}}_{q,1})}}.
\end{align}
Observe that\begin{equation}
	\begin{split}
		I(b)=b-bI(b).
	\end{split}
\end{equation}
Hence, making use of \lemref{keyproduct1}, \lemref{lemm-comp}, \eqref{needembedding} and \lemref{keyproduct2} (with $(s,s_1,s_3)=(1,2-\frac{4}{r},3-\frac{4}{r})$), we have
\begin{align}
\|&\Delta(I(\varepsilon a)m)\|_{{L^1(0,T;\widehat{\dot{B}}{}^{\frac{3}{p}-3}_{p,1})}}+\|\nabla\div(I(\varepsilon a)m)\|_{{L^1(0,T;\widehat{\dot{B}}{}^{\frac{3}{p}-3}_{p,1})}}\\
&\leq C\|I(\varepsilon a)m\|_{{L^1(0,T;\widehat{\dot{B}}{}^{\frac{3}{p}-1}_{p,1})}}\\
&\leq C\|\varepsilon am\|_{{L^1(0,T;\widehat{\dot{B}}{}^{\frac{3}{p}-1}_{p,1})}}+C\|\varepsilon aI(\varepsilon a)m\|_{{L^1(0,T;\widehat{\dot{B}}{}^{\frac{3}{p}-1}_{p,1})}}\\
&\leq C\|\varepsilon am\|_{{L^1(0,T;\widehat{\dot{B}}{}^{\frac{3}{p}-1}_{p,1})}}+C\|I(\varepsilon a)\|_{{L^\infty(0,T;\widehat{\dot{B}}{}^{\frac{3}{p}}_{p,1})}}\|\varepsilon am\|_{{L^1(0,T;\widehat{\dot{B}}{}^{\frac{3}{p}-1}_{p,1})}}\label{nones3}\\
&\leq C\left(1+\|\varepsilon\nabla a\|_{{L^\infty(0,T;\widehat{\dot{B}}{}^{\frac{3}{p}-1}_{p,1})}}\right)\|\varepsilon am\|_{{L^1(0,T;{\dot{B}}{}^{\frac{1}{2}}_{2,1})}} \\
&\leq C\left(1+\|\varepsilon\nabla a\|_{{L^\infty(0,T;\widehat{\dot{B}}{}^{\frac{3}{p}-1}_{p,1})}}\right) \left(\|\varepsilon\nabla a\|_{{L^r(0,T;\dot{B}^{\frac{3}{q}-3+\frac{4}{r}}_{q,1})}}\|m\|_{{L^{r^\prime}(0,T;\dot{B}^{\frac{3}{q}-3+\frac{4}{r^\prime}}_{q,1})}}\right.\\
&\qquad\qquad\left.+\|m\|_{{L^r(0,T;\dot{B}^{\frac{3}{q}-3+\frac{4}{r}}_{q,1})}}\|\varepsilon\nabla a\|_{{L^{r^\prime}(0,T;\dot{B}^{\frac{3}{q}-3+\frac{4}{r^\prime}}_{q,1})}}\right).
\end{align}
Thanks to \lemref{keyproduct3} (with $(s,s_1,s_2)=(1,0,1-\frac{2}{r})$) and \lemref{lemm-comp}, we also have\begin{align}
\|&\Delta(I(\varepsilon a)m)\|_{{L^1(0,T;\widehat{\dot{B}}{}^{\frac{3}{p}-1}_{p,1})}}+\|\nabla\div(I(\varepsilon a)m)\|_{{L^1(0,T;\widehat{\dot{B}}{}^{\frac{3}{p}-1}_{p,1})}}\\
&\leq C\|I(\varepsilon a)m\|_{{L^1(0,T;\widehat{\dot{B}}{}^{\frac{3}{p}+1}_{p,1})}}\\
&\leq C\|I(\varepsilon a)\|_{{L^\infty(0,T;\widehat{\dot{B}}{}^{\frac{3}{p}}_{p,1})}}\|m\|_{{L^1(0,T;\widehat{\dot{B}}{}^{\frac{3}{p}+1}_{p,1})}} \label{nones4}\\
&\qquad+C\|m\|_{{L^r(0,T;\dot{B}^{\frac{3}{q}-1+\frac{2}{r}}_{q,1})}}\|I(\varepsilon a)\|_{{L^{r^\prime}(0,T;\dot{B}^{\frac{3}{q}+\frac{2}{r^\prime}}_{q,1})}}\\
&\leq C\|\varepsilon\nabla  a\|_{{L^\infty(0,T;\widehat{\dot{B}}{}^{\frac{3}{p}-1}_{p,1})}}\|m\|_{{L^1(0,T;\widehat{\dot{B}}{}^{\frac{3}{p}+1}_{p,1})}}\\
&\qquad+C\|m\|_{{L^r(0,T;\dot{B}^{\frac{3}{q}-1+\frac{2}{r}}_{q,1})}}\|\varepsilon\nabla a\|_{{L^{r^\prime}(0,T;\dot{B}^{\frac{3}{q}-1+\frac{2}{r^\prime}}_{q,1})}}.
\end{align}
Let $G(b):=\int_{0}^{b}J(s)ds$, then we see that\begin{equation}\label{thegfunction}
	\begin{split}
	\frac{1}{\varepsilon}J(\varepsilon a)\nabla a=\frac{1}{\varepsilon^2}\nabla(G(\varepsilon a)).
	\end{split}
\end{equation}
Notice that $G(0)=G^\prime(0)=0$, so there exists a smooth function $H$ satisfying $H(0)=0$ such that\begin{equation}\label{thegfunction1}
	\begin{split}
		G(b)=\frac{G^{\prime\prime}(0)}{2}b^2+H(b)b^2.
	\end{split}
\end{equation}
Hence, resorting to \lemref{keyproduct1}, \lemref{lemm-comp}, \eqref{needembedding}, \lemref{keyproduct2} once again, we have\begin{align}
	\frac{1}{\varepsilon}&\|{J}(\varepsilon a)\nabla a\|_{L^1(0,T;\widehat{\dot{B}}{}^{\frac{3}{p}-3}_{p,1})}\\&\leq 	\frac{C}{\varepsilon^2}\|G(\varepsilon a)\|_{L^1(0,T;\widehat{\dot{B}}{}^{\frac{3}{p}-2}_{p,1})}\\
	&\leq C \left(1+\|H(\varepsilon a)\|_{L^\infty(0,T;\widehat{\dot{B}}{}^{\frac{3}{p}}_{p,1})}\right)\|a^2\|_{L^1(0,T;\widehat{\dot{B}}{}^{\frac{3}{p}-2}_{p,1})}\\
	&\leq C \left(1+\|\varepsilon\nabla a\|_{L^\infty(0,T;\widehat{\dot{B}}{}^{\frac{3}{p}-1}_{p,1})}\right)\|a^2\|_{L^1(0,T;\widehat{\dot{B}}{}^{-\frac{1}{2}}_{2,1})}\\
	&\leq C \left(1+\|\varepsilon\nabla a\|_{L^\infty(0,T;\widehat{\dot{B}}{}^{\frac{3}{p}-1}_{p,1})}\right)\|a\|_{{L^r(0,T;\dot{B}^{\frac{3}{q}-3+\frac{4}{r}}_{q,1})}}\|a\|_{{L^{r^\prime}(0,T;\dot{B}^{\frac{3}{q}-3+\frac{4}{r^\prime}}_{q,1})}},\\
	\frac{1}{\varepsilon}&\|{J}(\varepsilon a)\nabla a\|_{L^1(0,T;\widehat{\dot{B}}{}^{\frac{3}{p}-1}_{p,1})}\label{nones5}\\&\leq 	\frac{C}{\varepsilon^2}\|G(\varepsilon a)\|_{L^1(0,T;\widehat{\dot{B}}{}^{\frac{3}{p}}_{p,1})}\\
	&\leq C \left(1+\|H(\varepsilon a)\|_{L^\infty(0,T;\widehat{\dot{B}}{}^{\frac{3}{p}}_{p,1})}\right)\|a^2\|_{L^1(0,T;\widehat{\dot{B}}{}^{\frac{3}{p}}_{p,1})}\\
	&\leq C \left(1+\|\varepsilon\nabla a\|_{L^\infty(0,T;\widehat{\dot{B}}{}^{\frac{3}{p}-1}_{p,1})}\right)\|a^2\|_{L^1(0,T;\widehat{\dot{B}}{}^{\frac{3}{2}}_{2,1})}\\
	&\leq C \left(1+\|\varepsilon\nabla a\|_{L^\infty(0,T;\widehat{\dot{B}}{}^{\frac{3}{p}-1}_{p,1})}\right)\|a\|_{{L^r(0,T;\dot{B}^{\frac{3}{q}-1+\frac{2}{r}}_{q,1})}}\|a\|_{{L^{r^\prime}(0,T;\dot{B}^{\frac{3}{q}-1+\frac{2}{r^\prime}}_{q,1})}}.
\end{align}
As for the remaining terms, we can use \lemref{keyproduct1} to bound them as\begin{align}
	\|\varepsilon^2\nabla(a\Delta a)\|_{L^1(0,T;\widehat{\dot{B}}{}^{\frac{3}{p}-3}_{p,1})}&\leq C	\|\varepsilon^2a\Delta a\|_{L^1(0,T;\widehat{\dot{B}}{}^{\frac{3}{p}-2}_{p,1})}\\
	&\leq C	\|\varepsilon\nabla  a\|_{L^\infty(0,T;\widehat{\dot{B}}{}^{\frac{3}{p}-1}_{p,1})}\|\varepsilon\nabla  a\|_{L^1(0,T;\widehat{\dot{B}}{}^{\frac{3}{p}-1}_{p,1})},\\
	\|\varepsilon^2\nabla(a\Delta a)\|_{L^1(0,T;\widehat{\dot{B}}{}^{\frac{3}{p}-1}_{p,1})}&\leq C	\|\varepsilon^2a\Delta a\|_{L^1(0,T;\widehat{\dot{B}}{}^{\frac{3}{p}}_{p,1})}\\
	&\leq C	\|\varepsilon\nabla  a\|_{L^\infty(0,T;\widehat{\dot{B}}{}^{\frac{3}{p}-1}_{p,1})}\|\varepsilon\nabla  a\|_{L^1(0,T;\widehat{\dot{B}}{}^{\frac{3}{p}+1}_{p,1})},\\
		\|\varepsilon^2\nabla(|\nabla a|^2)\|_{L^1(0,T;\widehat{\dot{B}}{}^{\frac{3}{p}-3}_{p,1})}&+\|\varepsilon^2\div(\nabla a\otimes \nabla a)\|_{L^1(0,T;\widehat{\dot{B}}{}^{\frac{3}{p}-3}_{p,1})}\\
		&\leq C\varepsilon^2\|\nabla a\otimes \nabla a\|_{L^1(0,T;\widehat{\dot{B}}{}^{\frac{3}{p}-2}_{p,1})} \label{nones6}\\
		&\leq C	\|\varepsilon\nabla  a\|_{L^\infty(0,T;\widehat{\dot{B}}{}^{\frac{3}{p}-1}_{p,1})}\|\varepsilon\nabla  a\|_{L^1(0,T;\widehat{\dot{B}}{}^{\frac{3}{p}-1}_{p,1})},\\
		\|\varepsilon^2\nabla(|\nabla a|^2)\|_{L^1(0,T;\widehat{\dot{B}}{}^{\frac{3}{p}-1}_{p,1})}&+\|\varepsilon^2\div(\nabla a\otimes \nabla a)\|_{L^1(0,T;\widehat{\dot{B}}{}^{\frac{3}{p}-1}_{p,1})}\\
		&\leq C\varepsilon^2\|\nabla a\otimes \nabla a\|_{L^1(0,T;\widehat{\dot{B}}{}^{\frac{3}{p}}_{p,1})}\\
		&\leq C	\|\varepsilon\nabla  a\|_{L^\infty(0,T;\widehat{\dot{B}}{}^{\frac{3}{p}-1}_{p,1})}\|\varepsilon\nabla  a\|_{L^1(0,T;\widehat{\dot{B}}{}^{\frac{3}{p}+1}_{p,1})}.
\end{align}
Moreover, we see that\begin{equation}
	\begin{split}
		\|(a,\varepsilon\nabla a,m)\|_{{L^{r^\prime}(0,T;\dot{B}^{\frac{3}{q}-3+\frac{4}{r^\prime}}_{q,1})}}&\leq C\|(a,\varepsilon\nabla a,m)\|_{{L^{r^\prime}(0,T;\widehat{\dot{B}}{}^{\frac{3}{p}-3+\frac{4}{r^\prime}}_{p,1})}}\\
		&\leq C\|(a,\varepsilon\nabla a,m)\|_{{L^{\infty}(0,T;\widehat{\dot{B}}{}^{\frac{3}{p}-3}_{p,1})}\cap {L^1(0,T;\widehat{\dot{B}}{}^{\frac{3}{p}+1}_{p,1})}},\\
		\|(a,\varepsilon\nabla a,m)\|_{{L^{r^\prime}(0,T;\dot{B}^{\frac{3}{q}-1+\frac{2}{r^\prime}}_{q,1})}}&\leq C\|(a,\varepsilon\nabla a,m)\|_{{L^{r^\prime}(0,T;\widehat{\dot{B}}{}^{\frac{3}{p}-1+\frac{2}{r^\prime}}_{p,1})}}\label{nones7}\\
		&\leq C\|(a,\varepsilon\nabla a,m)\|_{{L^{\infty}(0,T;\widehat{\dot{B}}{}^{\frac{3}{p}-1}_{p,1})}\cap {L^1(0,T;\widehat{\dot{B}}{}^{\frac{3}{p}+1}_{p,1})}},
	\end{split}
\end{equation}which stems from the embedding relation:\begin{align}
\widehat{\dot{B}}{}^{\frac{3}{p}+s}_{p,1}(\mathbb{R}^3)\hookrightarrow{\dot{B}}{}^{\frac{3}{q}+s}_{q,1}(\mathbb{R}^3)\quad \text{for all } s\in\mathbb{R},\label{needembedding2}
\end{align}and the interpolation inequalities:\begin{equation}\label{needinter2}
\begin{split}
	\|f\|_{L^{r^\prime}(0,T;\widehat{\dot{B}}{}^{\frac{3}{p}-3+\frac{4}{r^\prime}}_{p,1})}&\leq \|f\|_{L^{\infty}(0,T;\widehat{\dot{B}}{}^{\frac{3}{p}-3}_{p,1})}^{\frac{1}{r}}\|f\|_{L^{1}(0,T;\widehat{\dot{B}}{}^{\frac{3}{p}+1}_{p,1})}^{1-\frac{1}{r}},\\
	\|f\|_{L^{r^\prime}(0,T;\widehat{\dot{B}}{}^{\frac{3}{p}-1+\frac{2}{r^\prime}}_{p,1})}&\leq \|f\|_{L^{\infty}(0,T;\widehat{\dot{B}}{}^{\frac{3}{p}-1}_{p,1})}^{\frac{1}{r}}\|f\|_{L^{1}(0,T;\widehat{\dot{B}}{}^{\frac{3}{p}+1}_{p,1})}^{1-\frac{1}{r}}.
\end{split}
\end{equation}
Thus, substituting \eqref{nones1}, \eqref{nones2}, \eqref{nones3}, \eqref{nones4}, \eqref{nones5} and \eqref{nones6} into \eqref{zhuyaoes1}, and using \eqref{nones7}, we finally obtain desired estimates \eqref{firstapriories}. This completes the proof.
\end{proof}

\begin{lemm}\label{xygj2}
Let $2\leq p<q<3$ and $2<r<\infty$ satisfy\begin{equation}\label{qrcondition}
	\begin{split}
	\frac{1}{r}\leq\min\left\{\frac{1}{p}-\frac{1}{q},\frac{3}{2q}-\frac{1}{4}\right\}.
	\end{split}
\end{equation}
Then, there exists a constant $C_2=C_2(p,q,r,\mu,\kappa,P) \geq 1$ such that for any $\beta>16$, $\Omega\in\mathbb{R}\setminus\{0\}$ and $0 < \varepsilon \leq \varepsilon_0$ with $|\Omega|\varepsilon\leq 1$, the solution $(a,m)$ to \eqref{eq:CNSKC-2} enjoys 
\begin{align}
	\mathcal{D}_{p,q,r}(T)&\leq C_2\left(|\Omega|^{\frac{2}{r}}\varepsilon^{\frac{2}{r}}+\varepsilon\beta+|\Omega|^{-\frac{1}{r}}\beta^{2-\frac{2}{r}}(1+\varepsilon\beta)\right)\\
	&\qquad\times \Big(\mathcal{E}_{p,0}+(1+\mathcal{D}_{p,q,r}(T))\mathcal{D}_{p,q,r}(T)\mathcal{E}_p(T)\Big)\\
	&\quad+C_2\left(1+\mathcal{D}_{p,q,r}(T)\right)\|(a_0,\varepsilon \nabla a_0,m_0)\|_{\widehat{\dot{B}}{}^{\frac{3}{p}-1}_{p,1}}^{h;\frac{\beta}{16}}\label{secondapriories}\\
	&\quad+C_2\left(1+\mathcal{D}_{p,q,r}(T)\right)\left[\mathcal{D}_{p,q,r}(T)\right]^{\frac{r}{r-1}}\left[\mathcal{E}_{p}(T)\right]^{\frac{r-2}{r-1}}\\
	&\quad+C_2\left(1+\mathcal{D}_{p,q,r}(T)\right)\left[\mathcal{D}_{p,q,r}(T)\right]^2\mathcal{E}_{p}(T)
\end{align}
for all $T \in (0,T_{\rm max}]$.
\end{lemm}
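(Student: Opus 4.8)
The plan is to regard \eqref{eq:CNSKC-2} as the linear system \eqref{eq:CNSKC-3} with forcing $f=N_\varepsilon[a,m]$, to estimate $\mathcal{D}_{p,q,r}(T)$ after splitting every Littlewood--Paley block into the three frequency bands $2^j\leq|\Omega|\varepsilon$, $|\Omega|\varepsilon<2^j\leq\beta$ and $2^j>\beta$ (these are genuinely nested since $|\Omega|\varepsilon\leq1<16<\beta$), and then to absorb the nonlinear forcing $N_\varepsilon[a,m]$ by the product and composition estimates. On the low band I would invoke the energy estimate \eqref{energyes1} of \lemref{energylemma}, combined with the embedding \eqref{needembedding2} which converts the $\dB_{q,1}$-scale appearing in $\mathcal{D}_{p,q,r}(T)$ into the $\fB_{p,1}$-scale; this produces the prefactor $|\Omega|^{\frac{2}{r}}\varepsilon^{\frac{2}{r}}$ in front of $\mathcal{E}_{p,0}$ and of $\|N_\varepsilon[a,m]\|_{L^1(0,T;\fB_{p,1}^{\frac{3}{p}-3}\cap\fB_{p,1}^{\frac{3}{p}-1})}$. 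On the middle band, where $2^j>|\Omega|\varepsilon$, I would apply the dispersive estimate \eqref{lineardispersive2} of \lemref{dispersivelemma} to the $(a,m)$-part, which yields the decay factor $|\Omega|^{-\frac{1}{r}}$; because the data and the forcing are only controlled in $\fB_{p,1}^{\frac{3}{p}-3}\cap\fB_{p,1}^{\frac{3}{p}-1}$, while the heat-scaled regularities $\frac{3}{q}-3+\frac{4}{r}$, $\frac{3}{q}-1+\frac{2}{r}$ in $\mathcal{D}_{p,q,r}(T)$ are higher, adjusting the regularity on the bounded window $2^j\leq\beta$ costs a power of $\beta$, which we absorb into the loss $\beta^{2-\frac{2}{r}}$; finally, the capillary component $\varepsilon\nabla a$, which \eqref{lineardispersive2} does not see, is handled by trading the gradient for a factor $\beta$ (whence the term $\varepsilon\beta$) and estimating the remaining norms of $a$ by \eqref{lineardispersive2} for the $\widetilde{L^r}$-part and by the (gain-free) energy estimate \eqref{energyes2} for the $\widetilde{L^\infty}$-part.

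On the high band I would use \eqref{lineardispersive2} for the $(a,m)$-part at the lower heat-scaled index $\frac{3}{q}-3+\frac{4}{r}$ (which closes against the $\fB_{p,1}^{\frac{3}{p}-1}$-norm of the data and of $N_\varepsilon[a,m]$ truncated to $2^j>\beta$, with a gain in $\beta$), together with the energy estimate \eqref{energyes2} for the full $(a,\varepsilon\nabla a,m)$ at the index $\frac{3}{q}-1+\frac{2}{r}$; the data contribution here is exactly the high-frequency tail $\|(a_0,\varepsilon\nabla a_0,m_0)\|^{h;\beta}_{\fB_{p,1}^{\frac{3}{p}-1}}$, which requires no smallness in $\Omega$ or $\varepsilon$ since it can be made arbitrarily small by enlarging $\beta$ in the final step. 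Collecting the three bands yields the first group of terms in \eqref{secondapriories}, namely the linear prefactor $|\Omega|^{\frac{2}{r}}\varepsilon^{\frac{2}{r}}+\varepsilon\beta+|\Omega|^{-\frac{1}{r}}\beta^{2-\frac{2}{r}}(1+\varepsilon\beta)$ multiplying $\mathcal{E}_{p,0}$ plus the nonlinear part.

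It then remains to estimate $N_\varepsilon[a,m]$ in the norms prescribed above. Its $L^1(0,T;\fB_{p,1}^{\frac{3}{p}-3}\cap\fB_{p,1}^{\frac{3}{p}-1})$-norm is controlled exactly as in the proof of \lemref{xygj1}, through \lemref{keyproduct1}, \lemref{keyproduct2}, \lemref{lemm-comp}, the embedding \eqref{needembedding} and the time-interpolation \eqref{needinter2}; this reproduces the bound $C(1+\mathcal{D}_{p,q,r}(T))\mathcal{D}_{p,q,r}(T)\mathcal{E}_p(T)$, which is precisely why the linear prefactor above multiplies $\mathcal{E}_{p,0}+(1+\mathcal{D}_{p,q,r}(T))\mathcal{D}_{p,q,r}(T)\mathcal{E}_p(T)$. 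For the high-frequency ($2^j>\beta$) part of $N_\varepsilon[a,m]$ feeding the energy and dispersive estimates I would instead use the refined product estimate \eqref{product32} of \lemref{keyproduct3}, which retains a truncated factor $\|\cdot\|^{h;\frac{\beta}{16}}$; the high-frequency solution norms so generated are re-absorbed through \eqref{energyes2} with cut-off $\frac{\beta}{16}$, and this is what gives rise to the term $C_2(1+\mathcal{D}_{p,q,r}(T))\|(a_0,\varepsilon\nabla a_0,m_0)\|^{h;\frac{\beta}{16}}_{\fB_{p,1}^{\frac{3}{p}-1}}$. Lastly, the quadratic nonlinearities in which both factors must be placed in the Strichartz scale — one factor bounded directly by $\mathcal{D}_{p,q,r}(T)$ and the other by a time-interpolation between the $\widetilde{L^r}$- or $\widetilde{L^\infty}$-Strichartz norms and the $L^1$-maximal-regularity norm contained in $\mathcal{E}_p(T)$, in the spirit of \eqref{needinter2} — produce the superlinear remainders $[\mathcal{D}_{p,q,r}(T)]^{\frac{r}{r-1}}[\mathcal{E}_p(T)]^{\frac{r-2}{r-1}}$ and $[\mathcal{D}_{p,q,r}(T)]^2\mathcal{E}_p(T)$, each carrying a factor $1+\mathcal{D}_{p,q,r}(T)$ coming from the coefficients $I(\varepsilon a)$, $J(\varepsilon a)$, $H(\varepsilon a)$ estimated by \lemref{lemm-comp}.

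The hard part is the bookkeeping: one has to keep precise track of the powers of $|\Omega|$, $\varepsilon$ and $\beta$ produced by each of the three frequency bands while at the same time propagating the capillary-enhanced regularity $\varepsilon\nabla a\in\fB_{p,1}^{\frac{3}{p}-1}$, which is invisible to the dispersive estimate and must be recovered partly from \eqref{energyes2} and partly via the $\varepsilon\beta$ device on the finite window; and one must organise the high-frequency nonlinear estimates (the splitting of $N_\varepsilon[a,m]$ at level $\beta$, the use of \eqref{product32}, and the re-absorption through \eqref{energyes2} with cut-off $\frac{\beta}{16}$) so that every self-referential occurrence of $\mathcal{D}_{p,q,r}(T)$ on the right-hand side is multiplied either by one of the small linear prefactors or by a quantity of degree at least two in $(\mathcal{D}_{p,q,r}(T),\mathcal{E}_p(T))$. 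This is ultimately what makes \eqref{secondapriories}, once combined with \lemref{xygj1} and the scaling hierarchy $1\ll|\Omega|\ll1/\varepsilon$, close in the last step of the proof of \thmref{thm:large}.
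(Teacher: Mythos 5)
Your proposal follows essentially the same route as the paper's proof: the three-band frequency splitting at $|\Omega|\varepsilon$ and $\beta$, the energy estimates of \lemref{energylemma} on the low band, the Strichartz estimate of \lemref{dispersivelemma} on the middle band together with the $\varepsilon\beta$ gradient-trading for $\varepsilon\nabla a$ and the $\beta^{2-\frac{2}{r}}$ regularity adjustment, the high-frequency data tail at cut-off $\beta$, and the nonlinear bookkeeping via the estimates of \lemref{xygj1}, the truncated product estimate \eqref{product32}, re-absorption through \eqref{energyes2} at cut-off $\frac{\beta}{16}$, and the interpolation \eqref{superinter} producing the exponents $\frac{r}{r-1}$, $\frac{r-2}{r-1}$. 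The only (harmless) deviation is that on the band $2^j>\beta$ the paper uses just Bernstein plus the high-frequency energy estimate \eqref{energyes2}, whereas you additionally invoke a $\beta$-truncated variant of the dispersive estimate, which does hold (the estimates are frequency-diagonal) but is not needed there.
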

\begin{proof}
According to embeddings, Berstein inequalities and frequency cut-off, we see that\begin{align}
	\|(a,\varepsilon\nabla a,m)\|_{{\widetilde{L^r}(0,T;\dot{B}^{\frac{3}{q}-3+\frac{4}{r}}_{q,1})}}^{\ell;|\Omega|\varepsilon}&\leq C\|(a,\varepsilon\nabla a,m)\|_{{\widetilde{L^r}(0,T;\widehat{\dot{B}}{}^{\frac{3}{p}-3+\frac{4}{r}}_{p,1})}}^{\ell;|\Omega|\varepsilon},\\
\|\varepsilon\nabla a\|_{{\widetilde{L^r}(0,T;\dot{B}^{\frac{3}{q}-3+\frac{4}{r}}_{q,1})}}^{m;|\Omega|\varepsilon,\beta}&\leq \varepsilon\beta\| a\|_{{\widetilde{L^r}(0,T;\dot{B}^{\frac{3}{q}-3+\frac{4}{r}}_{q,1})}}^{m;|\Omega|\varepsilon,\beta}\\
\|(a,\varepsilon\nabla a,m)\|_{{\widetilde{L^r}(0,T;\dot{B}^{\frac{3}{q}-3+\frac{4}{r}}_{q,1})}}^{h;\beta}&\leq\beta^{-2+\frac{2}{r}}\|(a,\varepsilon\nabla a,m)\|_{{\widetilde{L^r}(0,T;\dot{B}^{\frac{3}{q}-1+\frac{2}{r}}_{q,1})}}^{h;\beta}\\
&\leq C\beta^{-2+\frac{2}{r}}\|(a,\varepsilon\nabla a,m)\|_{{\widetilde{L^r}(0,T;\widehat{\dot{B}}{}^{\frac{3}{p}-1+\frac{2}{r}}_{p,1})}}^{h;\beta}.
\end{align}
Then, we have
\begin{align}
    \|(a,\varepsilon\nabla a,m)\|_{{\widetilde{L^r}(0,T;\dot{B}^{\frac{3}{q}-3+\frac{4}{r}}_{q,1})}}
    &=  
    \|(a,\varepsilon\nabla a,m)\|_{{\widetilde{L^r}(0,T;\dot{B}^{\frac{3}{q}-3+\frac{4}{r}}_{q,1})}}^{\ell;|\Omega|\varepsilon}\\
    &\quad +\|(a,\varepsilon\nabla a,m)\|_{{\widetilde{L^r}(0,T;\dot{B}^{\frac{3}{q}-3+\frac{4}{r}}_{q,1})}}^{m;|\Omega|\varepsilon,\beta}\\
    &\quad +\|(a,\varepsilon\nabla a,m)\|_{{\widetilde{L^r}(0,T;\dot{B}^{\frac{3}{q}-3+\frac{4}{r}}_{q,1})}}^{h;\beta}\label{fdgj1}
    \\
    &\leq C\|(a,\varepsilon\nabla a,m)\|_{{\widetilde{L^r}(0,T;\widehat{\dot{B}}{}^{\frac{3}{p}-3+\frac{4}{r}}_{p,1})}}^{\ell;|\Omega|\varepsilon}\\
    &\quad+C(1+\varepsilon\beta)\|(a,m)\|_{{\widetilde{L^r}(0,T;\dot{B}^{\frac{3}{q}-3+\frac{4}{r}}_{q,1})}}^{m;|\Omega|\varepsilon,\beta}\\
    &\quad+C\beta^{-2+\frac{2}{r}}\|(a,\varepsilon\nabla a,m)\|_{{\widetilde{L^r}(0,T;\widehat{\dot{B}}{}^{\frac{3}{p}-1+\frac{2}{r}}_{p,1})}}^{h;\beta}.
\end{align}
Similarly, we also have\begin{align}
	\|(a,\varepsilon\nabla a,m)\|_{{\widetilde{L^r}(0,T;\dot{B}^{\frac{3}{q}-1+\frac{2}{r}}_{q,1})}}&\leq C\left(|\Omega|\varepsilon\right)^{2-\frac{2}{r}}\|(a,\varepsilon\nabla a,m)\|_{{\widetilde{L^r}(0,T;\widehat{\dot{B}}{}^{\frac{3}{p}-3+\frac{4}{r}}_{p,1})}}^{\ell;|\Omega|\varepsilon}\\
	&\quad+C\beta^{2-\frac{2}{r}}(1+\varepsilon\beta)\|(a,m)\|_{{\widetilde{L^r}(0,T;\dot{B}^{\frac{3}{q}-3+\frac{4}{r}}_{q,1})}}^{m;|\Omega|\varepsilon,\beta}\\
	&\quad+C\|(a,\varepsilon\nabla a,m)\|_{{\widetilde{L^r}(0,T;\widehat{\dot{B}}^{\frac{3}{p}-1+\frac{2}{r}}_{p,1})}}^{h;\beta},\label{fdgj2}\\
		\|\varepsilon\nabla a\|_{{\widetilde{L^\infty}(0,T;\widehat{\dot{B}}{}^{\frac{3}{p}-1}_{p,1})}}\leq C&|\Omega|\varepsilon^2	\| a\|_{{\widetilde{L^\infty}(0,T;\widehat{\dot{B}}{}^{\frac{3}{p}-1}_{p,1})}}^{\ell;|\Omega|\varepsilon}+C\varepsilon\beta	\| a\|_{{\widetilde{L^\infty}(0,T;\widehat{\dot{B}}{}^{\frac{3}{p}-1}_{p,1})}}^{m;|\Omega|\varepsilon,\beta}\\&\quad+C\|\varepsilon\nabla a\|_{{\widetilde{L^\infty}(0,T;\widehat{\dot{B}}{}^{\frac{3}{p}-1}_{p,1})}}^{h;\beta}.
\end{align}
Hence, combining \eqref{fdgj1} and \eqref{fdgj2} implies\begin{align}
    \mathcal{D}_{p,q,r}(T)&\leq C\|(a,\varepsilon\nabla a,m)\|_{{\widetilde{L^r}(0,T;\widehat{\dot{B}}{}^{\frac{3}{p}-3+\frac{4}{r}}_{p,1})}}^{\ell;|\Omega|\varepsilon}\\
    &\quad+C\beta^{2-\frac{2}{r}}(1+\varepsilon\beta)\|(a,m)\|_{{\widetilde{L^r}(0,T;\dot{B}^{\frac{3}{q}-3+\frac{4}{r}}_{q,1})}}^{m;|\Omega|\varepsilon,\beta}   \\
    &\quad+C\|(a,\varepsilon\nabla a,m)\|_{{\widetilde{L^r}(0,T;\widehat{\dot{B}}{}^{\frac{3}{p}-1+\frac{2}{r}}_{p,1})}}^{h;\beta}\label{qidaigood}\\
    &\quad+C\varepsilon\beta\left(\|a\|_{{\widetilde{L^\infty}(0,T;\widehat{\dot{B}}{}^{\frac{3}{p}-1}_{p,1})}}^{\ell;|\Omega|\varepsilon}+\| a\|_{{\widetilde{L^\infty}(0,T;\widehat{\dot{B}}{}^{\frac{3}{p}-1}_{p,1})}}^{m;|\Omega|\varepsilon,\beta}  \right)\\
    &\quad+C\|\varepsilon\nabla a\|_{{\widetilde{L^\infty}(0,T;\widehat{\dot{B}}{}^{\frac{3}{p}-1}_{p,1})}}^{h;\beta}.
\end{align}
On the other hand, applying \lemref{energylemma} gives us\begin{align}
	\|&(a,\varepsilon\nabla a,m)\|_{{\widetilde{L^r}(0,T;\widehat{\dot{B}}{}^{\frac{3}{p}-3+\frac{4}{r}}_{p,1})}}^{\ell;|\Omega|\varepsilon}\\&\leq C|\Omega|^{\frac{2}{r}}\varepsilon^{\frac{2}{r}}\|(a_0,\varepsilon \nabla a_0,m_0)\|_{\widehat{\dot{B}}{}^{\frac{3}{p}-3}_{p,1}}^{\ell;|\Omega|\varepsilon}+C|\Omega|^{\frac{2}{r}}\varepsilon^{\frac{2}{r}}\|N_\varepsilon[a,m]\|_{{L^{1}}(0,T;\widehat{\dot{B}}{}^{\frac{3}{p}-3}_{p,1})}^{\ell;|\Omega|\varepsilon},\\
	\| &a\|_{{\widetilde{L^\infty}(0,T;\widehat{\dot{B}}{}^{\frac{3}{p}-1}_{p,1})}}^{\ell;|\Omega|\varepsilon}\\&\leq C\|(a_0,\varepsilon \nabla a_0,m_0)\|_{\widehat{\dot{B}}{}^{\frac{3}{p}-1}_{p,1}}^{\ell;|\Omega|\varepsilon}+C\|N_\varepsilon[a,m]\|_{{L^{1}}(0,T;\widehat{\dot{B}}{}^{\frac{3}{p}-1}_{p,1})}^{\ell;|\Omega|\varepsilon},\\
    \| &a\|_{{\widetilde{L^\infty}(0,T;\widehat{\dot{B}}{}^{\frac{3}{p}-1}_{p,1})}}^{m;|\Omega|\varepsilon,\beta}  \label{fdgj3}
	\\&\leq C\|(a_0,\varepsilon \nabla a_0,m_0)\|_{\widehat{\dot{B}}{}^{\frac{3}{p}-1}_{p,1}}^{m;|\Omega|\varepsilon,\beta}+C\|N_\varepsilon[a,m]\|_{{L^{1}}(0,T;\widehat{\dot{B}}{}^{\frac{3}{p}-1}_{p,1})}^{m;|\Omega|\varepsilon,\beta},\\
	\|&(a,\varepsilon\nabla a,m)\|_{{\widetilde{L^r}(0,T;\widehat{\dot{B}}{}^{\frac{3}{p}-1+\frac{2}{r}}_{p,1})}}^{h;\beta}+\|\varepsilon\nabla a\|_{{\widetilde{L^\infty}(0,T;\widehat{\dot{B}}{}^{\frac{3}{p}-1}_{p,1})}}^{h;\beta}\\
    &\leq C\|(a_0,\varepsilon \nabla a_0,m_0)\|_{\widehat{\dot{B}}{}^{\frac{3}{p}-1}_{p,1}}^{h;\beta}+C\|N_\varepsilon[a,m]\|_{{L^{1}}(0,T;\widehat{\dot{B}}{}^{\frac{3}{p}-1}_{p,1})}^{h;\beta}.
\end{align}
Besides, resorting to \lemref{dispersivelemma}, we get\begin{equation} \label{fdgj4}
	\begin{split}
			\|(a,m)\|_{{\widetilde{L^r}(0,T;\dot{B}^{\frac{3}{q}-3+\frac{4}{r}}_{q,1})}}^{m;|\Omega|\varepsilon,\beta}&\leq C|\Omega|^{-\frac{1}{r}}\left(\|(a_0,\varepsilon \nabla a_0,m_0)\|_{\widehat{\dot{B}}{}^{\frac{3}{p}-3+\frac{4}{r}}_{p,1}}\right.\\
		&\qquad\left.+\|N_\varepsilon[a,m]\|_{{L^{1}}(0,T;\widehat{\dot{B}}{}^{\frac{3}{p}-3+\frac{4}{r}}_{p,1})}\right)
            \\&\leq C|\Omega|^{-\frac{1}{r}}\left(\|(a_0,\varepsilon \nabla a_0,m_0)\|_{\widehat{\dot{B}}{}^{\frac{3}{p}-3}_{p,1}\cap\widehat{\dot{B}}{}^{\frac{3}{p}-1}_{p,1} }\right.\\
		&\qquad\left.+\|N_\varepsilon[a,m]\|_{{L^{1}}(0,T;\widehat{\dot{B}}{}^{\frac{3}{p}-3}_{p,1})\cap {L^{1}}(0,T;\widehat{\dot{B}}{}^{\frac{3}{p}-1}_{p,1})}\right),
	\end{split}
\end{equation}where we have used the interpolation inequality:\begin{align}
\|f\|_{\widehat{\dot{B}}{}^{\frac{3}{p}-3+\frac{4}{r}}_{p,1}}\leq \|f\|_{\widehat{\dot{B}}{}^{\frac{3}{p}-3}_{p,1}}^{1-\frac{2}{r}}\|f\|_{\widehat{\dot{B}}{}^{\frac{3}{p}-1}_{p,1}}^{\frac{2}{r}}. 
\end{align}
Thus, plugging \eqref{fdgj3} and \eqref{fdgj4} into \eqref{qidaigood}, we can conclude that\begin{align}
\mathcal{D}_{p,q,r}(T)&\leq C\left(|\Omega|^{\frac{2}{r}}\varepsilon^{\frac{2}{r}}+\varepsilon\beta+|\Omega|^{-\frac{1}{r}}\beta^{2-\frac{2}{r}}(1+\varepsilon\beta)\right)\\
&\qquad\times\left(\|(a_0,\varepsilon \nabla a_0,m_0)\|_{\widehat{\dot{B}}{}^{\frac{3}{p}-1}_{p,1}\cap \widehat{\dot{B}}{}^{\frac{3}{p}-3}_{p,1}}\right.\\
&\qquad\qquad\left.+\|N_\varepsilon[a,m]\|_{{L^{1}}(0,T;\widehat{\dot{B}}{}^{\frac{3}{p}-3}_{p,1})\cap{L^{1}}(0,T;\widehat{\dot{B}}{}^{\frac{3}{p}-1}_{p,1})}\right)\label{fqzgle}\\
&\quad+C\|(a_0,\varepsilon \nabla a_0,m_0)\|_{\widehat{\dot{B}}{}^{\frac{3}{p}-1}_{p,1}}^{h;\beta}+C\|N_\varepsilon[a,m]\|_{{L^{1}}(0,T;\widehat{\dot{B}}{}^{\frac{3}{p}-1}_{p,1})}^{h;\beta}.
\end{align}
From the proof of \lemref{xygj1}, we already established that
\begin{equation}\label{supernones}
	\begin{split}
		\|N_\varepsilon[a,m]\|_{{L^{1}}(0,T;\widehat{\dot{B}}{}^{\frac{3}{p}-3}_{p,1})\cap{L^{1}}(0,T;\widehat{\dot{B}}{}^{\frac{3}{p}-1}_{p,1})}\leq C\left(1+\mathcal{D}_{p,q,r}(T)\right)\mathcal{D}_{p,q,r}(T)\mathcal{E}_p(T).
	\end{split}
\end{equation}
Hence, it is left to bound $\|N_\varepsilon[a,m]\|_{{L^{1}}(0,T;\widehat{\dot{B}}{}^{\frac{3}{p}-1}_{p,1})}^{h;\beta}$. Reverting \eqref{nones1}, \eqref{nones2} and \eqref{nones5}, and using the following interpolation inequality:\begin{equation}\label{superinter}
	\begin{split}
		\|f\|_{L^{r^\prime}(0,T;{\dot{B}}{}^{\frac{3}{q}-1+\frac{2}{r^\prime}}_{q,1})}&\leq \|f\|_{L^{r}(0,T;{\dot{B}}{}^{\frac{3}{q}-1+\frac{2}{r}}_{q,1})}^{\frac{1}{r-1}}\|f\|_{L^{1}(0,T;{\dot{B}}^{\frac{3}{q}+1}_{q,1})}^{\frac{r-2}{r-1}}\\&\leq C \|f\|_{L^{r}(0,T;{\dot{B}}{}^{\frac{3}{q}-1+\frac{2}{r}}_{q,1})}^{\frac{1}{r-1}}\|f\|_{L^{1}(0,T;\widehat{\dot{B}}{}^{\frac{3}{p}+1}_{p,1})}^{\frac{r-2}{r-1}},
	\end{split}
\end{equation}we have\begin{align}
&\|\div\left((I(\varepsilon a)-1)m\otimes m\right)\|_{{L^{1}}(0,T;\widehat{\dot{B}}{}^{\frac{3}{p}-1}_{p,1})}^{h;\beta}\\
&\quad\leq C\left(1+\|\varepsilon\nabla a\|_{{L^\infty(0,T;\widehat{\dot{B}}{}^{\frac{3}{p}-1}_{p,1})}}\right) \|m\|_{{L^r(0,T;\dot{B}^{\frac{3}{q}-1+\frac{2}{r}}_{q,1})}}\|m\|_{{L^{r^\prime}(0,T;\dot{B}^{\frac{3}{q}-1+\frac{2}{r^\prime}}_{q,1})}}\\
&\quad\leq C\left(1+\|\varepsilon\nabla a\|_{{L^\infty(0,T;\widehat{\dot{B}}{}^{\frac{3}{p}-1}_{p,1})}}\right) \|m\|_{{L^r(0,T;\dot{B}^{\frac{3}{q}-1+\frac{2}{r}}_{q,1})}}^{\frac{r}{r-1}}\|m\|_{{L^{1}(0,T;\widehat{\dot{B}}{}^{\frac{3}{p}+1}_{p,1})}}^{\frac{r-2}{r-1}},\\
&\frac{1}{\varepsilon}\|{J}(\varepsilon a)\nabla a\|_{L^1(0,T;\widehat{\dot{B}}{}^{\frac{3}{p}-1}_{p,1})}^{h;\beta}  \label{ydjz1}\\
&\quad\leq C \left(1+\|\varepsilon\nabla a\|_{L^\infty(0,T;\widehat{\dot{B}}{}^{\frac{3}{p}-1}_{p,1})}\right)\|a\|_{{L^r(0,T;\dot{B}^{\frac{3}{q}-1+\frac{2}{r}}_{q,1})}}\|a\|_{{L^{r^\prime}(0,T;\dot{B}^{\frac{3}{q}-1+\frac{2}{r^\prime}}_{q,1})}}\\
&\quad\leq C \left(1+\|\varepsilon\nabla a\|_{L^\infty(0,T;\widehat{\dot{B}}{}^{\frac{3}{p}-1}_{p,1})}\right)\|a\|_{{L^r(0,T;\dot{B}^{\frac{3}{q}-1+\frac{2}{r}}_{q,1})}}^{\frac{r}{r-1}}\|a\|_{{L^{1}(0,T;\widehat{\dot{B}}{}^{\frac{3}{p}+1}_{p,1})}}^{\frac{r-2}{r-1}}.
\end{align}
Similar to \eqref{nones4}, using \lemref{keyproduct3}, \lemref{lemm-comp} and \eqref{superinter} leads to\begin{align}
	\|&\Delta(I(\varepsilon a)m)\|_{{L^1(0,T;\widehat{\dot{B}}{}^{\frac{3}{p}-1}_{p,1})}}^{h;\beta}+\|\nabla\div(I(\varepsilon a)m)\|_{{L^1(0,T;\widehat{\dot{B}}{}^{\frac{3}{p}-1}_{p,1})}}^{h;\beta}
    \\ &\leq C\|I(\varepsilon a)m\|_{{L^1(0,T;\widehat{\dot{B}}{}^{\frac{3}{p}+1}_{p,1})}}^{h;\beta}\\
	&\leq C\|I(\varepsilon a)\|_{{L^\infty(0,T;\widehat{\dot{B}}{}^{\frac{3}{p}}_{p,1})}}\|m\|_{{L^1(0,T;\widehat{\dot{B}}{}^{\frac{3}{p}+1}_{p,1})}}^{h;\frac{\beta}{16}}\\
	&\qquad+C\|m\|_{{L^r(0,T;\dot{B}^{\frac{3}{q}-1+\frac{2}{r}}_{q,1})}}\|I(\varepsilon a)\|_{{L^{r^\prime}(0,T;\dot{B}^{\frac{3}{q}+\frac{2}{r^\prime}}_{q,1})}}\label{ydjz2}\\
	&\leq C\|\varepsilon\nabla  a\|_{{L^\infty(0,T;\widehat{\dot{B}}{}^{\frac{3}{p}-1}_{p,1})}}\|m\|_{{L^1(0,T;\widehat{\dot{B}}{}^{\frac{3}{p}+1}_{p,1})}}^{h;\frac{\beta}{16}}\\
	&\qquad+C\|m\|_{{L^r(0,T;\dot{B}^{\frac{3}{q}-1+\frac{2}{r}}_{q,1})}}\|\varepsilon\nabla a\|_{L^{r}(0,T;{\dot{B}}^{\frac{3}{q}-1+\frac{2}{r}}_{q,1})}^{\frac{1}{r-1}}\|\varepsilon\nabla a\|_{L^{1}(0,T;\widehat{\dot{B}}{}^{\frac{3}{p}+1}_{p,1})}^{\frac{r-2}{r-1}}.
\end{align}
Using \lemref{keyproduct3} and \eqref{superinter} once again, we also obtain\begin{align}
	\|&\varepsilon^2\nabla(a\Delta a)\|_{L^1(0,T;\widehat{\dot{B}}{}^{\frac{3}{p}-1}_{p,1})}^{h;\beta}+\|\varepsilon^2\nabla(|\nabla a|^2)\|_{L^1(0,T;\widehat{\dot{B}}{}^{\frac{3}{p}-1}_{p,1})}^{h;\beta}\\
	&+\|\varepsilon^2\div(\nabla a\otimes \nabla a)\|_{L^1(0,T;\widehat{\dot{B}}{}^{\frac{3}{p}-1}_{p,1})}^{h;\beta}\\
	&\leq 	C\varepsilon^2\|a\Delta a\|_{L^1(0,T;\widehat{\dot{B}}{}^{\frac{3}{p}}_{p,1})}^{h;\beta}+C\varepsilon^2\|\nabla a\otimes \nabla a\|_{L^1(0,T;\widehat{\dot{B}}{}^{\frac{3}{p}}_{p,1})}^{h;\beta}\\
	&\leq C\|\varepsilon\nabla  a\|_{{L^\infty(0,T;\widehat{\dot{B}}{}^{\frac{3}{p}-1}_{p,1})}}\|\varepsilon\nabla  a\|_{{L^1(0,T;\widehat{\dot{B}}{}^{\frac{3}{p}+1}_{p,1})}}^{h;\frac{\beta}{16}}\label{ydjz3}\\&\qquad+\|\varepsilon\nabla a\|_{{L^r(0,T;\dot{B}^{\frac{3}{q}-1+\frac{2}{r}}_{q,1})}}\|\varepsilon\nabla a\|_{{L^{r^\prime}(0,T;\dot{B}^{\frac{3}{q}-1+\frac{2}{r^\prime}}_{q,1})}}\\
	&\leq C\|\varepsilon\nabla  a\|_{{L^\infty(0,T;\widehat{\dot{B}}{}^{\frac{3}{p}-1}_{p,1})}}\|\varepsilon\nabla  a\|_{{L^1(0,T;\widehat{\dot{B}}{}^{\frac{3}{p}+1}_{p,1})}}^{h;\frac{\beta}{16}}\\&\qquad+\|\varepsilon\nabla a\|_{{L^r(0,T;\dot{B}^{\frac{3}{q}-1+\frac{2}{r}}_{q,1})}}^{\frac{r}{r-1}}\|\varepsilon\nabla a\|_{{L^{1}(0,T;\widehat{\dot{B}}{}^{\frac{3}{p}+1}_{p,1})}}^{\frac{r-2}{r-1}}.
\end{align}
Therefore, combining \eqref{ydjz1}, \eqref{ydjz2} and \eqref{ydjz3}, we infer that\begin{equation}\label{supernones2}
	\begin{split}
		\|N_\varepsilon[a,m]\|_{{L^{1}}(0,T;\widehat{\dot{B}}{}^{\frac{3}{p}-1}_{p,1})}^{h;\beta}&\leq C\left(1+\mathcal{D}_{p,q,r}(T)\right)\left[\mathcal{D}_{p,q,r}(T)\right]^{\frac{r}{r-1}}\left[\mathcal{E}_{p}(T)\right]^{\frac{r-2}{r-1}}\\
		&\qquad+C\mathcal{D}_{p,q,r}(T)\|(a,\varepsilon\nabla  a,m)\|_{{L^1(0,T;\widehat{\dot{B}}{}^{\frac{3}{p}+1}_{p,1})}}^{h;\frac{\beta}{16}}.
	\end{split}
\end{equation}
In addition, it follows from \eqref{energyes2} and \eqref{supernones} that\begin{align}
	\|&(a,\varepsilon\nabla  a,m)\|_{{L^1(0,T;\widehat{\dot{B}}{}^{\frac{3}{p}+1}_{p,1})}}^{h;\frac{\beta}{16}}\\&\leq C\|(a_0,\varepsilon \nabla a_0,m_0)\|_{\widehat{\dot{B}}{}^{\frac{3}{p}-1}_{p,1}}^{h;\frac{\beta}{16}}+C\|N_\varepsilon[a,m]\|_{L^1(0,T;\widehat{\dot{B}}{}^{\frac{3}{p}-1}_{p,1})}^{h;\frac{\beta}{16}}\label{supernones3}\\
	&\leq C\|(a_0,\varepsilon \nabla a_0,m_0)\|_{\widehat{\dot{B}}{}^{\frac{3}{p}-1}_{p,1}}^{h;\frac{\beta}{16}}+C\left(1+\mathcal{D}_{p,q,r}(T)\right)\mathcal{D}_{p,q,r}(T)\mathcal{E}_p(T).
\end{align}
Finally, inserting \eqref{supernones}, \eqref{supernones2} and \eqref{supernones3} into \eqref{fqzgle} yields the desired estimates \eqref{secondapriories}. This completes the proof.
\end{proof}
\noindent\textbf{Third step: Global existence}\par
Let $2\leq p<3$. Then, we fix a pair $(q,r)\in(p,3)\times (2,\infty)$ satisfying \eqref{qrcondition}. Moreover, we assume that $\Omega \in \mathbb{R} \setminus \{ 0\}$ and $\varepsilon>0$ fulfill\begin{equation}\label{assumption2}
	\begin{split}
		\Omega_0 \leq |\Omega| \leq c_0 \frac{1}{\varepsilon},
	\end{split}
\end{equation}
where $\Omega_0\geq 1$ and $c_0\leq 1$ are positive constants to be determined later.
Hence, due to \propref{localwp}, by taking $c_0\leq\varepsilon_0$, we can construct a unique local solution $(a,m)$ to \eqref{eq:CNSKC-2} in the class \eqref{class123} that satisfies \eqref{boundnessofa}. Denote by $T_{\rm max}$ the maximal existence time of this local solution. To prove \thmref{thm:large}, it suffices to prove that $T_{\rm max}=\infty$.\par 
Next, we define\begin{equation}\label{deft}
	\begin{split}
		T^{*}:=
		\sup
		\left\{
		T \in (0,T_{\rm max})\ ;\ \mathcal{E}_p(T)\leq 2C_1\mathcal{E}_{p,0},
\quad \mathcal{D}_{p,q,r}(T)\leq \delta
		\right\},
	\end{split}
\end{equation}
where the constant $C_1$ is given in \lemref{xygj1} and the small positive constant $\delta\leq 1$ will be determined later. Let $\beta>16$ be a constant to be determined later. Thanks to \lemref{xygj1}, \lemref{xygj2}, \eqref{assumption2} and \eqref{deft}, we get that for any $0<T<T^{*}$,\begin{align}
	\mathcal{E}_p(T)&\leq C_1\mathcal{E}_{p,0}+2C_1^2\left(1+\delta\right)\delta\mathcal{E}_{p,0},\\
	\mathcal{D}_{p,q,r}(T)&\leq C_2\left(c_0^{\frac{2}{r}}+ c_0\Omega_0^{-1}\beta+|\Omega_0|^{-\frac{1}{r}}\beta^{2-\frac{2}{r}}(1+c_0\Omega_0^{-1}\beta)\right)\\
	&\qquad\quad\times \left(\mathcal{E}_{p,0}+2C_1\left(1+\delta\right)\delta\mathcal{E}_{p,0}\right)\\
	&\qquad+C_2(1+\delta)\|(a_0,\varepsilon \nabla a_0,m_0)\|_{\widehat{\dot{B}}{}^{\frac{3}{p}-1}_{p,1}}^{h;\frac{\beta}{16}}\\
	&\qquad+2C_1C_2\left(1+\delta\right)\delta^{\frac{r}{r-1}}\mathcal{E}_{p,0}^{\frac{r-2}{r-1}}+2C_1C_2\left(1+\delta\right)\delta^2 \mathcal{E}_{p,0}.
\end{align}
Now, we first choose $\delta$ sufficiently small to satisfy\begin{align}
	&C_1\left(1+\delta\right)\delta\leq\frac{1}{4},\\
	&2C_1C_2\left(1+\delta\right)\delta^{\frac{1}{r-1}}\mathcal{E}_{p,0}^{\frac{r-2}{r-1}}\leq\frac{1}{8},\\
	&2C_1C_2\left(1+\delta\right)\delta \mathcal{E}_{p,0}\leq\frac{1}{8}.
\end{align}
Then, we set $\beta$ sufficiently large such that\begin{equation}
	\begin{split}
		2C_2\|(a_0,\varepsilon \nabla a_0,m_0)\|_{\widehat{\dot{B}}{}^{\frac{3}{p}-1}_{p,1}}^{h;\frac{\beta}{16}}\leq\frac{\delta}{8}.
	\end{split}
\end{equation}
Furthermore, we take $\Omega_0$ sufficiently large and $c_0$ sufficiently small so that\begin{equation}
	\begin{split}
		5C_1C_2\mathcal{E}_{p,0}\left(c_0^{\frac{2}{r}}+ c_0\Omega_0^{-1}\beta+|\Omega_0|^{-\frac{1}{r}}\beta^{2-\frac{2}{r}}(1+c_0\Omega_0^{-1}\beta)\right)\leq\frac{\delta}{8}.
	\end{split}
\end{equation}
Thus, at this stage, we arrive at that for any $0<T<T^{*}$,\begin{equation}
	\begin{split}
		\mathcal{E}_p(T)\leq \frac{3C_1}{2}\mathcal{E}_{p,0},\quad \mathcal{D}_{p,q,r}(T)\leq \frac{\delta}{2}.
	\end{split}
\end{equation}Based on the estimates above and \eqref{deft}, by using a standard continuity argument, we can conclude that $T_{\rm max} \geq T^*=\infty$. Hence, the proof of \thmref{thm:large} is completed.

\appendix

\section{Proof of \propref{localwp}}\label{sec:A}
We present the proof of \propref{localwp}.
\begin{proof}[Proof of \propref{localwp}]
Let $T>0$. By the Duhamel principle, we rewrite the system \eqref{eq:CNSKC-2} as $(a,m)=\Phi_{\varepsilon}[a,m]$. Here, the map $\Phi_{\varepsilon}$ is defined by\begin{equation}\begin{split}
		\Phi_{\varepsilon}[a,m](t)
        :=
        \mathcal{G}_{\varepsilon}(t)\binom{a_0}{m_0}+\int_{0}^{t}\mathcal{G}_{\varepsilon}(t-\tau)\binom{0}{N_\varepsilon[a,m]}(\tau)d\tau,
	\end{split}
\end{equation}where $\{\mathcal{G}_{\varepsilon}(t)\}_{t>0}$ is the semigroup associated with the linear system\begin{equation}
\begin{cases}
	\partial_t a+\dfrac{1}{\varepsilon} \div m=0,
	\\
		\partial_t m-\mu\Delta m-(\mu+\lambda)\nabla\div m+\Omega ( e_3 \times m )+\dfrac{1}{\varepsilon} \nabla a-\kappa\varepsilon\nabla\Delta a=0.
\end{cases}
\end{equation}Moreover, we define the function space $Z^\varepsilon_T$ by\begin{align}
&Z^\varepsilon_T:=\left\{(a,m)\ ;\ 
\begin{aligned}
(a,\varepsilon\nabla a,m)&\in \tC ( [0,T] ; \fB_{p,1}^{\frac{3}{p}-3} ( \mathbb{R}^3 )\cap \fB_{p,1}^{\frac{3}{p}-1} ( \mathbb{R}^3 ))
 \\&\qquad\cap 
L^1( 0,T; \fB_{p,1}^{\frac{3}{p}+1} (\mathbb{R}^3)) \\
\|(a,m)\|_{Z^\varepsilon_T}&\leq \frac{1}{2C_0}
\end{aligned}\right\},\end{align}where
\begin{align}
&\|(a,m)\|_{Z^\varepsilon_T}:=\| a\|_{L^2(0,T;\widehat{\dot{B}}{}^{\frac{3}{p}-1}_{p,1})\cap L^2(0,T;\widehat{\dot{B}}{}^{\frac{3}{p}}_{p,1})}\\
&\qquad\qquad\qquad+\|\varepsilon\nabla a\|_{{\widetilde{L^\infty}(0,T;\widehat{\dot{B}}{}^{\frac{3}{p}-1}_{p,1})}\cap {L^2(0,T;\widehat{\dot{B}}{}^{\frac{3}{p}-1}_{p,1})}\cap {L^2(0,T;\widehat{\dot{B}}{}^{\frac{3}{p}}_{p,1})}\cap {L^1(0,T;\widehat{\dot{B}}{}^{\frac{3}{p}+1}_{p,1})}}\\
&\qquad\qquad\qquad+\|m\|_{{L^2(0,T;\widehat{\dot{B}}{}^{\frac{3}{p}-1}_{p,1})}\cap {L^2(0,T;\widehat{\dot{B}}{}^{\frac{3}{p}}_{p,1})}\cap {L^1(0,T;\widehat{\dot{B}}{}^{\frac{3}{p}+1}_{p,1})}},
\end{align}and $C_0\geq 1$ is a constant to be determined later. Similarly, we also adopt the following notations:\begin{align}
    &\|(a,m)\|_{Z^\varepsilon_T}^{\ell;|\Omega|\varepsilon}:=\| a\|_{L^2(0,T;\widehat{\dot{B}}{}^{\frac{3}{p}-1}_{p,1})\cap L^2(0,T;\widehat{\dot{B}}{}^{\frac{3}{p}}_{p,1})}^{\ell;|\Omega|\varepsilon}\\
&\qquad\qquad\qquad\qquad+\|\varepsilon\nabla a\|_{{\widetilde{L^\infty}(0,T;\widehat{\dot{B}}{}^{\frac{3}{p}-1}_{p,1})}\cap {L^2(0,T;\widehat{\dot{B}}{}^{\frac{3}{p}-1}_{p,1})}\cap {L^2(0,T;\widehat{\dot{B}}{}^{\frac{3}{p}}_{p,1})}\cap {L^1(0,T;\widehat{\dot{B}}{}^{\frac{3}{p}+1}_{p,1})}}^{\ell;|\Omega|\varepsilon}\\
&\qquad\qquad\qquad\qquad+\|m\|_{{L^2(0,T;\widehat{\dot{B}}{}^{\frac{3}{p}-1}_{p,1})}\cap {L^2(0,T;\widehat{\dot{B}}{}^{\frac{3}{p}}_{p,1})}\cap {L^1(0,T;\widehat{\dot{B}}{}^{\frac{3}{p}+1}_{p,1})}}^{\ell;|\Omega|\varepsilon},\\
&\|(a,m)\|_{Z^\varepsilon_T}^{h;|\Omega|\varepsilon}:=\| a\|_{L^2(0,T;\widehat{\dot{B}}{}^{\frac{3}{p}-1}_{p,1})\cap L^2(0,T;\widehat{\dot{B}}{}^{\frac{3}{p}}_{p,1})}^{h;|\Omega|\varepsilon}\\
&\qquad\qquad\qquad\qquad+\|\varepsilon\nabla a\|_{{\widetilde{L^\infty}(0,T;\widehat{\dot{B}}{}^{\frac{3}{p}-1}_{p,1})}\cap {L^2(0,T;\widehat{\dot{B}}{}^{\frac{3}{p}-1}_{p,1})}\cap {L^2(0,T;\widehat{\dot{B}}{}^{\frac{3}{p}}_{p,1})}\cap {L^1(0,T;\widehat{\dot{B}}{}^{\frac{3}{p}+1}_{p,1})}}^{h;|\Omega|\varepsilon}\\
&\qquad\qquad\qquad\qquad+\|m\|_{{L^2(0,T;\widehat{\dot{B}}{}^{\frac{3}{p}-1}_{p,1})}\cap {L^2(0,T;\widehat{\dot{B}}{}^{\frac{3}{p}}_{p,1})}\cap {L^1(0,T;\widehat{\dot{B}}{}^{\frac{3}{p}+1}_{p,1})}}^{h;|\Omega|\varepsilon}.
\end{align}
Now, we first choose $C_0$ sufficiently large to make $\n{\varepsilon a}_{L^{\infty}(0,T;L^{\infty})} \leq 1/2$ hold for $(a,m)\in Z^\varepsilon_T$ due to the embedding $\widehat{\dot{B}}{}^{\frac{3}{p}}_{p,1}(\mathbb{R}^3)\hookrightarrow L^\infty(\mathbb{R}^3)$, and to enable us to use \lemref{lemm-comp} to bound the composite functions in the subsequent analysis.
Using \lemref{keyproduct1} and \lemref{lemm-comp}, we have\begin{align}
	\|\div(m\otimes m)\|_{{L^1(0,T;\widehat{\dot{B}}{}^{\frac{3}{p}-3}_{p,1})}} &\leq C \|m\otimes m\|_{{L^1(0,T;\widehat{\dot{B}}{}^{\frac{3}{p}-2}_{p,1})}}
    \\&\leq C\|m\|_{{L^2(0,T;\widehat{\dot{B}}{}^{\frac{3}{p}-1}_{p,1})}}^2,\\
	\|\div(m\otimes m)\|_{{L^1(0,T;\widehat{\dot{B}}{}^{\frac{3}{p}-1}_{p,1})}}&\leq C \|m\otimes m\|_{{L^1(0,T;\widehat{\dot{B}}{}^{\frac{3}{p}}_{p,1})}}
    \\&\leq C\|m\|_{{L^2(0,T;\widehat{\dot{B}}{}^{\frac{3}{p}}_{p,1})}}^2,\\
		\|\div(I(\varepsilon a)m\otimes m)\|_{{L^1(0,T;\widehat{\dot{B}}{}^{\frac{3}{p}-3}_{p,1})}}&\leq C \|I(\varepsilon a)m\otimes m\|_{{L^1(0,T;\widehat{\dot{B}}{}^{\frac{3}{p}-2}_{p,1})}}\\
        &\leq C\|I(\varepsilon a)\|_{{L^\infty(0,T;\widehat{\dot{B}}{}^{\frac{3}{p}}_{p,1})}}\|m\otimes m\|_{{L^1(0,T;\widehat{\dot{B}}{}^{\frac{3}{p}-2}_{p,1})}}
        \\&\leq C\|\varepsilon\nabla a\|_{{L^\infty(0,T;\widehat{\dot{B}}{}^{\frac{3}{p}-1}_{p,1})}}\|m\|_{{L^2(0,T;\widehat{\dot{B}}{}^{\frac{3}{p}-1}_{p,1})}}^2,\\
	\|\div(I(\varepsilon a)m\otimes m)\|_{{L^1(0,T;\widehat{\dot{B}}{}^{\frac{3}{p}-1}_{p,1})}}&\leq C\|I(\varepsilon a)m\otimes m\|_{{L^1(0,T;\widehat{\dot{B}}{}^{\frac{3}{p}}_{p,1})}}   \\&\leq C\|I(\varepsilon a)\|_{{L^\infty(0,T;\widehat{\dot{B}}{}^{\frac{3}{p}}_{p,1})}}\|m\otimes m\|_{{L^1(0,T;\widehat{\dot{B}}{}^{\frac{3}{p}}_{p,1})}}
    \\&\leq C\|\varepsilon\nabla a\|_{{L^\infty(0,T;\widehat{\dot{B}}{}^{\frac{3}{p}-1}_{p,1})}}\|m\|_{{L^2(0,T;\widehat{\dot{B}}{}^{\frac{3}{p}}_{p,1})}}^2,\\
	\|\Delta(I(\varepsilon a)m)\|_{{L^1(0,T;\widehat{\dot{B}}{}^{\frac{3}{p}-3}_{p,1})}}&+\|\nabla\div(I(\varepsilon a)m)\|_{{L^1(0,T;\widehat{\dot{B}}{}^{\frac{3}{p}-3}_{p,1})}}
    \\ &\leq C\|I(\varepsilon a)m\|_{{L^1(0,T;\widehat{\dot{B}}{}^{\frac{3}{p}-1}_{p,1})}}
    \\
	&\leq C\|\varepsilon\nabla a\|_{{L^2(0,T;\widehat{\dot{B}}{}^{\frac{3}{p}-1}_{p,1})}}\|m\|_{{L^2(0,T;\widehat{\dot{B}}{}^{\frac{3}{p}-1}_{p,1})}},\\
	\|\Delta(I(\varepsilon a)m)\|_{{L^1(0,T;\widehat{\dot{B}}{}^{\frac{3}{p}-1}_{p,1})}}&+\|\nabla\div(I(\varepsilon a)m)\|_{{L^1(0,T;\widehat{\dot{B}}{}^{\frac{3}{p}-1}_{p,1})}}
    \\&\leq C\|I(\varepsilon a)m\|_{{L^1(0,T;\widehat{\dot{B}}{}^{\frac{3}{p}+1}_{p,1})}}
    \\
	&\leq C\|\varepsilon\nabla a\|_{{L^\infty(0,T;\widehat{\dot{B}}{}^{\frac{3}{p}-1}_{p,1})}}\|m\|_{{L^1(0,T;\widehat{\dot{B}}{}^{\frac{3}{p}+1}_{p,1})}}\\&\qquad+\|m\|_{{L^2(0,T;\widehat{\dot{B}}{}^{\frac{3}{p}}_{p,1})}}\|\varepsilon\nabla a\|_{{L^2(0,T;\widehat{\dot{B}}{}^{\frac{3}{p}}_{p,1})}},\\
	\frac{1}{\varepsilon}\|J(\varepsilon a)\nabla a\|_{{L^1(0,T;\widehat{\dot{B}}{}^{\frac{3}{p}-3}_{p,1})}}
    &\leq \frac{C}{\varepsilon^2}\|G(\varepsilon a)\|_{{L^1(0,T;\widehat{\dot{B}}{}^{\frac{3}{p}-2}_{p,1})}}\\
    &\leq C\|a^2\|_{{L^1(0,T;\widehat{\dot{B}}{}^{\frac{3}{p}-2}_{p,1})}}+C\|H(\varepsilon a)a^2\|_{{L^1(0,T;\widehat{\dot{B}}{}^{\frac{3}{p}-2}_{p,1})}}\\
    &\leq C\left(1+\|H(\varepsilon a)\|_{{L^\infty(0,T;\widehat{\dot{B}}{}^{\frac{3}{p}}_{p,1})}}\right)\|a^2\|_{{L^1(0,T;\widehat{\dot{B}}{}^{\frac{3}{p}-2}_{p,1})}}
    \\&\leq C\left(1+\|\varepsilon\nabla a\|_{{L^\infty(0,T;\widehat{\dot{B}}{}^{\frac{3}{p}-1}_{p,1})}}\right)\|a\|_{{L^{2}(0,T;\widehat{\dot{B}}{}^{\frac{3}{p}-1}_{p,1})}}^2,\\
   &\text{(here we have used \eqref{thegfunction} and \eqref{thegfunction1})}\\
	\frac{1}{\varepsilon}\|J(\varepsilon a)\nabla a\|_{{L^1(0,T;\widehat{\dot{B}}{}^{\frac{3}{p}-1}_{p,1})}}
    &\leq \frac{C}{\varepsilon}\|J(\varepsilon a)\|_{{L^2(0,T;\widehat{\dot{B}}{}^{\frac{3}{p}}_{p,1})}}\|\nabla a\|_{{L^2(0,T;\widehat{\dot{B}}{}^{\frac{3}{p}-1}_{p,1})}}
    \\&\leq\|a\|_{{L^2(0,T;\widehat{\dot{B}}{}^{\frac{3}{p}}_{p,1})}}^2,
    \\
	\varepsilon^2\|\nabla(a\Delta a)\|_{{L^1(0,T;\widehat{\dot{B}}{}^{\frac{3}{p}-3}_{p,1})}}&\leq C\varepsilon^2\|a\Delta a\|_{{L^1(0,T;\widehat{\dot{B}}{}^{\frac{3}{p}-2}_{p,1})}}\\
	&\leq C\|\varepsilon\nabla a\|_{{L^2(0,T;\widehat{\dot{B}}{}^{\frac{3}{p}-1}_{p,1})}}^2,\\
	\varepsilon^2\|\nabla(a\Delta a)\|_{{L^1(0,T;\widehat{\dot{B}}{}^{\frac{3}{p}-1}_{p,1})}}&\leq C\varepsilon^2\|a\Delta a\|_{{L^1(0,T;\widehat{\dot{B}}{}^{\frac{3}{p}}_{p,1})}}\\
	&\leq C\|\varepsilon\nabla a\|_{{L^\infty(0,T;\widehat{\dot{B}}{}^{\frac{3}{p}-1}_{p,1})}}\|\varepsilon\nabla a\|_{{L^1(0,T;\widehat{\dot{B}}{}^{\frac{3}{p}+1}_{p,1})}}\\
	\varepsilon^2\|\nabla(|\nabla a|^2)\|_{{L^1(0,T;\widehat{\dot{B}}{}^{\frac{3}{p}-3}_{p,1})}}
	&+\varepsilon^2\|\div(\nabla a\otimes\nabla a)\|_{{L^1(0,T;\widehat{\dot{B}}{}^{\frac{3}{p}-3}_{p,1})}}\\
&\leq C\varepsilon^2\|\nabla a\otimes\nabla a\|_{{L^1(0,T;\widehat{\dot{B}}{}^{\frac{3}{p}-2}_{p,1})}}
    \\
	&\leq C\|\varepsilon\nabla a\|_{{L^2(0,T;\widehat{\dot{B}}{}^{\frac{3}{p}-1}_{p,1})}}^2,\\
	\varepsilon^2\|\nabla(|\nabla a|^2)\|_{{L^1(0,T;\widehat{\dot{B}}{}^{\frac{3}{p}-1}_{p,1})}}
	&+\varepsilon^2\|\div(\nabla a\otimes\nabla a)\|_{{L^1(0,T;\widehat{\dot{B}}{}^{\frac{3}{p}-1}_{p,1})}}\\
&\leq\varepsilon^2\|\nabla a\otimes\nabla a\|_{{L^1(0,T;\widehat{\dot{B}}{}^{\frac{3}{p}}_{p,1})}}
    \\
	&\leq C\|\varepsilon\nabla a\|_{{L^\infty(0,T;\widehat{\dot{B}}{}^{\frac{3}{p}-1}_{p,1})}}\|\varepsilon\nabla a\|_{{L^1(0,T;\widehat{\dot{B}}{}^{\frac{3}{p}+1}_{p,1})}}.
\end{align}
Then, thanks to \lemref{energylemma}, the fact $|\Omega|\varepsilon\leq 1$ and the nonlinear estimates above, we get\begin{align}
\|\Phi_{\varepsilon}[a,m]\|_{Z^\varepsilon_T}
&\leq \|(\widetilde{a_\varepsilon},\widetilde{m_\varepsilon})\|_{Z^\varepsilon_T}+\left\|\int_{0}^{t}\mathcal{G}_{\varepsilon}(t-\tau)\binom{0}{N_\varepsilon[a,m]}(\tau)d\tau\right\|_{Z^\varepsilon_T}^{\ell;|\Omega|\varepsilon}\\
&\qquad+\left\|\int_{0}^{t}\mathcal{G}_{\varepsilon}(t-\tau)\binom{0}{N_\varepsilon[a,m]}(\tau)d\tau\right\|_{Z^\varepsilon_T}^{h;|\Omega|\varepsilon}
\\&\leq \|(\widetilde{a_\varepsilon},\widetilde{m_\varepsilon})\|_{Z^\varepsilon_T}+C\left\|N_\varepsilon[a,m]\right\|_{{L^1(0,T;\widehat{\dot{B}}{}^{\frac{3}{p}-3}_{p,1})}\cap {L^1(0,T;\widehat{\dot{B}}{}^{\frac{3}{p}-2}_{p,1})}\cap{L^1(0,T;\widehat{\dot{B}}{}^{\frac{3}{p}-1}_{p,1})}}^{\ell;|\Omega|\varepsilon}\\
&\qquad+C\left\|N_\varepsilon[a,m]\right\|_{{L^1(0,T;\widehat{\dot{B}}{}^{\frac{3}{p}-3}_{p,1})}\cap {L^1(0,T;\widehat{\dot{B}}{}^{\frac{3}{p}-2}_{p,1})}\cap{L^1(0,T;\widehat{\dot{B}}{}^{\frac{3}{p}-1}_{p,1})}}^{h;|\Omega|\varepsilon}\\&\leq \|(\widetilde{a_\varepsilon},\widetilde{m_\varepsilon})\|_{Z^\varepsilon_T}+C\left\|N_\varepsilon[a,m]\right\|_{{L^1(0,T;\widehat{\dot{B}}{}^{\frac{3}{p}-3}_{p,1})}{L^1(0,T;\widehat{\dot{B}}{}^{\frac{3}{p}-1}_{p,1})}}\\
&\leq \|(\widetilde{a_\varepsilon},\widetilde{m_\varepsilon})\|_{Z^\varepsilon_T}+C_0\|(a,m)\|_{Z^\varepsilon_T}^2
\end{align}
by taking $C_0$ sufficiently large, where we have set
\begin{align}
\binom{\widetilde{a_\varepsilon}(t)}{\widetilde{m_\varepsilon}(t)}:=\mathcal{G}_{\varepsilon}(t)\binom{a_0}{m_0}.
\end{align}
Using Berstein inequalities and \lemref{energylemma}, and choosing a sufficiently large constant $\alpha\gg1$ such that
\begin{align}
	&
    \|\varepsilon\nabla \widetilde{a_\varepsilon}\|_{{\widetilde{L^\infty}(0,\infty;\widehat{\dot{B}}{}^{\frac{3}{p}-1}_{p,1})}}
	\\
    &\quad 
    =\|\varepsilon\nabla \widetilde{a_\varepsilon}\|_{{\widetilde{L^\infty}(0,\infty;\widehat{\dot{B}}{}^{\frac{3}{p}-1}_{p,1})}}^{\ell;|\Omega|\varepsilon}+\|\varepsilon\nabla \widetilde{a_\varepsilon}\|_{{\widetilde{L^\infty}(0,\infty;\widehat{\dot{B}}{}^{\frac{3}{p}-1}_{p,1})}}^{m;|\Omega|\varepsilon,\alpha}+\|\varepsilon\nabla \widetilde{a_\varepsilon}\|_{{\widetilde{L^\infty}(0,\infty;\widehat{\dot{B}}{}^{\frac{3}{p}-1}_{p,1})}}^{h;\alpha}
	\\
    &\quad 
    \leq \varepsilon\alpha\left(\| \widetilde{a_\varepsilon}\|_{{\widetilde{L^\infty}(0,\infty;\widehat{\dot{B}}{}^{\frac{3}{p}-1}_{p,1})}}^{\ell;|\Omega|\varepsilon}+\| \widetilde{a_\varepsilon}\|_{{\widetilde{L^\infty}(0,\infty;\widehat{\dot{B}}{}^{\frac{3}{p}-1}_{p,1})}}^{m;|\Omega|\varepsilon,\alpha}\right)+\|\varepsilon\nabla \widetilde{a_\varepsilon}\|_{{\widetilde{L^\infty}(0,\infty;\widehat{\dot{B}}{}^{\frac{3}{p}-1}_{p,1})}}^{h;\alpha}
	\\
    &\quad 
    \leq C\varepsilon\alpha\|(a_0,\varepsilon \nabla a_0,m_0)\|_{\widehat{\dot{B}}{}^{\frac{3}{p}-1}_{p,1}}+C\|(a_0,\varepsilon \nabla a_0,m_0)\|_{\widehat{\dot{B}}{}^{\frac{3}{p}-1}_{p,1}}^{h;\alpha}
    \\
    &\quad 
    \leq C\varepsilon\alpha\|(a_0,\varepsilon \nabla a_0,m_0)\|_{\widehat{\dot{B}}{}^{\frac{3}{p}-1}_{p,1}}+\frac{1}{16C_0}.
\end{align}
Hence, there exists a positive constant $\varepsilon_0$ such that\begin{equation}
	\begin{split}
		\|\varepsilon\nabla \widetilde{a_\varepsilon}\|_{{\widetilde{L^\infty}(0,\infty;\widehat{\dot{B}}{}^{\frac{3}{p}-1}_{p,1})}}\leq \frac{1}{8C_0}
	\end{split}
\end{equation}for all $0<\varepsilon\leq\varepsilon_0$. Such $\varepsilon$ is fixed from now on. Reverting to \eqref{hkjsll} and noticing that $|\Omega|\varepsilon\leq 1$, we see that\begin{align}
&\|\dot{\Delta}_j(\widetilde{a_\varepsilon},\varepsilon\nabla \widetilde{a_\varepsilon},\widetilde{m_\varepsilon})\|_{\widehat{L^p}}\\
&\leq Ce^{-\frac{2^{4j}}{C(\Omega^2\varepsilon^2+2^{2j})}t}\|\dot{\Delta}_j(a_0,\varepsilon\nabla a_0,m_0)\|_{\widehat{L^p}}\\
&\leq \begin{cases}
	Ce^{-C^{-1}2^{4j}t}\|\dot{\Delta}_j(a_0,\varepsilon\nabla a_0,m_0)\|_{\widehat{L^p}}\quad&\text{if}\quad 2^j\leq 1,  \\
		Ce^{-C^{-1}2^{2j}t}\|\dot{\Delta}_j(a_0,\varepsilon\nabla a_0,m_0)\|_{\widehat{L^p}}\quad&\text{if}\quad 2^j> 1.
\end{cases}
\end{align}
Then, by virtue of Lebesgue dominated convergence theorem, there exists a positive time $T_0$ independent of $\Omega$ and $\varepsilon$ such that
\begin{align}
	&\| \widetilde{a_\varepsilon}\|_{{L^2(0,T_0;\widehat{\dot{B}}{}^{\frac{3}{p}-1}_{p,1})}\cap{L^{2}(0,T_0;\widehat{\dot{B}}{}^{\frac{3}{p}}_{p,1})}}+\|\varepsilon\nabla \widetilde{a_\varepsilon}\|_{{L^2(0,T_0;\widehat{\dot{B}}{}^{\frac{3}{p}-1}_{p,1})}\cap {L^2(0,T_0;\widehat{\dot{B}}{}^{\frac{3}{p}}_{p,1})}\cap {L^1(0,T_0;\widehat{\dot{B}}{}^{\frac{3}{p}+1}_{p,1})}}\\
	&\qquad+\|\widetilde{m_\varepsilon}\|_{{L^2(0,T_0;\widehat{\dot{B}}{}^{\frac{3}{p}-1}_{p,1})}\cap {L^2(0,T_0;\widehat{\dot{B}}{}^{\frac{3}{p}}_{p,1})}\cap {L^1(0,T_0;\widehat{\dot{B}}{}^{\frac{3}{p}+1}_{p,1})}}\leq \frac{1}{8C_0}.
\end{align}
Hence, we have\begin{align}
\|(\widetilde{a_\varepsilon},\widetilde{m_\varepsilon})\|_{Z^\varepsilon_{T_0}}\leq \frac{1}{4C_0}.
\end{align}
At this stage, we arrive at\begin{align}
\|\Phi_{\varepsilon}[a,m]\|_{Z^\varepsilon_{T_0}}&\leq\|(\widetilde{a_\varepsilon},\widetilde{m_\varepsilon})\|_{Z^\varepsilon_{T_0}}+C_0\|(a,m)\|_{Z^\varepsilon_{T_0}}^2\\
&\leq \frac{1}{4C_0}+C_0\left(\frac{1}{2C_0}\right)^2=\frac{1}{2C_0},
\end{align}which indicates that $\Phi_{\varepsilon}[a,m]\in Z^\varepsilon_{T_0}$ for $(a,m)\in Z^\varepsilon_{T_0}$. In a similar fashion, we also can conclude that there exists a positive constant $C_3$ such that
\begin{align}
\|&\Phi_{\varepsilon}[a_2,m_2]-\Phi_{\varepsilon}[a_1,m_1]\|_{Z^\varepsilon_{T_0}}\\&\leq C_3\left(\|(a_2,m_2)\|_{Z^\varepsilon_{T_0}}+\|(a_1,m_1)\|_{Z^\varepsilon_{T_0}}\right)\|(a_2-a_1,m_2-m_1)\|_{Z^\varepsilon_{T_0}}
\end{align}for all $(a_2,m_2)$, $(a_1,m_1)\in Z^\varepsilon_{T_0}$. Then, taking $C_0$ large enough, it holds that\begin{align}
\|&\Phi_{\varepsilon}[a_2,m_2]-\Phi_{\varepsilon}[a_1,m_1]\|_{Z^\varepsilon_{T_0}}\\
&\leq\frac{C_0}{2}\left(\frac{1}{2C_0}+\frac{1}{2C_0}\right)\|(a_2-a_1,m_2-m_1)\|_{Z^\varepsilon_{T_0}}\\
&=\frac{1}{2}\|(a_2-a_1,m_2-m_1)\|_{Z^\varepsilon_{T_0}}.
\end{align}
Thus, the Banach fixed-point theorem indicates that there exists a unique $(a,m)\in Z^\varepsilon_{T_0}$ such that $(a,m)=\Phi_{\varepsilon}[a,m]$, which completes the proof.
\end{proof}

\ \\

\noindent
{\bf Data availability.} \\
Data sharing not applicable to this article as no datasets were generated or analysed during the current study.

\noindent
{\bf Conflict of interest.} \\
The authors have declared no conflicts of interest.

\noindent
{\bf Acknowledgements.} \\
The first author was supported by Grant-in-Aid for Research Activity Start-up, Grant Number JP23K19011. The second author was supported by Wuyi University Research Foundation, Grant Number BSQD2226.

\bibliographystyle{abbrv} 
\bibliography{ref}

\end{document}